\title{Fronts in dissipative Fermi-Pasta-Ulam-Tsingou chains}
\author{Michael Herrmann} 
\address{Mathematik, Technische Universit\"at Braunschweig,  Germany}
\author{Guillaume James}
\address{Univ. Grenoble Alpes, CNRS, Inria, Grenoble INP, LJK, 38000 Grenoble, France}
\email{guillaume.james@univ-grenoble-alpes.fr}
\author{Karsten Matthies}
\address{Department of Mathematical Sciences, University of Bath, United Kingdom}
\email{k.matthies@bath.ac.uk}
\date{\today}
\DeclareMathOperator{\sinc}{\mathrm{sinc}}
\DeclareMathOperator{\mhIm}{Im}
\newcommand{\iu}{\mathtt{i}}
\newcommand{\mhexp}[1]{{{\mathtt{e}}^{#1}}}
\newcommand{\fspace}[1]{{\mathsf{#1}}}
\newcommand{\fspaceL}{\fspace{L}}
\newcommand{\fspaceH}{\fspace{H}}
\newcommand{\fspaceC}{\fspace{C}}
\newcommand{\ol}[1]{{\overline{#1}}}
\newcommand{\Rset}{{\mathbb{R}}}
\newcommand{\Cset}{{\mathbb{C}}}
\newcommand{\Nset}{{\mathbb{N}}}
\newcommand{\oointerval}[2]{(#1,\,#2)}%
\newcommand{\Do}[1]{{o\at{#1}}}
\newlength{\mhpicDwidth}
\newlength{\mhpicDvsep}
\newlength{\mhpicDhsep}
\newlength{\mhpicPwidth}
\newlength{\mhpicPvsep}
\newlength{\mhpicPhsep}
\newlength{\mhpicWhsep}
\newcommand{\bpair}[2]{{\big({#1},\,{#2}\big)}}
\newcommand{\at}[1]{{\left({#1}\right)}}
\newcommand{\nat}[1]{(#1)}
\newcommand{\bat}[1]{{\big(#1\big)}}
\newcommand{\ul}[1]{\underline{#1}}
\newcommand{\D}{\displaystyle}
\newcommand{\bigpar}{\par\quad\newline\noindent}
\newcommand{\abs}[1]{\left|{#1}\right|}
\newcommand{\babs}[1]{\big|{#1}\big|}
\newcommand{\nn}{\nonumber}
\newcommand{\dint}[1]{\,\mathrm{d}#1}
\newcommand{\La}{{\Lambda}}
\newcommand{\eps}{{\varepsilon}}
\newcommand{\la}{{\lambda}}
\theoremstyle{plain}
\newtheorem{theorem}             {Theorem}[]
\newtheorem{corollary}  [theorem]{Corollary}
\newtheorem{lemma}      [theorem]{Lemma}
\newtheorem{proposition}[theorem]{Proposition}
\theoremstyle{definition}
\newtheorem{definition} [theorem]{Definition}
\newtheorem*{remark*}{Remark}
\begin{document}

\begin{abstract}
    In a dissipative Fermi-Pasta-Ulam-Tsingou chain particles interact with their nearest neighbors through anharmonic potentials and linear dissipative forces. We prove the existence of front solutions connecting two different uniformly compressed (or stretched) states at $\pm \infty$ using an implicit function argument starting at a suitable continuum limit in the case of large damping. A detailed analysis allows us to show monotonicity of waves and to determine sharp exponential decay rates for a wide class of potentials including Hertzian potentials.
\end{abstract}

\maketitle


\section{Introduction}

We consider an infinite chain of particles described by the system
\begin{equation}
\label{fputld}
\ddot{q}_n = \Phi^\prime (q_{n+1}-q_n)- \Phi^\prime (q_{n}-q_{n-1}) + \gamma (\dot{q}_{n+1}-2\, \dot{q}_n+\dot{q}_{n-1}),
\quad n\in \mathbb{Z},
\end{equation}
where $q_n(t)$ denotes the displacement of the $n$th particle from a reference position and the overdot stands for the time derivative.
Particles interact with their nearest neighbors through a smooth anharmonic  convex potential $\Phi$
and linear dissipative forces with  a damping constant $\gamma \geq 0$.

An important classical example is given by the generalized Hertzian potential
\begin{equation}
\label{hpot}
\Phi_{\alpha}(r)=\frac{1}{\alpha+1}\, r_+^{\alpha +1},
\end{equation}
where we note $r_+ = \max{(r,0)}$ and assume $\alpha >1$.
The case $\alpha = 3/2$ is of particular interest because
the elastic contact force between two slightly compressed spherical beads
(or smooth non-conforming surfaces) scales like
$\Phi_{3/2}^\prime (r) = r_+^{3/2}$
when bead centers move closer by a distance $r$
(the vanishing of $\Phi_\alpha^\prime$ on the negative axis corresponding to contact loss).
Therefore, the system (\ref{fputld})-(\ref{hpot}) describes the collisional dynamics of a granular chain
in a quasi-static approximation \cite{nesterenko}.
The form of the linear dissipative term in (\ref{fputld})
is valid when the relative displacements
\begin{equation}
\label{defreldis}
r_n = q_{n+1}-q_n
\end{equation}
are nonnegative,
i.e. no contact breaking occurs.
Linear contact dissipation is a classical simplifying assumption (see e.g. \cite{lee-herrmann}), but
more realistic nonlinear dissipative forces can be considered
(see \cite{kuwabara-kono,luding-et-al,james} and references therein).

The case $\gamma =0$ corresponds to the Fermi-Pasta-Ulam-Tsingou (FPUT)
Hamiltonian lattice \cite{campbell-et-al}. Under quite general conditions on the anharmonic interaction potential $\Phi$,
this model supports solitary wave solutions, i.e. spatially localized traveling waves leaving the chain at rest at infinity.
These solutions take the form
\begin{equation}
\label{twdef}
r_n (t) =r_0(t-n/c),
\end{equation}
where $c \neq 0$ denotes the wave velocity and
$\lim_{t\rightarrow \pm\infty} r_n(t)=\lim_{n\rightarrow \pm\infty} r_n(t)=0$. Existence is shown e.g. in  \cite{FW94,Io00,Her10}.  Asymptotic descriptions and dynamic stability have been established for long-wave KdV type wave in \cite{FPI99,FPII02,FPIII04,FPIV04,Mizumachi2009} and for localized high-energy waves in \cite{FM02,HM15,HM19b}. For further qualitative properties see the review \cite{Vai22} and references therein. Additional results for Hertzian potentials
are described in  \cite{english-pego,stefanov-kevrekidis,james-pelinovsky}. Heteroclinic waves in some multi-well potentials have been studied in \cite{TV05,HZ09,HMSZ13}, where some asymptotic states are then typically periodic solutions.

The present work, for $\gamma >0$, deals with
another class of traveling wave solutions consisting of fronts that
connect two different uniformly compressed (or stretched) states at $\pm \infty$.
We assume more precisely
\begin{equation}
\label{rinfty}
\lim_{n\rightarrow \pm\infty} r_n(t)= r_{\pm}, \quad r_+ \neq r_- .
\end{equation}
A jump condition relating these asymptotic states and the wave velocity
was derived in a number of works
\cite{AP09,HR10,james}
and bears an analogy (through the hyperbolic continuum limit of (\ref{fputld}))
with the Rankine-Hugoniot conditions for hyperbolic systems
\cite{AP09,herrmann-rademacher10a}.
It takes the form
\begin{equation}
\label{v2}
\llbracket{r}\rrbracket\, c^2 = \llbracket{\Phi^\prime}\rrbracket
\end{equation}
with $\llbracket r  \rrbracket = r_{+\infty} - r_{-\infty}$, $\llbracket \Phi^\prime  \rrbracket = \Phi^\prime(r_{+\infty}) - \Phi^\prime(r_{-\infty})$,
so that $\llbracket{r}\rrbracket$ and $\llbracket \Phi^\prime  \rrbracket$ must have the same sign for traveling fronts to exist,
and the wave velocity does not depend on $\gamma$.

In the nondissipative case $\gamma=0$, an additional
necessary condition for the existence of traveling fronts
was derived in \cite{AP09,HR10,james} in terms of the coefficient
\begin{equation}
\label{consint2}
A =
\llbracket{\Phi}\rrbracket  - \llbracket r  \, \rrbracket \frac{\Phi^\prime(r_{-\infty})+\Phi^\prime(r_{+\infty})}{2}
\end{equation}
with $\llbracket \Phi  \rrbracket = \Phi(r_{+\infty}) - \Phi(r_{-\infty})$,
requiring $A=0$ for traveling fronts to exist.
The existence of traveling fronts in FPUT lattices
with appropriate interaction potentials satisfying $A=0$ was established in
\cite{Io00,HR10,H11}.
In contrast, $A$ does not vanish if $\Phi^{(3)}$ does not change sign, implying the
nonexistence of traveling fronts \cite{AP09,HR10,james}.
This nonexistence result applies in particular to the
generalized Hertz potential defined in (\ref{hpot}) in the compressive domain $r\geq 0$.

However, traveling fronts can be generated experimentally,
typically by compressing a granular chain with a piston at constant velocity; see
\cite[section 1.8.7]{nesterenko} and \cite{molinari-daraio}.
A correct modeling of this phenomenon requires to take contact damping into account, i.e.
to assume $\gamma >0$ in (\ref{fputld}).
Compression fronts in dissipative granular chains were studied in \cite{herbold-nesterenko,liang-et-al,james}
through numerical simulations and formal multiscale expansions, both for linear and nonlinear contact dissipation.
Similar continuum limit approximations were applied to other interaction potentials in \cite{duvall-et-al,hietarinta-et-al,AP09}.
These techniques permit in particular the analysis of the transition from oscillatory (underdamped) to monotonic (overdamped) fronts,
depending on the amount of dissipation induced by the model parameters and boundary conditions.
Moreover, it was established in \cite{AP09,james} that the velocity of dissipative fronts has the sign of
$A$, which must be nonzero for dissipative fronts to exist.

In this paper, we present the first rigorous study of traveling waves for damped FPUT chains. We will use an implicit function argument starting at a suitable continuum limit in the case of very large damping. The main technical difficulties arise when showing continuity of the relevant derivatives for a setting that includes Hertzian potentials \eqref{hpot}. We will use tools for exponentially weighted, fractional Sobolev spaces including the Kato-Ponce inequality.  Beyond mere existence we provide quantitative and qualitative properties by showing monotonicity of the waves and sharp exponential decay rates. Monotonicity ensures that the wave remains in the non-flat part of the Hertzian potential \eqref{hpot}. Monotonicity for all potentials will be shown via sharp lower exponential estimates for the tail behavior. The implicit function argument will already use function spaces with exponential decays. We identify two different auxiliary continuum limits for $x\to -\infty$ and for $x\to +\infty$, these will yield the correct decay rates by a careful analysis of their Fourier symbols. Solutions can be shown to have these exact rates by applying a conditional improvement lemma for a finite number of times.

\subsection*{Outline of the paper:}
In Section 2, we formulate the traveling wave equation and express it using convolution operators. The statement of the main results is given for general potentials with regularity assumptions that are also satisfied in the Hertzian case. In Section 3 we collect preliminaries about convolution operators and their Fourier symbols. Section 4 includes the implicit function argument for existence. In Section 5 we study an auxiliary problem for the spatial derivative of the waves, this provides a proof of monotonicity and the sharp exponential rates at $\pm \infty$. Section 6 contains some postponed proofs of earlier results on  asymptotic  estimates of the Fourier symbols.

\subsection*{Acknowledgments}

The authors would like to thank Bj\"orn de Rijk and Guido Schneider for organizing the workshop \lq Spatial Dynamics and related approaches'  at the University of Stuttgart in September 2022, where this work was initiated.  
\section{Problem formulation}

\subsection{Nonlinear integral equation for traveling waves}

Based on equation (\ref{fputld}),
the relative displacements $r(t)=(r_n(t))_{n \in \mathbb{Z}}$ defined in (\ref{defreldis}) satisfy
\begin{equation}
\label{eqevolr}
\ddot{r}=\Delta{\Phi^\prime (r)}+\gamma \Delta{\dot{r}},
\end{equation}
where $\Delta$ refers to the  discrete Laplacian
$(\Delta{r})_n = r_{n+1}-2\, r_n +r_{n-1}$.

In the sequel, we look for traveling wave solutions of (\ref{eqevolr}) with velocity $c$.
We consider a regime of high dissipation $\gamma \gg 1$, which leads us to introduce the
small parameter $\eps =1/\gamma$. Moreover we assume a slowly varying traveling wave
profile by setting
\begin{equation}
\label{twansatz}
r_n (t) = R(x), \quad x=\eps\, (n-c\, t).
\end{equation}
Substituting (\ref{twansatz}) in equation (\ref{eqevolr}) yields the
nonlinear advance-delay differential equation
\begin{align}
\label{TWEqn}
c^2\,\tfrac{\dint^2}{\dint x^2} R=\Delta_\eps\at{\Phi^\prime\at{R}-c\,\tfrac{\dint}{\dint x} R }\, ,
\end{align}
where $0<\eps \ll1$ and
$\Delta_\eps$ denotes the discrete Laplacian with spacing $\eps$ (and prefactor $1/\eps^2$)
$$
[\Delta_\eps {R} ](x)= \frac{1}{\eps^2}\, \big( R(x+\eps )-2\, R(x) + R(x-\eps ) \big).
$$
For a front solution,
boundary conditions read
\begin{equation}
\label{bcfront}
\lim_{x\rightarrow \pm\infty} R(x)= r_{\pm}.
\end{equation}
We assume without loss of generality
\begin{equation}
\label{bchyp}
r_- > r_+
\end{equation}
(if $r_- < r_+$, the above case can be recovered by the change $x \rightarrow -x$ and $c \rightarrow -c$ in (\ref{TWEqn})).
Equation \eqref{TWEqn} can be integrated twice by using the identity
$$
\Delta_\eps {R} = \tfrac{\dint^2}{\dint x^2} (\La_\eps \ast R),
$$
where $\ast$ denotes convolution product and
$\La_\eps$ is the tent map with width $2\eps$ and height $1/\eps$
$$
\La_\eps (x)= \frac{1}{\eps}\, \La_1 \left(\frac{x}{\eps}\right), \quad
\La_1 (x)= \mathrm{max}(1-|x|,0).
$$
This results in the nonlinear integral equation
$$
d+c^2\,R+c\,\La_\eps^\prime\ast R = \La_\eps\ast \Phi^\prime\at{R}\,,
$$
where $d$ is a constant of integration
(no linear term in $x$ occurs since $R(x)$ is bounded on $\mathbb{R}$),
or equivalently
\begin{equation}
\label{inteqtw}
d+c^2\,R+c\,\La_\eps\ast R^\prime = \La_\eps\ast \Phi^\prime\at{R}\, .
\end{equation}
The constants $c,d$ in \eqref{inteqtw} are determined by the boundary conditions \eqref{bcfront}.
Indeed, since $\La_\eps \in \fspaceL^1 (\mathbb{R})$ with $\int_{\mathbb{R}}{\La_\eps\, dx}=1$,
it follows by dominated convergence that for all
$f\in  \fspaceC^0 (\mathbb{R})$ satisfying $\lim_{x\rightarrow\pm\infty}{f(x)}=\ell_{\pm}$ one has
$$
\lim_{x \rightarrow\pm\infty}{(\La_\eps\ast f )}=\ell_{\pm}.
$$
Consequently, for all $R \in  \fspaceC^1 (\mathbb{R})$
satisfying \eqref{bcfront}-\eqref{inteqtw} and $\lim_{x\rightarrow\pm\infty}{R^\prime (x)}=0$, one obtains
by letting $x\rightarrow \pm\infty$ in \eqref{inteqtw}
\begin{equation}
\label{vald}
d   = \Phi^\prime\at{r_{\pm}} - c^2\, r_{\pm}
\end{equation}
and the jump condition \eqref{v2} follows.

Let us now assume $\Phi^\prime \in \fspaceC^1([r_+ , r_-])$ and
\begin{equation}
\label{hypphiprime}
\Phi^\prime
\mbox{ increasing and strictly convex on~} [r_+ , r_-] .
\end{equation}
Assuming $\Phi^\prime$ increasing implies $\llbracket{\Phi^\prime}\rrbracket  / \llbracket{r}\rrbracket >0$ in \eqref{v2},
which is necessary for the existence of traveling fronts.
Moreover, since $\Phi^\prime$ is strictly convex,
its graph lies below the chord through $r_+$ and $r_-$ and thus
the coefficient $A$ in \eqref{consint2} is positive
($A$ is the signed area between the chord and the graph of $\Phi^\prime$).
Under this condition,
it was proved in \cite{AP09,james} that $c>0$ and then it follows from \eqref{v2} that
\begin{equation}
\label{valc}
c = \left( \llbracket{\Phi^\prime}\rrbracket  / \llbracket{r}\rrbracket \right)^{1/2}.
\end{equation}

\subsection{Continuum limit}

Our aim is to analyze front solutions to \eqref{inteqtw} for $\eps \approx 0$,
hence we start by considering the limit $\eps \rightarrow 0$ in \eqref{inteqtw}.
We note that the convolution kernel $\La_\eps$ is an approximation to the identity,
that is, $\lim\limits_{\eps\rightarrow 0}{\La_\eps}=\delta_0$, where
$\delta_0$ denotes the Dirac distribution at $x=0$.
Consequently, the formal continuum limit of \eqref{inteqtw} as $\eps \rightarrow 0$
is the first order ODE
\begin{equation}
\label{contlim}
c\,R^\prime = \Phi^\prime\at{R}- c^2\, R - d
\, .
\end{equation}
This problem corresponds to a continuum limit of \eqref{eqevolr}
for front solutions in the large dissipation limit.

Under assumption \eqref{hypphiprime},
for any choice of $r_->r_+$ there exists a unique choice for $d$, $c$ and a heteroclinic solution $R_0$ to
\eqref{contlim} (unique up to phase shift)
which is strictly decreasing and satisfies \eqref{bcfront}.
Indeed, $r_{\pm}$ are equilibria of \eqref{contlim} if and only if $c^2$ is given by
\eqref{v2} (where
$\llbracket{\Phi^\prime}\rrbracket  / \llbracket{r}\rrbracket >0$ since ${\Phi^\prime}$ is increasing)
and $d=\Phi^\prime\at{r_\pm}- c^2\, r_\pm$.
Then the strict convexity of $\Phi^\prime$ implies that the right hand side of \eqref{contlim} is negative on $(r_+ , r_-)$,
hence there exists a heteroclinic solution satisfying \eqref{bcfront}-\eqref{bchyp} provided $c>0$ is given by \eqref{valc}.
Consequently, the values of $c,d$ are identical for the integral equation \eqref{inteqtw}
and the continuum limit problem \eqref{contlim}.

\subsection{Renormalization}

Consider a solution $r(t)=(r_n(t))_{n \in \mathbb{Z}}$
of \eqref{eqevolr} with boundary conditions \eqref{rinfty}.
We renormalize this solution by defining
\begin{equation}
\label{normsol}
\tilde{r}_n (\tilde{t})=\frac{1}{\llbracket{r}\rrbracket}\, \left(  r_+ - r_n \left( \frac{\tilde{t}}{c} \right)  \right) ,
\quad  c=\left( \frac{\llbracket{\Phi^\prime}\rrbracket }{  \llbracket{r}\rrbracket }\right)^{1/2},
\end{equation}
so that
\begin{equation}
\label{bcnorm}
\lim_{n\rightarrow -\infty} \tilde{r}_n(\tilde{t})= 1, \quad
\lim_{n\rightarrow +\infty} \tilde{r}_n(\tilde{t})= 0.
\end{equation}
Similarly, we introduce the renormalized potential
$\tilde\Phi \in \fspaceC^2([0 , 1])$ defined by
\begin{equation}
\label{normpot}
\tilde{\Phi}(\tilde{r})=\frac{1}{\llbracket{r}\rrbracket \, \llbracket{\Phi^\prime}\rrbracket} \, {\Phi}(r_+ - \llbracket{r}\rrbracket\, \tilde{r})
+\frac{\Phi^\prime (r_+)}{ \llbracket{\Phi^\prime}\rrbracket }\, \tilde{r}.
\end{equation}
Condition \eqref{hypphiprime} for $\Phi^\prime$
on $[r_+,r_-]$ is equivalent to
the same condition on $[0,1]$ for $\tilde{\Phi}^\prime$, and one has in addition
$\tilde{\Phi}^\prime(0)=0$, $\tilde{\Phi}^\prime(1)=1$.
It follows that equation \eqref{eqevolr} is equivalent to the following problem for
$\tilde{r}(t)=(\tilde{r}_n(t))_{n \in \mathbb{Z}}$
\begin{equation}
\label{eqevolnorm}
\ddot{\tilde{r}}=\Delta{\tilde{\Phi}^\prime (\tilde{r})}+\tilde\gamma \Delta{\dot{\tilde{r}}},
\quad
\tilde\gamma = \frac{\gamma}{c}.
\end{equation}
From definition \eqref{normsol} it follows that $r$ is a traveling wave solution to \eqref{eqevolr} with velocity $c$
(i.e. taking the form \eqref{twdef}) if and only if
$\tilde{r}$ is a traveling wave solution to \eqref{eqevolnorm} with unit velocity.

The above renormalization shows that one can assume without loss of generality
$r_-=1$, $r_+ =0$, ${\Phi}^\prime(0)=0$, ${\Phi}^\prime(1)=1$, leading to
$c=1$ and $d=0$ (using \eqref{vald}) in \eqref{inteqtw}.

\subsection{Statement of the main results}

\begin{theorem}
\label{existthm}
Let $\Phi \in \fspaceC^{2,\beta}([0 , 1])$ such that $\beta \in (0,1) $,
$\Phi^\prime$ is increasing and strictly convex on $[0 , 1]$,
with ${\Phi}^\prime(0)=0$, ${\Phi}^\prime(1)=1$.
Consider the following problem for $\eps >0$
\begin{equation}
\label{norminteqtw}
\La_\eps\ast R^\prime + R= \La_\eps\ast \Phi^\prime\at{R}\, ,
\end{equation}
and its formal limit for $\eps \rightarrow  0$
\begin{equation}
\label{normcontlim}
R^\prime +R = \Phi^\prime\at{R}
\, ,
\end{equation}
both equipped with the boundary conditions
\begin{equation}
\label{bctwnorm}
\lim_{x\rightarrow -\infty} R(x)= 1, \quad
\lim_{x\rightarrow +\infty} R(x)= 0
\end{equation}
and the phase condition
\begin{equation}
\label{phase}
R(0)=\frac{1}{2}.
\end{equation}
Then there exists a unique solution $R_0$ to \eqref{normcontlim}-\eqref{bctwnorm}-\eqref{phase}.
Moreover, there exists $\eta >0$ and $\eps_1 >0$ such that
for all $\eps \in (0, \eps_1]$, problem
\eqref{norminteqtw}-\eqref{bctwnorm}-\eqref{phase}
admits a unique solution $R_\eps$ such that $\| R_\eps - R_0  \|_{\fspaceH^1(\mathbb{R})} \leq \eta$. Moreover, 
\begin{equation} \label{H1est}
    R_\eps = R_0  +O\at{\eps} \mbox{ in } \fspaceH^1(\mathbb{R}) \mbox{ as } \eps \rightarrow 0. 
\end{equation}
Furthermore $R_\eps$ is monotone in $x$ and converges exponentially for $x \to \pm \infty$.  
\end{theorem}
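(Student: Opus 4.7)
The plan is to apply an implicit function theorem around the continuum-limit profile $R_0$. I would first settle the existence and uniqueness of $R_0$ by direct ODE analysis: strict convexity of $\Phi^\prime$ on $\ccinterval{0}{1}$ together with $\Phi^\prime(0)=0$, $\Phi^\prime(1)=1$ forces $\Phi^\prime(R)-R<0$ on $\oointerval{0}{1}$, so \eqref{normcontlim} admits a strictly decreasing heteroclinic from $1$ at $-\infty$ to $0$ at $+\infty$, unique up to translation, and the phase normalization \eqref{phase} fixes the shift.

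For the $\eps>0$ problem I would introduce the nonlinear map
\begin{equation*}
F\at{u,\eps}\deq \La_\eps\ast\at{R_0+u}^\prime+\at{R_0+u}-\La_\eps\ast\Phi^\prime\at{R_0+u}
\end{equation*}
on a codimension-one subspace of $\fspaceH^1\at{\Rset}$ that incorporates the phase condition (for example $u(0)=0$). Then $F(0,0)=0$, and the $u$-linearization at the origin is the first-order ODE operator $v\mapsto v^\prime+\at{1-\Phi^{\prime\prime}(R_0)}v$, whose coefficient has asymptotic values $1-\Phi^{\prime\prime}(1)<0$ at $-\infty$ and $1-\Phi^{\prime\prime}(0)>0$ at $+\infty$ by strict convexity of $\Phi^\prime$. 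Its $\fspaceH^1$-kernel is the one-dimensional translation mode spanned by $R_0^\prime$, which is killed by the phase condition since $R_0^\prime(0)\neq 0$, and variation of constants identifies the restricted linearization as an isomorphism onto the $\fspaceL^2$-target. Continuity of $F$ in $\eps$ at $\eps=0$, together with the quantitative bound $\norm{\La_\eps\ast g-g}_{\fspaceH^1}=\DO{\eps}$ on sufficiently smooth $g$, then gives the linear error $\norm{R_\eps-R_0}_{\fspaceH^1}=\DO{\eps}$.

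The principal obstacle is Fr\'echet regularity of the Nemytskii map $u\mapsto\Phi^\prime\at{R_0+u}$ in $\fspaceH^1\at{\Rset}$ when $\Phi\in\fspaceC^{2,\beta}$ has only H\"older-regular second derivative, which is precisely the regime of the Hertzian potential. I would use the one-dimensional embedding $\fspaceH^1\hookrightarrow\fspaceC^{0,1/2}$ to trap the argument uniformly inside $\ccinterval{0}{1}$ and then control compositions and their derivatives in fractional Sobolev norms through Kato-Ponce type commutator estimates, as announced in the introduction. To prepare the later qualitative analysis I would carry out this IFT in exponentially weighted fractional Sobolev spaces from the outset, with weights chosen compatible with the two-sided hyperbolicity of the linearization, so that $R_\eps-R_0$ already inherits some exponential decay at both ends.

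For the remaining qualitative statements I would differentiate the fixed-point equation once in $x$ to obtain the linear convolution equation
\begin{equation*}
\La_\eps\ast S_\eps^\prime+S_\eps=\La_\eps\ast\at{\Phi^{\prime\prime}(R_\eps)\,S_\eps}
\end{equation*}
for $S_\eps\deq R_\eps^\prime$, and treat it as an auxiliary problem. Strict negativity of $S_\eps$ follows by continuity from $R_0^\prime<0$ combined with a comparison argument that rules out sign changes on the tails, which guarantees that $R_\eps$ stays in the non-flat part of the Hertzian potential. For the sharp decay rates at $\pm\infty$ I would freeze $\Phi^{\prime\prime}(R_\eps)$ at its asymptotic values $\Phi^{\prime\prime}(0)$ and $\Phi^{\prime\prime}(1)$, obtaining two constant-coefficient convolution problems with Fourier symbols $\iu k\,\widehat{\La_\eps}(k)+1-\Phi^{\prime\prime}(r_\pm)\,\widehat{\La_\eps}(k)$ whose roots on the imaginary axis yield the admissible exponential rates; these are the \emph{auxiliary continuum limits} for $x\to\pm\infty$ alluded to in the introduction. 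The rough decay from the weighted IFT is finally pushed to the exact exponent by applying the conditional improvement lemma a finite number of times, each iteration tightening the decay exponent towards the next root of the relevant symbol.
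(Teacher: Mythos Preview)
Your plan matches the paper's proof essentially step for step: the paper applies the implicit function theorem to $F(\eps,W)=R_0+W-a_\eps\ast\Phi'(R_0+W)$ (the resolved form of your map, with symbol $\hat a_\eps=\widehat{\La_\eps}/(1+2\pi\iu k\,\widehat{\La_\eps})$) in the exponentially weighted spaces $\fspaceH^1_{\eta_+}\cap\fspaceH^1_{-\eta_-}$ with the phase constraint $W(0)=0$, handling the H\"older regularity of $\Phi''$ via Kato--Ponce exactly as you propose, and then differentiates to study $S_\eps=-R_\eps'$ through the modified kernels $\hat a_\eps^\pm=\pm 1/(\hat a_\eps^{-1}-p_\pm)$ and an iterative rate-improvement lemma as you outline. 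The one place where the paper is more explicit than your sketch is the positivity of $S_\eps$: it is not obtained by a soft comparison but by showing that the nonlocal remainder $F_\eps^\pm$ in the Duhamel representation of \eqref{AP.ODE1}--\eqref{AP.ODE2} is $O(\eps^{\beta/4})$ small in a weighted $\fspaceL^2$-norm, which requires a second Kato--Ponce estimate (their Lemma~\ref{lem:Fepssmall}) on top of the one already used in the IFT.
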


\begin{remark*}

\begin{enumerate}
    \item  We note that the Hertz potentials in \eqref{hpot} with $\alpha \in (1,2)$ satisfy the assumptions with H\"older exponent $\beta= \alpha-1$. Smooth potentials $\Phi \in \fspaceC^3 ([0 , 1])$ with ${\Phi}^\prime(0)=0$, ${\Phi}^\prime(1)=1$ satisfy the assumptions if 
$\Phi^{\prime\prime}$ and $\Phi^{\prime\prime\prime}$ are positive   in $[0 , 1]$.   
\item Formulas for the exponential rates for $x \to \pm \infty$ are given in Proposition \ref{Lem.AP.ModKernels}, see also Theorem \ref{thm:rates}.
\end{enumerate}  
\end{remark*}

\section{Preliminaries}
\subsection{Setting}
In a chain with linear dissipation, the rescaled equation for the distance profile of a
traveling wave takes the form \eqref{norminteqtw} with
$0<\eps=1/\gamma\ll1$.

In the sequel, we use the following variant of the Fourier transform
\begin{align*}
\hat{a}\at{k} =\int\limits_{-\infty}^{+\infty}\exp\at{-2\,\pi\,\iu\,k\,x}\, a\at{x}\dint{x}\,,\qquad a\at{x}=
\int\limits_{-\infty}^{+\infty}\exp\at{+2\,\pi\,\iu\,k\,x}\, \hat{a}\at{k}\dint{k}.
\end{align*}
%
%
%
%
%
%
%
%
Applying the Fourier transform to
problem \eqref{norminteqtw} yields
\begin{align*}
    \widehat{\La_\eps}(k) \widehat{R'}(k) + \hat{R}(k)= \widehat{\La_\eps}(k) \widehat{\Phi^\prime\at{R}} (k)
\end{align*}
with $\widehat{\La_\eps}(k)=\sinc^2\at{\eps\,\pi\,k} $ and  $\widehat{R'}(k)=2\,\pi\,\iu\,k \widehat{R}(k)$, such that  \eqref{norminteqtw} can be written as the fixed point problem
\begin{align}
R=a_\eps\ast\Phi^\prime\at{R}\,,  \label{eq:fixedpt}
\end{align}
where
\begin{align*}
\hat{a}_\eps\at{k} = \frac{\sinc^2\at{\eps\,\pi\,k}}{1+2\,\pi\,\iu\,k\,\sinc^2\at{\eps\,\pi\,k}}\,.
\end{align*}
The pointwise limit of $\widehat{a_\eps}$ yields the operator
\begin{align*}
\hat{a}_0\at{k}=\frac{1}{1+2\,\pi\,\iu\,k\,},
\quad\qquad
a_0\at{x}=
\left\{
\begin{array}{ccc}
\exp\at{-x}&&\text{for $x\geq0$,}\\
0&&\text{for $x<0$.}
\end{array}
\right.
\end{align*}
Then the limiting equation of  \eqref{eq:fixedpt} is equivalent to 
\begin{align}\label{eqn:R0}
  R_0+ R_0' & = \Phi'(R_0).
\end{align}
Imposing the initial condition $R_0(0)= \frac{1}{2}$ this scalar ordinary differential equation has a unique solution. The solution satisfies
\begin{align}\label{eqn:R0bc}
  R_0(-\infty)=1, \quad R_0(\infty)=0
\end{align}
as $\Phi^\prime$ is increasing and strictly convex on $[0 , 1]$,
with ${\Phi}^\prime(0)=0$, ${\Phi}^\prime(1)=1$. 

A major part of the analysis is to provide quantitative estimates for $a_\eps$ and 
for some modified kernels. The linearisation of the force will be relevant for the further analysis. We denote 
\begin{align}
\label{eqn:Lin0}
P=\Phi^{\prime\prime}\at{R_0}\,.
\end{align}
In particular, $P$ is nonnegative and satisfies
\begin{align}
\label{AsympStatesP}
\lim_{x\to\pm\infty} P\at{x}=p_\pm\,,\qquad
p_\pm:=\Phi^{\prime\prime}\at{r_\pm}.
\end{align}
\bigpar\emph{Remark} In the Hertzian case as in \eqref{hpot} we have $p_+=0$.

\subsection{Properties of the convolution operator}
The main properties of the convolution operator follow from the next proposition about the Fourier transformed kernel $\hat{a}_\eps$.

\begin{proposition}\label{prop:symbol}
Let $\eta_-, \eta_+$ be such that $0>-\eta_-> 1-p_-$  and $0< \eta_+< 1-p_+$. Also let $0<s<1$. Then there exists $\eps_0>0$ and $C$ such that for  all $0<\eps\leq \eps_0$  with  $\Breve{\eta}_+:= \frac{1}{2}(\eta_++(1-p_+))$ and 
$\Breve{\eta}_-:= \frac{1}{2}((p_- -1)+\eta_-)$  the Fourier symbol $\hat{a}_\eps$ is holomorphic in $ \Rset \times \iu [-\Breve{\eta}_-,\Breve{\eta}_+]$  and the following uniform estimates hold
\begin{align}
    &\hat{a}_\eps(.) (1+ |.|)\in \fspaceL^\infty(\Rset \times \iu [-\Breve{\eta}_-,\Breve{\eta}_+]), \label{eqn:fourlinftyest}\\
    &\|\hat{a}_\eps- \hat{a}_0 \|_{\fspaceL^\infty(\Rset \times \iu [-\Breve{\eta}_-,\Breve{\eta}_+]) }\leq C \eps,
    \label{eqn:fourerror}\\
    &\|(\hat{a}_\eps(.)- \hat{a}_0(.)) (1+ |.|^{1-s}) \|_{\fspaceL^\infty(\Rset \times \iu [-\Breve{\eta}_-,\Breve{\eta}_+]) }\leq C \eps^{s},
    \label{eqn:fourerror2}\\
    &\sup_{\eta \in [-\Breve{\eta}_-,\Breve{\eta}_+]} \int_\Rset \left|\hat{a}_\eps\left(k + \iu \frac{\eta}{2 \pi} \right)\right|^2\dint{k} < C. \label{eqn:fourL2exp}
\end{align}
\end{proposition}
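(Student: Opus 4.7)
The strategy is to treat $\hat{a}_\eps(z) = \sinc^2(\eps\pi z)/D_\eps(z)$, with $D_\eps(z) := 1+2\pi\iu z\,\sinc^2(\eps\pi z)$, as a holomorphic perturbation of the rational limit symbol $\hat{a}_0(z) = 1/D_0(z)$, $D_0(z) = 1+2\pi\iu z$. The only pole of $\hat{a}_0$ is at $z = \iu/(2\pi)$, i.e.\ on the horizontal line $\eta = 1$; the hypotheses $0<\eta_+<1-p_+$ and $0>-\eta_->1-p_-$ together with the midpoint definitions of $\breve\eta_\pm$ imply $\breve\eta_+<1-p_+\leq 1$, so the strip is separated by a positive distance from the line $\eta = 1$. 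Writing $z = k+\iu\eta/(2\pi)$ gives $D_0(z) = (1-\eta)+2\pi\iu k$, hence $|D_0(z)|\geq c(1+|z|)$ uniformly on the strip. All four bounds \eqref{eqn:fourlinftyest}--\eqref{eqn:fourL2exp} will be reduced to a single statement: the denominator satisfies $|D_\eps(z)|\geq c_0>0$ on the strip for all $\eps\in(0,\eps_0]$.

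The central analytic input is pointwise control of $\sinc^2(\eps\pi z)$. The identity $|\sin w|^2 = \sin^2(\mathrm{Re}\,w)+\sinh^2(\mathrm{Im}\,w)$ applied to $w = \eps\pi z$, for which $|\mathrm{Im}\,w| = \eps|\eta|/2$ is uniformly small, yields the envelope $|\sinc(\eps\pi z)|\leq C/(1+\eps|z|)$ on the strip, while the Taylor expansion $\sinc^2(w) = 1-w^2/3+O(w^4)$ near the origin, combined with the envelope on the remote part, produces
\[
|\sinc^2(\eps\pi z)-1|\leq C\min(\eps^2|z|^2,\,1)\leq C\eps|z|
\]
uniformly in $\eps$ and $z$ on the strip. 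This controls the perturbation $D_\eps(z)-D_0(z) = 2\pi\iu z\,(\sinc^2(\eps\pi z)-1)$ and will also drive the error estimates below.

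Uniform invertibility $|D_\eps|\geq c_0>0$ is then established in three regimes. On the real axis $\sinc^2(\eps\pi k)\in\Rset$ gives $\mathrm{Re}\,D_\eps(k) = 1$, so $|D_\eps(k)|\geq 1$ automatically. On any compact piece $|z|\leq M$ of the strip, $D_\eps\to D_0$ uniformly as $\eps\to 0$, so $|D_\eps|\geq\tfrac12\min_{|z|\leq M}|D_0|>0$ once $\eps$ is small. For the far regime $|z|\to\infty$ the envelope on $\sinc^2$ forces $D_\eps\to 1$. Once $|D_\eps|\geq c_0$ is secured, the four estimates follow by direct manipulation: \eqref{eqn:fourlinftyest} from the identity $\hat{a}_\eps(z) = (2\pi\iu z)^{-1}\bigl(1-1/D_\eps(z)\bigr)$ for $z\neq 0$ (combined with $\hat{a}_\eps(0)=1$); \eqref{eqn:fourerror} from the cancellation
\[
\hat{a}_\eps(z)-\hat{a}_0(z) \;=\; \frac{\sinc^2(\eps\pi z)-1}{D_\eps(z)\,D_0(z)},
\]
together with $|\sinc^2-1|\leq C\eps|z|$, $|D_\eps|\geq c_0$, and $|D_0|\geq c(1+|z|)$; \eqref{eqn:fourerror2} by interpolating between \eqref{eqn:fourerror} and the weighted bound $(1+|z|)|\hat{a}_\eps-\hat{a}_0|\leq C$ coming from \eqref{eqn:fourlinftyest} applied to both $\hat{a}_\eps$ and $\hat{a}_0$; and \eqref{eqn:fourL2exp} by integrating the square of \eqref{eqn:fourlinftyest} along horizontal slices, since $\int_{\Rset}(1+k^2)^{-1}\,\dint{k}<\infty$.

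The main obstacle is the uniform lower bound on $|D_\eps|$ in the intermediate regime where $|k|$ is already large but $\eps|k|$ is still of order one: neither the perturbative argument near $\eps=0$ (effective only on a compact piece $|z|\leq M$) nor the envelope decay of $\sinc^2$ (effective only for $\eps|z|\gg 1$) applies alone. Ruling out that any zero of $D_\eps$ migrates into the strip through this window as $\eps$ varies requires genuinely quantitative control of the complex shape of $\sinc^2$ off the real axis, beyond both its Taylor expansion near $0$ and its envelope at infinity, and is the technically most demanding step.
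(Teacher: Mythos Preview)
Your reduction is clean: the identities $\hat a_\eps = (2\pi\iu z)^{-1}(1-1/D_\eps)$ and $\hat a_\eps-\hat a_0 = (\sinc^2(\eps\pi z)-1)/(D_\eps D_0)$ make the four estimates immediate consequences of the single claim $|D_\eps|\geq c_0$, and this is more direct than the paper's route, which instead isolates the simple pole of $\hat a_\eps$ near $z=\iu/(2\pi)$ as a rational term $\hat b_\eps$ (via Rouch\'e, Lemma~\ref{lem:pole}), bounds the holomorphic remainder $\hat a_\eps-\hat b_\eps$ by Cauchy's formula on a ball of radius comparable to $1/\eps$ (Lemma~\ref{lem:bulkest}), and only afterwards compares $\hat b_\eps$ to $\hat a_0$.

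But the key claim $|D_\eps|\geq c_0$ is left unproved in the regime that matters. Your three regimes do not cover the strip uniformly in $\eps$: the real-axis observation is a measure-zero set; the perturbative bound $|D_\eps-D_0|\to 0$ works only for $|z|\leq M$ fixed; and the envelope yields $|D_\eps-1|\leq 2\pi C|z|/(1+\eps|z|)^2$, which is small only once $\eps^2|z|$ is large (not merely $\eps|z|$ large, as you suggest), so the whole window from $|z|\sim M$ up to $|z|\sim 1/\eps^2$ remains uncovered. You flag this yourself in the last paragraph but do not close it, and it is the heart of the proof. The paper handles exactly this window in Lemma~\ref{lem:A.tailest}: after rescaling to $w=\eps\pi z$ the strip becomes $|\mathrm{Im}\,w|\leq C\eps$, and the bound $|w\,\hat a_\eps|\leq C$ (equivalent to your $|D_\eps|\geq c_0$ via your own identity) is obtained by a three-case comparison of $|\eps w|$ against $|\sin^2 w|$. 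The delicate balanced case exploits that on this thin strip both $w$ and $\sin^2 w$ are almost real, so in the denominator $\eps w^2 + 2\iu w\sin^2 w$ the two contributions lie in nearly orthogonal directions and cannot cancel; your computation $\mathrm{Re}\,D_\eps(k)=1$ on the real axis is exactly the limiting version of this mechanism, and extending it off the real axis to the full $O(\eps)$-strip is what remains to be done.
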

The proof will be given in section \ref{Sec:Four}. From Proposition \ref{prop:symbol} we  obtain several exponential estimates.
\begin{corollary} \label{cor:a} Let $\eta_-,\Breve{\eta}_-, \eta_+,\Breve{\eta}_+ $ be as in Proposition \ref{prop:symbol} then 
\begin{align}
  \label{eqn:L2exp}  \int_{\Rset} a_\eps^2 (x) \exp ( 2\eta x) \dint{x}&< C \mbox{ for }  \eta \in [-\Breve{\eta}_-, \Breve{\eta}_+],\\
   \label{eqn:posL1exp}  \int_0^\infty a_\eps(x) \exp ( \eta x) \dint{x}& < \frac{C}{\sqrt{\Breve{\eta}_+-\eta}} \mbox{ for } \eta \in [0,\Breve{\eta}_+), \\ \label{eqn:negL1exp} 
    \int^0_{-\infty} a_\eps(x) \exp ( \eta x) \dint{x}& < \frac{C}{\sqrt{ \Breve{\eta}_-+ \eta}} \mbox{ for } \eta \in (-\Breve{\eta}_-,0].
\end{align}
\end{corollary}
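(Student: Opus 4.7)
The plan is to deduce all three estimates directly from Proposition \ref{prop:symbol}, using Plancherel's theorem on a shifted contour followed by Cauchy--Schwarz.

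For \eqref{eqn:L2exp}, I would first exploit the holomorphy of $\hat{a}_\eps$ in the complex strip $\Rset\times\iu[-\Breve{\eta}_-,\Breve{\eta}_+]$ together with the uniform decay $|\hat{a}_\eps(k+\iu\eta/(2\pi))|\lesssim(1+|k|)^{-1}$ from \eqref{eqn:fourlinftyest} to justify shifting the contour in the Fourier inversion formula. Standard Paley--Wiener reasoning then identifies the Fourier transform of $x\mapsto \mathtt{e}^{\eta x}a_\eps(x)$ with $k\mapsto \hat{a}_\eps(k+\iu\eta/(2\pi))$. Plancherel's theorem yields
\begin{equation*}
    \int_{\Rset} a_\eps^2(x)\,\mathtt{e}^{2\eta x}\dint{x} = \int_{\Rset}\bigl|\hat{a}_\eps\bigl(k+\iu\tfrac{\eta}{2\pi}\bigr)\bigr|^2\dint{k},
\end{equation*}
and the right-hand side is uniformly bounded in $\eps\in(0,\eps_0]$ and $\eta\in[-\Breve{\eta}_-,\Breve{\eta}_+]$ by \eqref{eqn:fourL2exp}.

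For \eqref{eqn:posL1exp}, I would split $\mathtt{e}^{\eta x}=\mathtt{e}^{\Breve{\eta}_+ x}\cdot \mathtt{e}^{(\eta-\Breve{\eta}_+)x}$ and apply Cauchy--Schwarz on $[0,\infty)$:
\begin{equation*}
    \int_0^\infty a_\eps(x)\,\mathtt{e}^{\eta x}\dint{x} \leq \left(\int_0^\infty a_\eps^2(x)\,\mathtt{e}^{2\Breve{\eta}_+ x}\dint{x}\right)^{1/2}\left(\int_0^\infty \mathtt{e}^{-2(\Breve{\eta}_+-\eta)x}\dint{x}\right)^{1/2}.
\end{equation*}
The first factor is uniformly bounded by the first step applied at the endpoint $\eta=\Breve{\eta}_+$, while the second evaluates to $(2(\Breve{\eta}_+-\eta))^{-1/2}$, producing the claimed blow-up rate as $\eta\uparrow\Breve{\eta}_+$. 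Estimate \eqref{eqn:negL1exp} follows by the mirrored argument on $(-\infty,0]$, splitting $\mathtt{e}^{\eta x}=\mathtt{e}^{-\Breve{\eta}_- x}\cdot\mathtt{e}^{(\eta+\Breve{\eta}_-)x}$ and integrating the exponential over the negative half-line, for which integrability requires precisely $\eta>-\Breve{\eta}_-$.

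The main (and essentially only) obstacle is the rigorous justification of the contour shift at the boundary values $\eta=\pm\Breve{\eta}_\pm$. In the open strip this is a standard consequence of holomorphy combined with the $(1+|k|)^{-1}$ decay of \eqref{eqn:fourlinftyest}, which makes the integrand of the Cauchy integral along any closed rectangle go to zero on its vertical sides as $|k|\to\infty$. Extension up to the closed strip can be done by continuity of both sides in $\eta$, using dominated convergence controlled by the uniform $L^2$ bound \eqref{eqn:fourL2exp}. Once the shift is in place, everything else is a short computation.
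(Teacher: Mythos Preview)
Your proposal is correct and follows essentially the same route as the paper: the paper obtains \eqref{eqn:L2exp} by invoking the Paley--Wiener theorem \cite[Thm~IX.13]{RSbook} together with \eqref{eqn:fourL2exp} (which is exactly the contour-shift-plus-Plancherel argument you outline), and then derives \eqref{eqn:posL1exp} and \eqref{eqn:negL1exp} by the identical Cauchy--Schwarz splitting at the endpoint weights $\Breve{\eta}_\pm$. Your concern about the boundary values of $\eta$ is harmless here, since Proposition~\ref{prop:symbol} supplies the $L^2$ bound \eqref{eqn:fourL2exp} uniformly on the closed interval $[-\Breve{\eta}_-,\Breve{\eta}_+]$.
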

\begin{proof}
    The estimate \eqref{eqn:L2exp} follows with \cite[Thm IX.13]{RSbook} and \eqref{eqn:fourL2exp}. Then with the Cauchy-Schwarz inequality  we obtain \eqref{eqn:posL1exp}
\begin{align*}
    \int_0^\infty a_\eps(x) \exp ( \eta x) \dint{x}&= 
    \int_0^\infty a_\eps(x) \exp ( \Breve{\eta}_+ x) 
    \exp ( (\eta-\Breve{\eta}_+) x) \dint{x} \\&\leq
    \| a_\eps(.) \exp( \Breve{\eta}_+ .) \|_{\fspaceL^2} 
    \sqrt{ \int_0^\infty  \exp ( 2 (\eta-\Breve{\eta}_+) x) \dint{x} }
    \leq \frac{C}{\sqrt{\Breve{\eta}_+-\eta}}
\end{align*}
    and similarly \eqref{eqn:negL1exp}.
\end{proof}
 We now introduce suitable exponentially weighted function spaces. 
\begin{definition} For $\eta \in \Rset$, define 
  \begin{align*}
      \fspaceH_\eta^1(\Rset)&= \{ w \mid \exp( \eta .) w(.) \in  \fspaceH^1(\Rset) \} & \mbox{  with norm } \|w\|_{\fspaceH_\eta^1}=& \|\exp( \eta .) w(.)\|_{\fspaceH^1},\\
      \fspaceL_\eta^2(\Rset)&= \{ w \mid \exp( \eta .) w(.) \in  \fspaceL^2(\Rset) \} &\mbox{  with norm } \|w\|_{\fspaceL^2_\eta}=& \|\exp( \eta .) w(.)\|_{\fspaceL^2}.
  \end{align*}
\end{definition}
With $\eta \in (-\Breve{\eta}_-, \Breve{\eta}_+) $,  $w \in \fspaceH_\eta^1(\Rset) $ and $v= a_\eps \ast w$ we define
\begin{align}
     \tilde{w}(.)&= \exp( \eta .) w(.), \nonumber\\
     \tilde{v}(.)&= \exp( \eta .) v(.)= \exp( \eta .) 
     a_\eps \ast (\exp( -\eta .) \tilde{w}(.)) = \tilde{a}_\eps \ast \tilde{w}, \label{eqn:v}
\end{align} 
where transformed kernel is given by
\begin{align}\label{def:tildea}
    \tilde{a}_\eps(x)= a_\eps(x) \exp(\eta x). 
\end{align}
We also obtain 
\begin{align}\tilde{v}'&=
\tilde{a}_\eps \ast \tilde{w}'. \label{eqn:v'} 
\end{align} 
The transformed kernel satisfies $\tilde{a}_\eps  \in \fspaceL^1(\Rset)$ by Corollary \ref{cor:a}, and its Fourier transform can be calculated as
\begin{align}
    \label{eqn:tildeaFour}
    \hat{\tilde{a}}_\eps(k)= \hat{a}_\eps\left(k-\frac{\eta}{2 \pi \iu}\right)=\hat{a}_\eps\left(k+\frac{\iu \eta}{2 \pi}\right).
\end{align}
The convolution operator is continuous on the exponentially weighted spaces as shown below.
\begin{proposition}\label{prop:weight}
   \begin{enumerate}
       \item For $ w \in \fspaceL^2_\eta(\Rset)$ the map  $w \mapsto 
       a_\eps \ast w \in \fspaceH^1_\eta(\Rset) $ is well-defined and continuous  with respect to $w$, uniformly for $\eta \in (-\Breve{\eta}_-, \Breve{\eta}_+)$ and for $0< \eps< \eps_0$.
       \item The operators are continuous with respect to $\eps$, i.e. there exists $C>0$ such that
       \begin{align}
           \label{eqn:expaeps}
           \| a_\eps \ast w - a_0 \ast w \|_{\fspaceH^1_\eta(\Rset)}
           & \leq C \eps \|w \|_{\fspaceH^1_\eta(\Rset)} 
                  \end{align}
       uniformly for $\eta \in (-\Breve{\eta}_-, \Breve{\eta}_+)$.
       \item Furthermore for $0<s<1$, there exists $C_s>0$ such that
       \begin{align}
            \| a_\eps \ast w - a_0 \ast w \|_{\fspaceH^1_\eta(\Rset)}
           & \leq C \eps^s \|w \|_{\fspaceH^s_\eta(\Rset)} \label{eqn:expaeps2} 
       \end{align}
       uniformly for $\eta \in (-\Breve{\eta}_-, \Breve{\eta}_+)$.
   \end{enumerate}  
\end{proposition}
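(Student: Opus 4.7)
The central idea is to conjugate the convolution by the exponential weight so that estimates on the weighted spaces become standard Plancherel/multiplier estimates for the shifted Fourier symbol $\hat{\tilde{a}}_\eps(k)=\hat{a}_\eps\at{k+\iu\eta/(2\pi)}$ from \eqref{eqn:tildeaFour}, to which the strip bounds of Proposition \ref{prop:symbol} apply uniformly in $\eta\in[-\Breve{\eta}_-,\Breve{\eta}_+]$ and in $\eps\in(0,\eps_0]$. In each of the three parts the weighted norms are first rewritten as unweighted norms of the tilded quantities via $\|v\|_{\fspaceH^1_\eta}\sim\|\tilde{v}\|_{\fspaceH^1}$ and $\|w\|_{\fspaceH^s_\eta}\sim\|\tilde{w}\|_{\fspaceH^s}$, and then the multiplier relation $\hat{\tilde{v}}=\hat{\tilde{a}}_\eps\hat{\tilde{w}}$ is exploited.

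For part (1), given only $w\in\fspaceL^2_\eta$, I would not differentiate $\tilde w$ directly but rather read off $\widehat{\tilde{v}'}(k)=2\pi\iu k\,\hat{\tilde{a}}_\eps(k)\hat{\tilde{w}}(k)$ from the symbol. By Plancherel,
\[
\|v\|_{\fspaceH^1_\eta}^2 \leq C\int_\Rset (1+|k|)^2 |\hat{\tilde{a}}_\eps(k)|^2 |\hat{\tilde{w}}(k)|^2 \dint{k}\leq C\,\|(1+|\cdot|)\hat{\tilde{a}}_\eps\|_{\fspaceL^\infty(\Rset)}^2\,\|w\|_{\fspaceL^2_\eta}^2,
\]
and the multiplier norm is uniformly controlled by \eqref{eqn:fourlinftyest} applied on the shifted horizontal line in the strip.

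For part (2), the difference satisfies $\hat{\tilde{v}}-\hat{\tilde{v}}_0=(\hat{\tilde{a}}_\eps-\hat{\tilde{a}}_0)\hat{\tilde{w}}$, and the same Plancherel estimate yields
\[
\|v-v_0\|_{\fspaceH^1_\eta}\leq \|\hat{\tilde{a}}_\eps-\hat{\tilde{a}}_0\|_{\fspaceL^\infty(\Rset)}\,\|(1+|\cdot|)\hat{\tilde{w}}\|_{\fspaceL^2(\Rset)}\leq C\eps\,\|w\|_{\fspaceH^1_\eta},
\]
where \eqref{eqn:fourerror} supplies the $\eps$-smallness of the symbol difference and the factor $(1+|k|)$ is absorbed against the available $\fspaceH^1$-regularity of $w$.

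For part (3), I would use the elementary split $1+|k|\leq C(1+|k|^{1-s})(1+|k|^s)$ to distribute the order-one multiplier between the kernel difference and the data, then invoke \eqref{eqn:fourerror2} to bound $(1+|k|^{1-s})(\hat{\tilde{a}}_\eps-\hat{\tilde{a}}_0)$ in $\fspaceL^\infty$ by $C\eps^s$, and recognize the remaining $\|(1+|k|^s)\hat{\tilde{w}}\|_{\fspaceL^2}$ as (an equivalent of) $\|w\|_{\fspaceH^s_\eta}$. The main subtlety is the uniformity in $\eta$: one must verify that the strip estimates \eqref{eqn:fourlinftyest}--\eqref{eqn:fourerror2} translate, via the identity \eqref{eqn:tildeaFour}, into bounds for $\hat{\tilde{a}}_\eps$ on the real line that are independent of $\eta\in[-\Breve{\eta}_-,\Breve{\eta}_+]$; since Proposition \ref{prop:symbol} is formulated precisely as a uniform $\fspaceL^\infty$ bound on the strip, this translation is immediate and is the reason the whole argument reduces to three essentially mechanical Plancherel estimates.
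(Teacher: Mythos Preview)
Your proposal is correct and follows essentially the same approach as the paper: conjugate by the exponential weight to reduce to Plancherel/multiplier estimates for the shifted symbol $\hat{\tilde a}_\eps$, then invoke \eqref{eqn:fourlinftyest}, \eqref{eqn:fourerror}, and \eqref{eqn:fourerror2} for parts (1), (2), (3) respectively. The only cosmetic difference is in part (3), where the paper re-derives the $\eps^s$ bound by splitting into $|k|<1/\eps$ and $|k|\geq 1/\eps$ and combining \eqref{eqn:fourerror} with \eqref{eqn:fourlinftyest}, whereas you apply \eqref{eqn:fourerror2} directly via the elementary factorisation $1+|k|\leq C(1+|k|^{1-s})(1+|k|^s)$; your route is slightly more streamlined but equivalent.
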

\begin{proof}
This follows from Proposition \ref{prop:symbol}.    For  a  fixed $\eta$ we have with \eqref{eqn:v} and \eqref{eqn:v'} using the Plancherel theorem
    \begin{align*}
        \|a_\eps \ast w \|_{\fspaceH^1_\eta(\Rset)} =
        \|\tilde{a}_\eps \ast \tilde{w} \|_{\fspaceH^1(\Rset)}
        \leq C \sup_{k \in \Rset} \left| \hat{a}_\eps\left(k+\frac{\iu \eta}{2 \pi}\right) (1+|k|) \right| \| \tilde{w} \|_{\fspaceL^2(\Rset)},
    \end{align*}
    then part (1) follows from \eqref{eqn:fourlinftyest}. 
    We also obtain
     \begin{align*}
        \|a_\eps \ast w -a_0 \ast w  \|_{\fspaceH^1_\eta(\Rset)} =&
        \|(\tilde{a}_\eps -\tilde{a}_0) \ast \tilde{w} \|_{\fspaceH^1(\Rset)}\\
        \leq &\sup_{k \in \Rset} \left| \hat{a}_\eps\left(k+\frac{\iu \eta}{2 \pi}\right) - \hat{a}_0\left(k+\frac{\iu \eta}{2 \pi}\right)\right| \| \tilde{w} \|_{\fspaceH^1(\Rset)},
    \end{align*}    
    such that part (2) follows with \eqref{eqn:fourerror}. And 
    combining both together and the boundedness of the symbol  $\hat{a}_0$
    with \eqref{eqn:fourerror2} yields:
\begin{align*}
        \|a_\eps \ast w -a_0 \ast w  \|_{\fspaceH^1_\eta(\Rset)} &=
        \|(\tilde{a}_\eps -\tilde{a}_0) \ast \tilde{w} \|_{\fspaceH^1(\Rset)}\\
        &\leq \sup_{k \in \Rset} \left| (1+|k|^{1-s}) \left(\hat{a}_\eps\left(k+\frac{\iu \eta}{2 \pi}\right) - \hat{a}_0\left(k+\frac{\iu \eta}{2 \pi}\right)\right)\right| \| \tilde{w} \|_{\fspaceH^{s}(\Rset)}\\ 
        &\leq \Bigl[ (1 + \frac{1}{\eps^{1-s}}) \sup_{|k|<1/\eps } \left|  \left(\hat{a}_\eps\left(k+\frac{\iu \eta}{2 \pi}\right) - \hat{a}_0\left(k+\frac{\iu \eta}{2 \pi}\right)\right)  \right| \\& \quad +  2 \eps^s \sup_{|k|\geq 1/\eps } \left|  |k| \left(\hat{a}_\eps\left(k+\frac{\iu \eta}{2 \pi}\right) - \hat{a}_0\left(k+\frac{\iu \eta}{2 \pi}\right)\right)  \right|  \Bigr]  \| \tilde{w} \|_{\fspaceH^{s}(\Rset)}\\ &\leq C \eps^s \| \tilde{w} \|_{\fspaceH^{s}(\Rset)},
    \end{align*}
which completes the proof of part (3).
\end{proof}

\section{Existence of fronts using an implicit function argument}\label{sec:IFT}

Using the estimates in the previous sections we check the assumptions in an implicit function argument. For that, 
we first rewrite \eqref{eq:fixedpt} using $R_0$ as in \eqref{eqn:R0} and let 
\begin{align} 
  F(\eps,W) = \begin{cases}
      R_0+W- a_\eps \ast \Phi^\prime\at{R_0+W} & \text{ for } \eps \geq 0,  \\F(- \eps,W) &\text{ for } \eps <0.
  \end{cases} \label{eq:F}
\end{align}
It is also convenient to extend $\Phi^\prime$
outside $[0,1]$ such that the second derivative $\Phi^{\prime\prime}$ is constant on 
$\mathbb{R}_0^-$ and 
on $[1,+\infty)$.
Then suitable zeros of $F$ are the desired traveling waves.
We use the following version of the implicit function theorem   as in \cite[XVII.4.1]{KantFunc}.
\begin{theorem}\label{thm:IFT}
  Let $X,Y,Z$ be Banach spaces, let $F$ be an operator defined in a neighborhood $\Omega$ of a point $(x_0,y_0) \in X \times Y$ and mapping $\Omega$ to $Z$. Suppose
  \begin{enumerate}
    \item $F$ is continuous at $(x_0,y_0)$;
    \item $F(x_0,y_0)=0$;
    \item $D_2 F$ exists in $\Omega$ and is continuous at $(x_0,y_0)$;
    \item the operator $D_2 F(x_0,y_0) \in B(Y,Z)$ has a continuous inverse in $B(Z,Y)$.
  \end{enumerate}
  Then there exists an operator $G$ defined in a neighborhood $U \subset X$ of $x_0$ and mapping $U$ into $Y$ with the following properties:
  \begin{enumerate}
    \item[(a)] $F(x,G(x))=0$ for $x \in U$;
    \item[(b)] $G(x_0)=y_0$;
    \item[(c)] $G$ is continuous at $x_0$.
  \end{enumerate}
  The operator $G$ defined by (a)-(c) is unique in a neighborhood of $x_0$.
\end{theorem}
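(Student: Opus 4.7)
My plan is to establish existence of $R_\eps$ via an application of Theorem \ref{thm:IFT} to the operator $F$ from \eqref{eq:F}, with monotonicity and the sharp exponential rates deduced subsequently. The unperturbed problem \eqref{normcontlim}--\eqref{bctwnorm}--\eqref{phase} is the scalar autonomous ODE $R_0'=\Phi'(R_0)-R_0$; strict convexity of $\Phi'$ together with $\Phi'(0)=0$, $\Phi'(1)=1$ makes the right-hand side strictly negative on $(0,1)$ and vanishing at the endpoints, so separation of variables combined with the pinning $R_0(0)=1/2$ yields a unique strictly decreasing heteroclinic $R_0$ with exponential convergence to $1$ and $0$ at $-\infty$ and $+\infty$ respectively, at rates governed by the linearisations $1-p_\pm$.

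For the perturbed equation I choose $X=\Rset$ and $Y=Z$ to be an exponentially weighted Sobolev space of the form $\fspaceH^1_\eta(\Rset)$ (or, for the final statement of exponential convergence at both ends, an intersection of two such spaces with weights of opposite sign), with $\eta\in(-\Breve{\eta}_-,\Breve{\eta}_+)$; elements then automatically exhibit exponential decay at the relevant infinity. One verifies that $F(0,0)=0$ because $a_0\ast g$ is the unique bounded solution of $u+u'=g$, so $R_0=a_0\ast\Phi'(R_0)$ is equivalent to \eqref{eqn:R0}. Continuity of $F$ and existence of $D_2F$ at $(0,0)$ follow from Proposition \ref{prop:weight}(1)--(2) together with continuity of the superposition map $W\mapsto\Phi'(R_0+W)$ on the weighted space; in the Hertzian case, where only $\Phi'\in\fspaceC^{2,\beta}$, this substitution step is delicate and requires the Kato-Ponce/Moser-type product estimate alluded to in the introduction to control the fractional regularity of the composition. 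A direct computation gives $D_2F(0,0)=I-a_0\ast\bigl(P\cdot\bigr)$ with $P=\Phi''(R_0)$ as in \eqref{eqn:Lin0}.

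The critical hypothesis is invertibility of $L:=D_2F(0,0)$ on the chosen weighted space. Applying the formal inverse $1+\partial_x$ of convolution with $a_0$ to $Lw=f$ converts the equation into the first-order linear ODE $w'+(1-P)w=f+f'$. Because $P(x)\to p_\pm$ at $\pm\infty$ with $p_\pm<1$ and the admissible ranges $\eta_+<1-p_+$ and $-\eta_->1-p_-$ are precisely encoded in the definitions of $\Breve{\eta}_\pm$, a variation-of-constants formula produces a bounded inverse on $\fspaceH^1_\eta$: the exponential weight dominates both fundamental solutions of the homogeneous equation at the two ends. This is the main obstacle, as a single weight must remain compatible with both asymptotic linearisations simultaneously; it is precisely for this reason that the authors work in weighted spaces and rely on the quantitative Fourier estimates of Proposition \ref{prop:symbol}. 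With $L^{-1}$ in hand, Theorem \ref{thm:IFT} supplies a unique $W_\eps$ near $0$ with $F(\eps,W_\eps)=0$; the quantitative estimate \eqref{H1est} follows from the standard identity $W_\eps=-L^{-1}F(\eps,0)+o(\eps)$ combined with $\|F(\eps,0)\|_{\fspaceH^1_\eta}=\|(a_0-a_\eps)\ast\Phi'(R_0)\|_{\fspaceH^1_\eta}=\DO{\eps}$ via Proposition \ref{prop:weight}(2). Monotonicity and sharp exponential decay rates at $\pm\infty$ are not produced directly by the implicit function step but are obtained afterwards from the auxiliary equation satisfied by $R_\eps'$ and the conditional improvement (bootstrap) scheme announced in the introduction and carried out in Section 5, which refines the weight $\eta$ to the optimal values identified in Proposition \ref{Lem.AP.ModKernels}.
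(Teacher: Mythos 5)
Your proposal does not address the statement that was actually set. Theorem \ref{thm:IFT} is an abstract implicit function theorem in Banach spaces, which the paper takes as a known result and cites verbatim from Kantorovich--Akilov \cite[XVII.4.1]{KantFunc}; the paper supplies no proof of it, and the standard argument is to apply the contraction mapping principle to the map $T_x(y) = y - \bigl[D_2F(x_0,y_0)\bigr]^{-1}F(x,y)$ on a small closed ball around $y_0$, using hypothesis (3) to get the Lipschitz constant of $T_x$ below $1$ (via $\|I - [D_2F(x_0,y_0)]^{-1}D_2F(x,y)\| < 1$ near $(x_0,y_0)$) and hypotheses (1)--(2) to ensure $T_x$ maps the ball into itself for $x$ near $x_0$; uniqueness and continuity of $G$ at $x_0$ then follow from uniqueness of the fixed point and the continuity estimates. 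What you have written instead is a sketch of how Theorem \ref{thm:IFT} is \emph{applied} in Section 4 to obtain the existence part of Theorem \ref{existthm}: you describe the choice of weighted spaces, the identity $F(0,0)=0$, the computation $D_2F(0,0)=I-a_0\ast\bigl(\Phi''(R_0)\,\cdot\,\bigr)$, and the ODE-based invertibility argument. That outline is faithful to the paper's Lemmas \ref{lem:F1}--\ref{lem:inv} and to \S\ref{subsec:fronts}, but it is a summary of the \emph{use} of the theorem, not a proof of the theorem itself.

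If the intended target had been the application rather than the abstract theorem, your sketch would be essentially aligned with the paper, with one caveat worth flagging: the paper does not work in a single weighted space $\fspaceH^1_\eta(\Rset)$ as your first paragraph suggests, but in the intersection $\fspaceH^1_{\eta_+}(\Rset)\cap\fspaceH^1_{-\eta_-}(\Rset)$ (with the additional normalization $w(0)=0$ defining $Y$), precisely because a single weight with a fixed sign cannot simultaneously dominate the two different asymptotic rates $1-p_+$ and $p_--1$ at the two ends of the line; the invertibility argument in Lemma \ref{lem:inv} is carried out separately for $x>0$ with the $\eta_+$-weight and for $x<0$ with the $\eta_-$-weight, and only the intersection space yields a bounded inverse.
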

For $\eta_-, \eta_+$ as in Proposition \ref{prop:symbol} we set
\begin{align*}
  X  := \Rset, \quad
  Y  := \{w \in \fspaceH_{\eta_+}^1(\Rset) \cap \fspaceH_{-\eta_-}^1(\Rset)\mid w(0)=0 \},  \quad
  Z  :=  \fspaceH_{\eta_+}^1(\Rset) \cap \fspaceH_{-\eta_-}^1(\Rset).
\end{align*}
As an auxiliary space we set 
\[ Z_0  :=  \fspaceL_{\eta_+}^2(\Rset) \cap \fspaceL_{-\eta_-}^2(\Rset).\]
We fix a finite ball in $X \times Y$ around $(0,0)$ and name it $\Omega$. We can now check that $F$ satisfies the assumptions of Theorem \ref{thm:IFT}. First we rewrite \eqref{eq:F}
\begin{align}\label{eq:Fsep}
  F (\eps,W) = F_1(\eps)&+F_2(\eps,W) \text{ with }\\
 F_1(\eps)&:= R_0- a_\eps \ast \Phi^\prime\at{R_0}  \nonumber \\
F_2(\eps,W)&:=  W - \left(a_\eps \ast \Phi^\prime\at{R_0+W} - a_\eps \ast \Phi^\prime\at{R_0} \right) .\nonumber
\end{align}
\subsection{Continuity properties}
\begin{lemma}\label{lem:F1}
 The constant part $F_1$ of the operator $F$ satisfies   $F_1: X \to Z $  and is continuous at $\eps=0$.
\end{lemma}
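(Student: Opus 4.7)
The plan is to first verify that $F_1(0)=0$ and then bound $F_1(\eps)-F_1(0)$ in $Z$ via Proposition~\ref{prop:weight}. Since $R_0$ solves $(1+\partial_x)R_0=\Phi'(R_0)$ with the boundary conditions \eqref{eqn:R0bc}, variation of constants with the Green's function $a_0(x)=e^{-x}\mathbb{1}_{x\geq 0}$ gives the pointwise identity $R_0=a_0\ast\Phi'(R_0)$ (well-defined thanks to the decay of $a_0$ at $+\infty$ and the boundedness of $\Phi'(R_0)$). Hence $F_1(0)=0$ and
\begin{equation*}
F_1(\eps)=(a_0-a_\eps)\ast\Phi'(R_0).
\end{equation*}
The direct obstruction to applying Proposition~\ref{prop:weight} is that $\Phi'(R_0)$ tends to $1$ at $-\infty$ and therefore lies in no exponentially weighted $L^2$-space; I would circumvent this by fixing a smooth cut-off $\psi\in\fspaceC^\infty(\Rset)$ with $\psi(x)=1$ for $x\leq-1$ and $\psi(x)=0$ for $x\geq 1$, and splitting
\begin{equation*}
\Phi'(R_0)=\psi+g,\qquad g:=\Phi'(R_0)-\psi.
\end{equation*}

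Linearising \eqref{eqn:R0} at the equilibria $0$ and $1$ shows that $R_0$ converges to its limits at rates $1-p_+$ and $p_--1$, both strictly larger than $\eta_+$ and $\eta_-$ by the choice of weights in Proposition~\ref{prop:symbol}. Consequently $g\in\fspaceH^1_{\eta_+}\cap\fspaceH^1_{-\eta_-}$ (its derivative $\Phi''(R_0)R_0'-\psi'$ inherits exponential decay via $\Phi\in\fspaceC^{2,\beta}$), and Proposition~\ref{prop:weight}(2) gives $\|(a_0-a_\eps)\ast g\|_Z=O(\eps)$. For the step contribution $(a_0-a_\eps)\ast\psi$, I would note that $\psi'\in\fspaceC^\infty_c(\Rset)\subset\fspaceH^1_{\eta_+}\cap\fspaceH^1_{-\eta_-}$, so another application of Proposition~\ref{prop:weight}(2) yields
\begin{equation*}
\bigl\|\bigl[(a_0-a_\eps)\ast\psi\bigr]'\bigr\|_{\fspaceL^2_{\eta_+}\cap\fspaceL^2_{-\eta_-}}
=\bigl\|(a_0-a_\eps)\ast\psi'\bigr\|_{\fspaceL^2_{\eta_+}\cap\fspaceL^2_{-\eta_-}}=O(\eps).
\end{equation*}
The normalisation $\int_\Rset(a_0-a_\eps)\dint z=0$ (both kernels have unit mass) together with dominated convergence forces $(a_0-a_\eps)\ast\psi(\pm\infty)=0$, so $(a_0-a_\eps)\ast\psi$ is the unique primitive of $(a_0-a_\eps)\ast\psi'$ that vanishes at both ends. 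Representing it as $-\int_x^{+\infty}$ near $+\infty$ and as $\int_{-\infty}^x$ near $-\infty$, Young's convolution inequality with the $\fspaceL^1$-kernels $e^{\eta_+ y}\mathbb{1}_{y\leq 0}$ and $e^{-\eta_- y}\mathbb{1}_{y\geq 0}$ (of $\fspaceL^1$-norms $1/\eta_+$ and $1/\eta_-$) transfers the weighted $\fspaceL^2$-bound from the derivative to the primitive, yielding $\|(a_0-a_\eps)\ast\psi\|_{\fspaceL^2_{\eta_+}\cap\fspaceL^2_{-\eta_-}}=O(\eps)$.

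Adding the two contributions gives $\|F_1(\eps)\|_Z=O(\eps)$, which proves both well-definedness of $F_1\colon X\to Z$ in a neighbourhood of $0$ and continuity at $\eps=0$. The main conceptual step is the splitting that removes the non-decaying asymptotic state of $\Phi'(R_0)$ at $-\infty$; once this is in place, Proposition~\ref{prop:weight}(2) handles the smooth residual directly, and the step part reduces to a weighted Young inequality via the mean-zero property of $a_0-a_\eps$.
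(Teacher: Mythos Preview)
Your proof is correct and complete, but it takes a different route from the paper. The paper exploits the ODE directly: writing
\[
F_1(\eps)=\bigl(R_0-a_\eps\ast R_0\bigr)-a_\eps\ast\bigl(\Phi'(R_0)-R_0\bigr),
\]
the second term is handled in one stroke because $\Phi'(R_0)-R_0=R_0'$ already lies in $\fspaceH^1_{\eta_+}\cap\fspaceH^1_{-\eta_-}$, and the first term is reduced (for the negative weight) via $R_0-a_\eps\ast R_0=(R_0-1)-a_\eps\ast(R_0-1)$, using $\int a_\eps=1$. Your decomposition instead subtracts a smooth step $\psi$ from $\Phi'(R_0)$ and then controls the step contribution through the mean-zero property of $a_0-a_\eps$ together with a weighted Young inequality passing from the derivative to the primitive. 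The paper's route is shorter because the ODE identity hands you a canonical decaying substitute for $\Phi'(R_0)$, avoiding the primitive-versus-derivative step; your route is more portable, since it does not rely on the special structure $\Phi'(R_0)-R_0=R_0'$ and would apply verbatim to any profile with the right asymptotics.
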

\begin{proof}
We rewrite 
    \begin{align*}
        F_1(\eps)&:= R_0- a_\eps \ast \Phi^\prime\at{R_0}= R_0- a_\eps \ast R_0 - a_\eps \ast \left( \Phi^\prime\at{R_0} - R_0 \right).
    \end{align*}
For $\eta \in (0,\eta_+]$, we have due the ODE \eqref{eqn:R0} that $\Phi^\prime\at{R_0} - R_0 \in \fspaceH_{\eta}^1(\Rset) $. Then $a_\eps \ast \left( \Phi^\prime\at{R_0} - R_0 \right) \in \fspaceH_{\eta}^1(\Rset)$ by Proposition \ref{prop:weight}, which also implies continuity at $\eps=0$. For $\eta \in (-\eta_-,0)$, we observe $R_0 \in\fspaceH_{\eta}^1(\Rset)$ such that we also have  $a_\eps \ast R_0 \to a_0 \ast R_0$ in  $\fspaceH_{\eta}^1(\Rset)$ by Proposition \ref{prop:weight}.

For $\eta \in [-\eta_-,0)$, we again have due to the ODE \eqref{eqn:R0} $\Phi^\prime\at{R_0} - R_0 \in \fspaceH_{\eta}^1(\Rset) $ such that  $a_\eps \ast \left( \Phi^\prime\at{R_0} - R_0 \right) \to 
a_0 \ast \left( \Phi^\prime\at{R_0} - R_0 \right)$ in  $\fspaceH_{\eta}^1(\Rset)$ by Proposition \ref{prop:weight}.
For the other part we observe
\begin{align*}
     R_0- a_\eps \ast R_0=  (R_0-1) - a_\eps \ast (R_0-1).
\end{align*}
We note that $R_0-1  \in\fspaceH_{\eta}^1(\Rset)$ which implies  $a_\eps \ast (R_0 -1) \to a_0 \ast (R_0-1) $ 
in  $\fspaceH_{\eta}^1(\Rset)$ as required. \end{proof}
\begin{lemma} \label{lem:superpos}
    The map
    \begin{align*}
        W \mapsto U:= (\Phi^\prime\at{R_0+W} - \Phi^\prime\at{R_0})
    \end{align*}
   is continuous as map $Y \to Z_0$. 
\end{lemma}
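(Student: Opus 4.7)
The plan is to upgrade the claimed continuity to a global Lipschitz estimate, which is the natural outcome of a mean-value expansion combined with pointwise boundedness of $\Phi''$. First I would write, for $W_1,W_2 \in Y$,
\[
U_1(x)-U_2(x) = \bigl(\Phi'(R_0+W_1)-\Phi'(R_0+W_2)\bigr)(x) = \bigl(W_1(x)-W_2(x)\bigr)\int_0^1 \Phi''\bigl(R_0(x)+W_2(x)+s(W_1(x)-W_2(x))\bigr)\dint{s},
\]
via the fundamental theorem of calculus. The assumption $\Phi \in \fspaceC^{2,\beta}([0,1])$ gives $\Phi''$ bounded on $[0,1]$, and the extension convention (which makes $\Phi''$ constant on $\Rset_0^-$ and on $[1,\infty)$) ensures that $M := \|\Phi''\|_{\fspaceL^\infty(\Rset)} < \infty$. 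Hence the crucial pointwise estimate
\[
|U_1(x)-U_2(x)| \leq M\,|W_1(x)-W_2(x)|
\]
holds for every $x\in\Rset$, irrespective of whether $R_0+W_i$ actually stays inside $[0,1]$.

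Second, multiplying by the weight $\exp(2\eta x)$ and integrating gives
\[
\|U_1-U_2\|_{\fspaceL^2_\eta} \leq M\,\|W_1-W_2\|_{\fspaceL^2_\eta}
\]
for arbitrary $\eta\in\Rset$, in particular for $\eta=\eta_+$ and $\eta=-\eta_-$. Adding the two estimates yields
\[
\|U_1-U_2\|_{Z_0} \leq M\,\|W_1-W_2\|_{\fspaceL^2_{\eta_+}\cap\fspaceL^2_{-\eta_-}} \leq M\,\|W_1-W_2\|_{Y},
\]
where the last inequality uses the trivial continuous embedding $\fspaceH^1_\eta(\Rset)\hookrightarrow\fspaceL^2_\eta(\Rset)$. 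Taking $W_2=W$ and letting $W_1\to W$ in $Y$ proves the continuity claim; in fact the map is globally Lipschitz from $Y$ into $Z_0$.

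There is no real obstacle to this proof: no regularity beyond boundedness of $\Phi''$ is needed, so the H\"older condition $\Phi\in\fspaceC^{2,\beta}$ is not exploited here (it will matter later when proving differentiability of $F$, not mere continuity of the superposition map into $Z_0$). The only small point worth flagging is that one must rely on the global boundedness of the \emph{extended} $\Phi''$, since elements $W\in Y$ are not assumed to take values in $[-R_0,1-R_0]$; this is precisely why the extension was introduced just before the implicit function setup in Section \ref{sec:IFT}.
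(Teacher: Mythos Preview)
Your proof is correct and follows essentially the same approach as the paper: both exploit the Lipschitz property of $\Phi'$ that comes from a uniform bound on $\Phi''$, then push this pointwise bound through the exponential weights. The only cosmetic difference is that the paper restricts to the ball $\Omega\subset Y$ and uses the Sobolev embedding $\|W\|_{\fspaceL^\infty}\leq C\|W\|_Y$ to confine the range of $R_0+W$ to a fixed compact interval, whereas you invoke the global extension of $\Phi''$ directly to obtain a Lipschitz estimate on all of $Y$; your remark that the H\"older exponent $\beta$ plays no role at this stage is also accurate.
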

\begin{proof}
For any exponential weight $\eta \in [-\eta_-,\eta_+]$ we let 
\begin{align*}
    W= \exp( -\eta \cdot ) \tilde{W}, \qquad   U= \exp( -\eta \cdot ) \tilde{U}.
\end{align*}
Then
\begin{align*}
    \tilde{U}&= \exp( \eta \cdot ) \left[ \Phi^\prime\at{R_0+W} - \Phi^\prime\at{R_0} \right] .  
\end{align*}
Noting that $R_0 \in \fspaceL^\infty (\mathbb{R})$ and $\|W\|_{\fspaceL^\infty}\leq C \|W\|_Y $ and using $\Phi \in \fspaceC^2$  we have that
$\Phi^\prime$ 
is uniformly Lipschitz on $\Omega$ as the range of  $R_0(.)+W(
.)$ with $W \in \Omega$ is contained in a fixed interval. Consequently  
\begin{align*}
    |\Phi^\prime\at{R_0+W} - \Phi^\prime\at{R_0}| &\leq C \tilde{W} \exp(- \eta \cdot),
\end{align*}
which yields  $U \in Z_0$ and continuity with respect to $W$. 
\end{proof}
\begin{lemma} \label{lem:F2}
    The non-constant part $F_2$ of the operator $F$ satisfies   $F_2: \Omega  \to Z $. It is continuous with respect to $\eps$ at $\eps=0$ and it is continuous with respect to $W$  in $\Omega$.
\end{lemma}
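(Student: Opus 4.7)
}
The plan is to write $U := \Phi^\prime(R_0+W) - \Phi^\prime(R_0)$ so that $F_2(\eps,W) = W - a_\eps \ast U$ and then reduce every claim to the combination of Lemma \ref{lem:superpos} (which controls the nonlinear part $U$ in $Z_0$) with Proposition \ref{prop:weight} (which lifts $\fspaceL^2$-information through the convolution into $\fspaceH^1$-information on the same exponential weight). Since $W \in Y \subset Z$ is already in the target space, the only object to analyse is $a_\eps \ast U$, and this has to be carried out at the two weights $\eta=\eta_+$ and $\eta=-\eta_-$ separately; both are strictly inside the open interval $(-\Breve{\eta}_-,\Breve{\eta}_+)$ by the definition of $\Breve{\eta}_\pm$, so Proposition \ref{prop:weight} is applicable.

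First I would establish $F_2:\Omega\to Z$. Lemma \ref{lem:superpos} gives $U \in Z_0$, and then Proposition \ref{prop:weight}(1), applied with weights $\eta_+$ and $-\eta_-$, immediately produces $a_\eps\ast U \in Z$ with a bound uniform in $\eps\in(0,\eps_0]$. Next I would establish continuity in $W$: the proof of Lemma \ref{lem:superpos} actually yields the pointwise Lipschitz bound $|\Phi^\prime(R_0+W_1)-\Phi^\prime(R_0+W_2)|\le C|W_1-W_2|$ on the range of $R_0+\Omega$, hence $\|U(W_1)-U(W_2)\|_{Z_0}\le C\|W_1-W_2\|_Y$; composing with Proposition \ref{prop:weight}(1) gives continuity of $F_2(\eps,\cdot)$ on $\Omega$ uniformly in $\eps$. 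For continuity in $\eps$ at $\eps=0$ (with $W$ fixed) the difference is $F_2(\eps,W) - F_2(0,W) = (a_0 - a_\eps)\ast U$, and after the shift \eqref{eqn:tildeaFour} Plancherel turns the $\fspaceH^1_\eta$-norm into
\begin{equation*}
\int_{\Rset} \left|\hat{a}_\eps\!\bigl(k + \tfrac{\iu \eta}{2\pi}\bigr) - \hat{a}_0\!\bigl(k + \tfrac{\iu \eta}{2\pi}\bigr)\right|^{2} (1+4\pi^{2}k^{2})\, |\hat{\tilde{U}}(k)|^{2}\, \dint{k}.
\end{equation*}
By \eqref{eqn:fourlinftyest} the factor $|\hat{a}_\eps-\hat{a}_0|^{2}(1+|k|^{2})$ is bounded on the strip uniformly in $\eps$, so the integrand is dominated by $C|\hat{\tilde{U}}(k)|^{2}\in\fspaceL^{1}$; since \eqref{eqn:fourerror} forces pointwise convergence of the symbol difference to $0$, dominated convergence finishes the job.

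The step I expect to be the main obstacle is precisely this last one: the natural reflex is to invoke Proposition \ref{prop:weight}(2), which would give an $\DO{\eps}$ rate, but this requires $U\in \fspaceH^{1}_\eta$, not merely $U\in Z_0$. Under the sole hypothesis $\Phi\in\fspaceC^{2,\beta}$ that covers the Hertzian case, differentiating $U$ produces the term $[\Phi^{\prime\prime}(R_0+W)-\Phi^{\prime\prime}(R_0)]R_0^\prime$, and $\Phi^{\prime\prime}$ is only H\"older in its argument, so this term sits in no better space than a weighted $\fspaceL^{2}$ after some work on the tails of $R_0^\prime$. The dominated-convergence argument above circumvents this by trading the quantitative rate for qualitative continuity, which is still exactly what Theorem \ref{thm:IFT} asks for. (A quantitative bound, if needed later for \eqref{H1est}, would come instead from Proposition \ref{prop:weight}(3) applied to $U$ in a fractional weighted Sobolev space, the regularity budget $\beta$ being the reason fractional spaces appear at all in this paper.)
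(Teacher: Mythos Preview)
Your argument is correct. For well-definedness and continuity in $W$ you follow exactly the paper's route (Lemma~\ref{lem:superpos} plus Proposition~\ref{prop:weight}(1)). For continuity in $\eps$ you diverge: the paper simply cites Proposition~\ref{prop:weight}(2), whereas you use a dominated-convergence argument on the Fourier side that only needs $U\in Z_0$.

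Your observation about the obstacle is accurate but slightly overstated. Proposition~\ref{prop:weight}(2) does require $U\in Z=\fspaceH^1_{\eta_+}\cap\fspaceH^1_{-\eta_-}$, which Lemma~\ref{lem:superpos} does not supply. However, your own remark already contains the fix: the derivative $U'=\Phi''(R_0+W)(R_0'+W')-\Phi''(R_0)R_0'$ is bounded pointwise by $C(|R_0'|+|W'|)$ since $\Phi''$ is bounded on the relevant range, and both $R_0'$ and $W'$ lie in $Z_0$ (for $R_0'$ this is a direct ODE check using $\eta_+<1-p_+$ and $\eta_-<p_--1$). Hence $U\in Z$ and the paper's one-line invocation of Proposition~\ref{prop:weight}(2) goes through, with the bonus of the $O(\eps)$ rate. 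Your DCT argument is a perfectly good alternative that is more self-contained relative to what Lemma~\ref{lem:superpos} actually states, at the price of giving only qualitative continuity---which, as you note, is all Theorem~\ref{thm:IFT} requires here.
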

\begin{proof} 
 Continuity with respect to $W$ follows from the previous lemma \ref{lem:superpos} and Proposition \ref{prop:weight}, part (1). Then continuity with 
 respect to $\eps$ follows by Proposition \ref{prop:weight} part (2).
\end{proof}
\begin{lemma} \label{lem:F=0}
$F(0,0)=0.$
\end{lemma}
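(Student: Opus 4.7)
The identity $F(0,0)=0$ reduces to $R_0 = a_0 \ast \Phi'(R_0)$, since substituting $W=0$ and $\eps=0$ into \eqref{eq:F} leaves $R_0 - a_0 \ast \Phi'(R_0)$. The plan is to verify this identity by a direct pointwise computation, using the explicit shape $a_0(x) = \mhexp{-x}$ for $x \geq 0$ (and zero otherwise) given in the displayed formula for $a_0$, together with the defining ODE \eqref{eqn:R0}.

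First I would set $g(x) := [a_0 \ast \Phi'(R_0)](x) = \int_{-\infty}^{x} \mhexp{-(x-y)}\, \Phi'(R_0(y))\,\ddiff y$. Since $\Phi'(R_0)$ is bounded on $\Rset$ with limits $\Phi'(1)=1$ at $-\infty$ and $\Phi'(0)=0$ at $+\infty$, the integral is absolutely convergent and $g$ is a bounded $\fspaceC^1$ function. Differentiating under the integral gives $g' = -g + \Phi'(R_0)$, i.e.\ $g$ satisfies the same scalar linear ODE $g' + g = \Phi'(R_0)$ that $R_0$ does by \eqref{eqn:R0}. Hence the difference $h := R_0 - g$ solves the homogeneous equation $h' + h = 0$, so $h(x) = C\,\mhexp{-x}$ for some $C \in \Rset$.

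To pin down $C$, I would match boundary values at $-\infty$. The condition \eqref{eqn:R0bc} gives $R_0(-\infty) = 1$; on the other hand, after the substitution $u = x-y$ one has $g(x) = \int_0^\infty \mhexp{-u}\, \Phi'(R_0(x-u))\,\ddiff u$, and dominated convergence (with integrable dominant $\mhexp{-u}\|\Phi'(R_0)\|_{\fspaceL^\infty}$) yields $g(x) \to \Phi'(1) = 1$ as $x \to -\infty$. Therefore $h(-\infty) = 0$, which forces $C = 0$ and hence $R_0 = g$. The argument is entirely elementary; I do not foresee any real obstacle beyond noting that $R_0 - 1 \notin \fspaceL^2(\Rset)$ at $-\infty$ precludes a naive global Fourier verification via the relation $\hat{a}_0(k)(1+2\pi \iu k) = 1$, but the pointwise ODE-matching argument above circumvents this issue without difficulty.
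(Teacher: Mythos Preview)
Your proof is correct and follows essentially the same approach as the paper: both reduce $F(0,0)=0$ to the identity $R_0 = a_0\ast\Phi'(R_0)$ and verify it via the ODE \eqref{eqn:R0}. The paper simply asserts that the integral equation and the ODE are equivalent, whereas you carefully justify the nontrivial direction by matching boundary data at $-\infty$ to rule out the homogeneous solution $C\mhexp{-x}$; this extra care is warranted, since the ODE alone only determines $R_0$ up to such a term.
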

\begin{proof}
For $\eps=0$, we have
\begin{align*}
F(0,0) & = F_1(0)=  R_0- a_0 \ast \Phi^\prime\at{R_0} =0  \end{align*}
as 
 \begin{align*}  R_0 & = a_0 \ast \Phi^\prime\at{R_0} \end{align*}
is equivalent to
 \begin{align*}   R_0+ R_0^\prime & =\Phi^\prime\at{R_0},\end{align*}
 which is satisfied by $R_0$ as in \eqref{eqn:R0}. 
\end{proof}

\subsection{Differentiability}

\begin{lemma} \label{lem:FisFrech}
The function  $F: \Omega \rightarrow Z$ is   Fr\'echet differentiable with respect to $W$. 
The derivative  given by
    \begin{align*}
            D_2 F(\eps,W): Y & \rightarrow Z \,, \\
         D_2 F(\eps,W) V&= V - a_\eps \ast (\Phi^{\prime\prime}\at{R_0+W} V)\,,
          \end{align*}
      is  continuous at $(\eps,W)=(0,0)$.
\end{lemma}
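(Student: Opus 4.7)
My plan is to establish Fréchet differentiability of $F$ in $W$ by a first-order Taylor expansion of $\Phi^\prime$ whose remainder is controlled through the $C^\beta$ regularity of $\Phi^{\prime\prime}$, and then to verify continuity of $D_2F$ at $(0,0)$ by splitting the difference $D_2F(\eps,W) - D_2F(0,0)$ into an $\eps$-dependent and a $W$-dependent contribution. Throughout, the convolution bounds of Proposition \ref{prop:weight} will do the main work, and the embedding $Y\hookrightarrow \fspaceL^\infty(\Rset)$ (which holds because elements of $Y$ decay exponentially on both sides) will allow me to transfer pointwise Hölder bounds into the weighted $\fspaceL_\eta^2$-spaces.

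For the derivative, I would write, for $V \in Y$ of small norm,
\begin{equation*}
F(\eps,W+V) - F(\eps,W) - \bigl[V - a_\eps \ast (\Phi^{\prime\prime}(R_0+W)V)\bigr] = -\,a_\eps \ast \calR(V),
\end{equation*}
with $\calR(V) = \int_0^1 [\Phi^{\prime\prime}(R_0+W+tV) - \Phi^{\prime\prime}(R_0+W)]\,V \,\dint{t}$. The $C^\beta$ regularity of $\Phi^{\prime\prime}$ yields the pointwise bound $|\calR(V)| \leq C|V|^{1+\beta}$, so for each weight $\eta \in \{\eta_+,-\eta_-\}$ one has $\|\calR(V)\|_{\fspaceL_\eta^2} \leq C\|V\|_{\fspaceL^\infty}^\beta \|V\|_{\fspaceL_\eta^2} \leq C\|V\|_Y^{1+\beta}$. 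Proposition \ref{prop:weight}(1) lifts this to $\|a_\eps \ast \calR(V)\|_Z \leq C\|V\|_Y^{1+\beta} = o(\|V\|_Y)$, proving the claimed derivative. The same proposition shows $D_2F(\eps,W) \in B(Y,Z)$, since $\Phi^{\prime\prime}(R_0+W)V \in Z_0$ whenever $V \in Y$.

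For continuity at $(0,0)$, the natural splitting is
\begin{equation*}
[D_2F(\eps,W) - D_2F(0,0)] V = -(a_\eps - a_0)\ast(\Phi^{\prime\prime}(R_0)V) - a_\eps \ast \bigl[(\Phi^{\prime\prime}(R_0+W) - \Phi^{\prime\prime}(R_0))V\bigr].
\end{equation*}
The second term is straightforward: Hölder continuity of $\Phi^{\prime\prime}$ together with $\|W\|_{\fspaceL^\infty}\leq C\|W\|_Y$ gives $\|\Phi^{\prime\prime}(R_0+W)-\Phi^{\prime\prime}(R_0)\|_{\fspaceL^\infty} \leq C\|W\|_Y^\beta$, so the bracket has $\fspaceL_\eta^2$-norm at most $C\|W\|_Y^\beta\|V\|_Y$, and Proposition \ref{prop:weight}(1) converts this into a $Z$-bound of the same order.

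The main obstacle is the first term. Applying Proposition \ref{prop:weight}(2) would require $\Phi^{\prime\prime}(R_0) V \in \fspaceH_\eta^1$, but computing its derivative would involve $\Phi^{\prime\prime\prime}$, which does not exist classically under the bare $\fspaceC^{2,\beta}$ hypothesis (and in particular fails in the Hertzian case). The remedy is to invoke Proposition \ref{prop:weight}(3) at a fractional level $s \in (0,\beta)$: it suffices to bound $\Phi^{\prime\prime}(R_0) V$ in $\fspaceH_\eta^s$, and since $\Phi^{\prime\prime}(R_0)$ is bounded and $C^\beta$, a Kato--Ponce type multiplier estimate in the weighted fractional Sobolev space yields $\|\Phi^{\prime\prime}(R_0) V\|_{\fspaceH_\eta^s} \leq C\|V\|_Y$. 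Hence $\|(a_\eps - a_0)\ast(\Phi^{\prime\prime}(R_0)V)\|_Z \leq C\eps^s\|V\|_Y$, and combining the two contributions gives $\|D_2F(\eps,W) - D_2F(0,0)\|_{B(Y,Z)} \leq C(\eps^s + \|W\|_Y^\beta) \to 0$ as $(\eps,W) \to (0,0)$.
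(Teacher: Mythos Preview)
Your proposal is correct and follows essentially the same route as the paper: Taylor expansion with the $C^\beta$ bound on $\Phi^{\prime\prime}$ gives the derivative via Proposition~\ref{prop:weight}(1), and for continuity at $(0,0)$ you make the same splitting into an $\eps$-part and a $W$-part, handling the former via the fractional estimate of Proposition~\ref{prop:weight}(3) together with Kato--Ponce. One small caveat: the paper takes $s<\beta/2$ rather than $s<\beta$, because the pointwise bound on $|D^s(\Phi^{\prime\prime}(R_0))|$ they use (via the singular-integral representation of the fractional Laplacian) requires the near-diagonal integral $\int_{|x-y|\leq 1}|x-y|^{\beta}/|x-y|^{1+2s}\,dy$ to converge; you should impose the same restriction.
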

\begin{proof}
We first study
the Fr\'echet differentiability of 
\begin{align*}
    W \mapsto H(W)=  \Phi^\prime\at{R_0+W}-\Phi^\prime\at{R_0}
\end{align*}
as a  map $\Omega \to Z_0$. 
We aim to show that its derivative is given by 
\begin{align*}D H(W) V= \Phi^{\prime\prime}\at{R_0+W} V.\end{align*} Hence
we need to prove that
\begin{align*}
    U:= \Phi^\prime\at{R_0+W+V}-\Phi^\prime\at{R_0+W} - \Phi^{\prime\prime}\at{R_0+W} V
\end{align*}
satisfies 
\begin{align}\label{eqn:Fclaim}
    \|U\|_{Z_0} \in \Do{\|V\|_Y}.
\end{align}
For that we write $\tilde{U}(x)= \exp(\eta x) U (x)$ for $\eta=-\eta_-, \eta_+$. Without restriction $\|V\|_Y \leq 1$ which implies that the range of $R_0(.)+W(.) + \theta(.) V(.)  $ is contained in a fixed compact interval $I$ for any choice of $\theta(.) \in [0,1]$.  Then using H\"older continuity of $\Phi^{\prime\prime}$ on $I$ and a pointwise Taylor expansion we obtain with 
\begin{align}
\tilde{U}&:=\exp( \eta .) 
    \bigl(\Phi^\prime\at{R_0+W+V}-\Phi^\prime\at{R_0+W} - \Phi^{\prime\prime}\at{R_0+W} V\bigr)  \nn  \mbox{ such that} \\
    \|\tilde{U}\|_{\fspaceL^2}&=\| \exp( \eta .) 
    \bigl(\Phi^\prime\at{R_0+W+V}-\Phi^\prime\at{R_0+W} - \Phi^{\prime\prime}\at{R_0+W} V\bigr) 
    \|_{\fspaceL^2} \nn \\ & \leq \| \exp( \eta .)(
    \Phi^{\prime\prime}\at{R_0(.)+W(.)+ \theta(.) V(.)} V(.)- \Phi^{\prime\prime}\at{R_0(.)+W(.)} V(.)\bigr)    \|_{\fspaceL^2} \nn\\
     & \leq C  \|V\|_Y^\beta  \| V\|_Y=\Do{ \|V\|_Y} ,  \label{eqn:Ufinal}
\end{align}
which yields the desired estimate \eqref{eqn:Fclaim}. Hence $F$ is Fr\'echet differentiable with respect to $W$ using Proposition \ref{prop:weight}.

By similar arguments as in \eqref{eqn:Ufinal} 
$W \mapsto DH(W)\in B(Y,Z_0) $ is  continuous in $W$, such that $D_2 F(\eps,W)$  is continuous in $B(Y,Z)$ by Proposition \ref{prop:weight}. 

To show continuity of the derivative with respect to $\eps$ at $\eps=0$ and $W=0$,  we will use \eqref{eqn:expaeps2}. We need to estimate the fractional Sobolev norm $\|\Phi^{\prime\prime}( R_0) V\|_{\fspaceH^s_\eta(\Rset)}$ where we choose  $0<s<\beta/2<1$, where $\beta$ is the H\"older exponent of the  $\Phi^{\prime\prime}$. Some necessary details about the underlying unweighted spaces can be found in \cite{NPV12}. So we need to estimate  $\|\exp( \eta .)\Phi^{\prime\prime}( R_0) V \|_{\fspaceH^s}$ for  $\eta=-\eta_-, \eta_+$. We use the Kato-Ponce inequality \cite{GO14} to estimate  
\begin{align}\nn
   & \|D^s(\exp( \eta .)\Phi^{\prime\prime}( R_0) V )\|_{\fspaceL^2}\\
   \leq & C \left(\|D^s\Phi^{\prime\prime}( R_0)\|_{\fspaceL^\infty} \| \exp( \eta .) V(.) \|_{\fspaceL^2}
   +\|\Phi^{\prime\prime}( R_0)\|_{\fspaceL^\infty} \|D^s(\exp( \eta .) V(.) )\|_{\fspaceL^2} \right). \label{eqn:KP}
\end{align}
For $V \in Y$ with $\|V \|_Y \leq 1$, we obtain uniform bounds on $\| \exp( \eta .) V(.) \|_{\fspaceL^2} \leq \|V \|_Y  $ and $\|D^s(\exp( \eta .) V(.) )\|_{\fspaceL^2} \leq \| \exp( \eta .) V(.) \|_{\fspaceH^1} \leq \| V\|_Y $. As above we obtain bounds 
 on $\|\Phi^{\prime\prime}( R_0)\|_{\fspaceL^\infty}$ due to the boundedness of $R_0$ and the continuity of $\Phi^{\prime\prime}$ on the image of $R_0$. It remains to estimate the fractional derivative of $\Phi^{\prime\prime}( R_0)$, we use the H\"older continuity of $\Phi^{\prime\prime}$ to get a uniform bound of the form  $|\Phi^{\prime\prime}( R_0(x))- \Phi^{\prime\prime}( R_0(y)) |\leq C_\beta |x-y|^\beta$. By \cite[Prop. 3.4]{NPV12} we can use an equivalent representation of the fractional derivative: 
 \begin{align*}
     |D^s (\Phi^{\prime\prime}( R_0))(x)|  \leq & C 
     \int_\Rset \left| \frac{\Phi^{\prime\prime}( R_0(x))- \Phi^{\prime\prime}( R_0(y)) }{|x-y|^{1+2s}}\right|\dint{y} \\
      \leq & C \int_{|x-y|\leq 1} \frac{|\Phi^{\prime\prime}( R_0(x))- \Phi^{\prime\prime}( R_0(y)) |}{|x-y|^{1+2s}}\dint{y}  \\ & \quad + C  \int_{|x-y| >  1} \frac{|\Phi^{\prime\prime}( R_0(x))- \Phi^{\prime\prime}( R_0(y)) |}{|x-y|^{1+2s}}\dint{y} \\
      \leq & C \int_{|x-y|\leq 1} \frac{C_\beta |x-y|^\beta}{|x-y|^{1+2s}}\dint{y}   + C  \int_{|x-y| >  1} \frac{2\|\Phi^{\prime\prime}( R_0)\|_{\fspaceL^\infty}}{|x-y|^{1+2s}}\dint{y} < \infty,
 \end{align*}
 as $1+2s -\beta<1$ for the first integral and $1+2s>1$ for the second. This implies that the expression in \eqref{eqn:KP} is bounded.  With part (3) of Proposition \ref{prop:weight} we obtain continuity of $D_2 F(\eps,W)$ at $(0,0)$.
\end{proof}

\begin{lemma}\label{lem:inv} The operator $L V= V- a_0 \ast (\Phi^{\prime\prime}\at{R_0} V) $ has a bounded inverse in $B(Z,Y).$
\end{lemma}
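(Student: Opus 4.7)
The plan is to reduce the equation $LV=g$ to an explicit first-order ODE and solve it by variation of parameters. Since $a_0\ast$ is by construction a right inverse of $1+\partial_x$, and since a Fourier argument (analogous to the proof of Proposition~\ref{prop:symbol}) shows that $1+\partial_x : \fspaceH^1_\eta(\Rset) \to \fspaceL^2_\eta(\Rset)$ is an isomorphism for every $\eta\neq 1$ (its shifted symbol is $(1-\eta)+2\pi\iu k$, bounded away from $0$ on the real line), applying $1+\partial_x$ to $LV=g$ gives the equivalent problem
\begin{equation*}
V' + (1-P)V = g + g',
\end{equation*}
whose right-hand side lies in $\fspaceL^2_{\eta_+}(\Rset)\cap \fspaceL^2_{-\eta_-}(\Rset)$ when $g\in Z$.

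Next I would introduce the homogeneous solution
$V_h(x) = \exp\bigl(-\int_0^x (1-P(s))\,ds\bigr)$,
which satisfies $V_h(0)=1$. Differentiating the limit equation \eqref{eqn:R0} shows $R_0'' = (P-1)R_0'$, so that $V_h = R_0'/R_0'(0)$; this is well defined because $R_0$ is strictly monotone. The strict convexity of $\Phi'$ together with $\Phi'(0)=0$, $\Phi'(1)=1$ forces $p_+<1<p_-$, so $V_h$ has exact exponential decay rates $1-p_+$ at $+\infty$ and $p_--1$ at $-\infty$. The unique solution of the ODE that satisfies the phase condition $V(0)=0$ is
\begin{equation*}
V(x) = V_h(x)\int_0^x V_h(y)^{-1}\bigl(g(y)+g'(y)\bigr)\,dy.
\end{equation*}

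The main task is to verify $V\in Y$ together with the bound $\|V\|_Y\lesssim \|g\|_Z$. After multiplication by $e^{\eta_+ x}$ the representation at $+\infty$ becomes, up to factors that are bounded above and below by the asymptotics of $V_h$, a convolution of $\tilde h := e^{\eta_+\cdot}(g+g') \in \fspaceL^2(\Rset)$ with the kernel $\mathbf{1}_{u>0}\,e^{-\alpha u}$ where $\alpha=1-p_+-\eta_+>0$; Young's inequality then gives the required $\fspaceL^2$ control on $(0,\infty)$. An analogous computation on $(-\infty,0)$, after the substitution $x\mapsto -x$, uses $\beta = p_--1-\eta_->0$. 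The $\fspaceH^1$-part of the norm is immediate from the ODE $V' = (g+g')-(1-P)V$ since $P$ is bounded. Injectivity of $L\vert_Y$ is automatic because any element of $\ker L\vert_Z$ is a scalar multiple of $V_h$, and $V_h(0)=1$ excludes nontrivial elements of $Y$; hence the constructed operator $g\mapsto V$ is the required bounded inverse in $B(Z,Y)$.

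The main obstacle is the sharp compatibility of the weights with the asymptotic decay of $V_h$: the Young-type convolution bound closes precisely because the hypotheses $\eta_+<1-p_+$ and $-\eta_->1-p_-$ (the same intervals that appear in Proposition~\ref{prop:symbol}) guarantee $\alpha,\beta>0$, making both one-sided Green's function kernels integrable after the exponential substitution. Without these strict inequalities the variation-of-parameters integral would fail to produce a function in the weighted spaces, which is the conceptual reason for the choice of $Z$ and $Y$ in Section~\ref{sec:IFT}.
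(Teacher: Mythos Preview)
Your proposal is correct and follows essentially the same route as the paper: both convert $LV=g$ into the first-order ODE $V'+(1-P)V=g+g'$ with $V(0)=0$, solve it by Duhamel/variation of parameters, and then estimate the weighted $\fspaceL^2$ norms using that $\int_0^{\pm\infty}|P-p_\pm|<\infty$ together with the strict inequalities $\eta_+<1-p_+$ and $\eta_-<p_--1$. The only differences are stylistic: you invoke Young's inequality on the one-sided exponential kernel, whereas the paper bounds the double integral directly via Cauchy--Schwarz and Fubini; and your identification $V_h=R_0'/R_0'(0)$ (obtained by differentiating \eqref{eqn:R0}) is a nice observation that the paper does not make explicit but that yields the same asymptotics.
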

\begin{proof}
    For $g \in Z$ we look for $V\in Y$ such that $LV=g$, this is equivalent to the ODE
    \begin{align}
        \label{eqn:ODE} V^\prime(x)=& -V(x)+ \Phi^{\prime\prime}(R_0(x)) V(x)+f(x)\\
        V(0)=&0 \nn
    \end{align}
    where the initial condition is due the requirement that $V \in Y $. The unique solution of \eqref{eqn:ODE} is given by 
    \begin{align}\label{eqn:ODEsol}
        V(x) = & \int_0^x \exp\Bigl(\int_s^x(-1+ \Phi^{\prime\prime}(R_0(\tau)) \dint{\tau}  \Bigr) f(s) \dint{s}.
    \end{align}
    The main task is to check that \eqref{eqn:ODEsol} defines an operator in $B(Z,Y)$.   We consider $\eta_+$ and integration for $x\geq 0$.
We first note that as $R_0 \to 0$ exponentially by ODE theory and as $\Phi^{\prime\prime}(0)=p_+$ we have
\begin{align*}
    0< \int_0^\infty (\Phi^{\prime\prime}(R_0(\tau))-p_+) \dint{\tau} < \frac{M}{2}.
\end{align*}
Hence we can start bounding parts of the $\fspaceL^2_\eta$-norm
 \begin{align*}
 &\int_0^\infty \exp(2\eta_+ x) |V(x)|^2 \dint{x}\\
     =&\int_0^\infty \exp( 2 \eta_+ x) \left( \int_0^x \exp\Bigl(\int_s^x(-1+ \Phi^{\prime\prime}(R_0(\tau)) \dint{\tau}  \Bigr) f(s) \dint{s} \right)^2\dint{x}\\
     =&\int_0^\infty \left( \int_0^x \exp\Bigl(\int_s^x(\eta_+ -(1 -p_+)  + (\Phi^{\prime\prime}(R_0(\tau))-p_+) \dint{\tau}  \Bigr) \exp(\eta_+ s) f(s) \dint{s} \right)^2\dint{x} \\ \leq& \exp(M)  
     \int_0^\infty \left( \int_0^x \exp\bigl( (\eta_+ -(1 -p_+)) (x-s) \bigr) \exp(\eta_+ s) f(s) \dint{s} \right)^2\dint{x}     
     \\=&\exp(M)  
     \int_0^\infty \left( \int_0^x \exp( -\delta (x-s))  \exp\bigl( (\eta_+ -(1 -p_+)+\delta) (x-s) \bigr) \exp(\eta_+ s) f(s) \dint{s} \right)^2\!\!\!\dint{x}. 
\end{align*}
 Then we  use the Cauchy-Schwarz for the inner integral, where $0<\delta< (1-p_+)- \eta_+$, and Fubini's Theorem, to estimate 
  \begin{align*}
     \leq & \exp(M)  
     \int_0^\infty \left( \int_0^x \exp( -2 \delta (x-s)) \dint{s}  \right)\\& \qquad \qquad \times  \left(\int_0^x \exp\bigl( 2(\eta_+ -(1 -p_+)+\delta) (x-s) \bigr) \exp(2\eta_+ s) f^2(s) \dint{s} \right) \dint{x}   \\ \leq& \frac{\exp(M)}{2\delta }    \int_0^\infty   \int_s^\infty  \exp\bigl( 2(\eta_+ -(1 -p_+)+\delta) (x-s) \bigr) \exp(2\eta_+ s) f^2(s)  \dint{x} \dint{s}
     \\=& \frac{\exp(M)}{2\delta }   \int_0^\infty   \int_s^\infty  \exp\bigl( 2(\eta_+ -(1 -p_+)+\delta) (x-s) \bigr) \dint{x} \exp(2\eta_+ s) f^2(s) \dint{s} \\ =&  \frac{\exp(M)}{2\delta (2(\eta_+ -(1 -p_+)+\delta))} \int_0^\infty  \exp(2\eta_+ s) f^2(s) \dint{s} \leq C 
      \|f \|^2_{\fspaceH^1_{\eta_+}}.   \end{align*}
The integrals for $\eta_-$ and $x<0$ follow analogously. Due to $\sup_{x \in \Rset} |\Phi^{\prime\prime}(R_0(x)) | <\infty $ and \eqref{eqn:ODE} we also obtain the same weighted $\fspaceL^2$ estimates for $V'$. Combining all of these yields the desired bound 
\begin{align}
    \label{eqn:invest}
    \|V\|_Y \leq C \|f\|_Z,
\end{align}
such that $L$ has a bounded inverse.
\end{proof}

\subsection{Existence of fronts}\label{subsec:fronts}

Now we can complete the existence part of Main Theorem. 
\begin{proof}[Proof of existence part of Theorem \ref{existthm}] 
The existence part of Theorem \ref{existthm} follows from the Implicit Function Theorem.  The operator $F$ as \eqref{eq:Fsep} satisfies assumptions Theorem \ref{thm:IFT}: The mapping properties and the continuity assumption (1) follow from Lemmas \ref{lem:F1} and \ref{lem:F2}. Condition (2) at the point 
$(0,0)$ follows by Lemma \ref{lem:F=0}, condition (3) by Lemma \ref{lem:FisFrech} and condition (4) by Lemma \ref{lem:inv}. Then the operator $G$ in Theorem \ref{thm:IFT} yields $R_\eps= R_0+ G(\eps)$. With 
$G(\eps)\in \fspaceH^1_{\eta_+} \cap \fspaceH^1_{-\eta_-}$, this implies non-optimal exponential decay of 
$|R_\eps -r_{\pm}|$ for $x \to \pm \infty$.

The implicit function argument only gives continuity of $R_\eps$ with respect to $\eps$. Inspecting them as fixed points gives the quantitative estimate by subtracting $R_0= a_0 \ast \Phi'(R_0)$ from \eqref{eq:fixedpt}: 
\begin{align*}
    R_\eps- R_0 =& a_\eps \ast \Phi^\prime(R_\eps)-a_0 \ast \Phi^\prime(R_0)\\
    =& a_0 \ast(\Phi^\prime(R_\eps)-\Phi^\prime(R_0))+   (a_\eps- a_0) \ast \Phi'(R_\eps).
\end{align*}
By expressing $(\Phi^\prime(R_\eps)-\Phi'(R_0))(.) = \Phi^{\prime\prime}\at{R_0(.)+\theta(.) (R_\eps-R_0)(.)} (R_\eps(.)-R_0(.))$ and denoting $f_\eps= (a_\eps- a_0) \ast \Phi'(R_\eps)$, this is equivalent to the ODE
\begin{align}
    \label{eqn:ODEdifference}
    (R_\eps- R_0)'(x)+ \bigl(1 - \Phi^{\prime\prime}\at{R_0(x)+\theta(x) (R_\eps-R_0)(x)}\bigr) (R_\eps-R_0)(x) = f_\eps(x).
\end{align}
This has the same structure as \eqref{eqn:ODE} in the proof of Lemma \ref{lem:inv}.  We  observe that Proposition \ref{prop:weight} (2) implies $\|f_\eps\|_{\fspaceH^1} \leq C \eps$.  
The desired estimate \eqref{H1est} then follows by 
the same argument as in the proof of Lemma \ref{lem:inv}.
\end{proof}

The monotonicity and decay properties in Theorem \ref{existthm} require a more careful analysis, which we will provide in the next section.

\section{Optimal decay rates}
In this section we denote by $R_\eps$ the nonlinear wave from Theorem \ref{existthm} that connects the states $1$ and $0$. The function
\begin{align*}
P_\eps:=\Phi^{\prime\prime}\at{R_\eps}\,,
\end{align*}
attains the asymptotic values 
\begin{align*}
P_\eps\at{x}\quad \xrightarrow{\;x\to\pm\infty\;}\quad p_\pm, 
\end{align*}
see \eqref{AsympStatesP} which satisfy
\begin{align*}
0\leq p^+<1<p^-<\infty
\end{align*}
due to the strict convexity of $\Phi^\prime$ on $[0,1]$. 
Moreover, our non-optimal decay results from \S\ref{subsec:fronts} together with the H\"older continuity of $\Phi^{\prime\prime}$ imply the existence of positive constants $C_0^\pm>0$  --- which do not depend on $\eps$ --- such that
\begin{align}
\label{AP.DecayPminus}
\babs{P_\eps\at{x}-p^-} &\leq C|R_\eps(x) -1|^\beta \leq C_0^-\exp\at{\beta\eta^-x}\\ \label{AP.DecayP}
\babs{P_\eps\at{x}-p^+} & \leq C|R_\eps(x) -0|^\beta \leq  C_0^+\exp\at{-\,\beta\eta^+ x}
\end{align}
holds for all $x\in\Rset$ provided that $\eps$ is sufficiently small. We recall that  $\eta^\pm$ are chosen arbitrarily in the open intervals
\begin{align*}
\eta^-\in\bpair{0}{p^--1}\,,\qquad \eta^+\in\bpair{0}{1-p^+}.
\end{align*}
%
%
%
We further consider the function
\begin{align}
\label{AP.DefS}
S_\eps\at{x}:=-\tfrac{\dint}{\dint{x}}\,R_\eps\at{x}
\end{align}
which fulfills
\begin{align}
\label{AP.NormalS}
\int\limits_\Rset S_\eps\at{x}\dint{x}=1\,,
\qquad S_\eps\at{0}\quad\xrightarrow{\;\;\eps\to0\;\;}\quad S_0\at{0}=-R^\prime_0\at{0}>0
\end{align}
(thanks to \eqref{eqn:ODEdifference})
as well as the linear integral equation
\begin{align}
\label{AP.Eqn1}
S_\eps=a_\eps\ast\bat{P_\eps\,S_\eps}.
\end{align}
Our goal is to prove that $S_\eps$ is nonnegative and decays with certain decay rates for all sufficiently small $\eps>0$.  We 
expect that $S_\eps$ decays like
\begin{align}
\label{AP.Decay1}
S_\eps\at{x}\sim\exp\at{+\mu_\eps^-\,x}\quad\text{for}\quad x<0
\end{align}
and
\begin{align}
\label{AP.Decay2}
S_\eps\at{x}\sim\exp\at{-\mu_\eps^+\,x}\quad\text{for}\quad x>0\,,
\end{align}
where $\pm \frac{i}{2\pi}\,\mu_\eps^\pm$ are bounded roots of $\hat{a}_\eps^{-1}-p_\pm$ as $\eps \to 0$.   
In particular, the expected decay rates for $\eps>0$ are not given by  $p^-$ and $p^+$, which describe the tail decay of the solutions to the asymptotic ODE \eqref{normcontlim},  but the nonlocal terms in \eqref{norminteqtw} give rise to corrections terms of order $O\at{\eps^2}$ as 
specified below in \eqref{Lem.AP.ModKernels.E3} and  \eqref{Lem.AP.ModKernels.E4}.

\subsection{Auxiliary equations}

The analysis of the decay rates at $\pm \infty$ can be separated from each other by adapting the operators. The key observation in this context is that \eqref{AP.Eqn1} can also be written as
\begin{align}
\label{AP.Eqn23}
S_\eps={a}^+_\eps\ast\bat{\at{P_\eps-p_+}S_\eps}
\qquad \text{and}\qquad S_\eps=-{a}^-_\eps\ast\bat{\at{P_\eps-p_-}S_\eps}
\end{align}
%
where the modified convolution kernels $a_\eps^-$ and $a_\eps^+$ correspond to the symbol functions
\begin{align}
\label{AP.DefModKernels}
\hat{a}_\eps^\pm\at{k}
=
 \frac{\pm 1}{\hat{a}_\eps^{-1}-p^{\pm}}
=\frac{\pm\sinc^2\at{\eps\,
\pi\,k}}{1-p_\pm \sinc^2\at{\eps\,\pi\,k}+2\,\pi\,\iu\,k\,\sinc^2\at{\eps\,\pi\,k}}\,
\end{align}
with poles at $k=\pm \frac{i}{2\pi}\,\mu_\eps^\pm$
(see Proposition \ref{Lem.AP.ModKernels} below).
The numerator in \eqref{AP.DefModKernels} decays quadratically while the denominator has no real zeros and is asymptotically constant. In particular, the convolution kernel $a_\eps^-$ and $a_\eps^+$ are well-defined, sufficiently smooth and satisfy
\begin{align*}
\hat{a}_\eps^\pm\at{k}\quad\xrightarrow{\;\;\eps\to0\;\;} \quad \hat{a}_0^\pm\at{k}=\frac{\pm1}{\;1-p^\pm+2\,\pi\,\iu\,k\;}
\end{align*}
where the pointwise limit  functions correspond to the nonnegative but discontinuous convolution kernels
\begin{align*}
a_0^-\at{x}=\left\{\begin{array}{cc}\mhexp{\at{p^--1}\,x}&\text{for $x\leq0$\,,}\\
0&\text{for  $x>0$\,,}\end{array}\right.\qquad
a_0^+\at{x}=\left\{\begin{array}{cc}0&\text{for $x<0$\,,}\\
\mhexp{-\at{1-p^+}\,x}&\text{for  $x\geq0$\,.}\end{array}\right.
\end{align*}
\begin{proposition}[properties of the modified kernels]
\label{Lem.AP.ModKernels}
For any sufficiently small $\eps$ exists positive real number $\mu_\eps^-$, $\mu_\eps^+$ and
$\nu_\eps^-$, $\nu_\eps^+$
with
\begin{align}
\label{Lem.AP.ModKernels.E3}
\mu_\eps^\pm=\pm \at{1-p^\pm}\mp\eps^2\,\tfrac{1}{12}\,(1+p^\pm)\,\at{1-p^\pm}^2+O\at{\eps^4}
\end{align}
and
\begin{align}
\label{Lem.AP.ModKernels.E4}
\nu_\eps^\pm= 1 -\eps^2\,\tfrac{1}{12}\,\at{1+ p^\pm}\,\at{1-\,p^\pm}+O\at{\eps^4}
\end{align}
such that the representation formulas
\begin{align}
\label{Lem.AP.ModKernels.E2}
\hat{a}_\eps^\pm\at{k}=\hat{b}_\eps^\pm\at{k}+\hat{c}_\eps^\pm\at{k}\,,\qquad \quad
\hat{b}_\eps^\pm\at{k}:=\frac{\pm\nu_\eps^\pm}{\pm\mu_\eps^\pm+2\,\pi\,\iu\,k}
\end{align}
hold for all $k\in\Rset$. Here, the symbol functions  $\hat{c}_\eps^\pm$ are holomorphic in the open complex strip  
\begin{align*}
\Omega=\big\{ z\in\Cset\,:\babs{\mhIm\at{z}}<M \big\},
\end{align*}
where $M> 1+ \frac{1}{ \pi}\max(|\mu_\eps^-|,\,|\mu_\eps^+|)$, and satisfy for $s\in [0,1] $
\begin{align}
    \label{eqn:modkernelsup}
    \|\hat{c}^\pm_\eps(.) (1+ |.|^s) \|_{\fspaceL^\infty(\Omega) }\leq C_s \eps^{1-s}.
\end{align}
\end{proposition}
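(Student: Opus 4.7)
The plan is to treat $\hat{a}_\eps^\pm$ as meromorphic functions on $\Omega$, locate their unique pole there, and define $\hat{b}_\eps^\pm$ as the corresponding principal part, so that $\hat{c}_\eps^\pm := \hat{a}_\eps^\pm - \hat{b}_\eps^\pm$ becomes holomorphic on $\Omega$. The denominator $D_\pm(k,\eps) := 1 + (2\pi\iu k - p^\pm)\sinc^2(\eps\pi k)$ is entire in $(k,\eps^2)$ by the evenness of $\sinc$, and $D_\pm(\cdot,0) = 1 + 2\pi\iu k - p^\pm$ has the unique simple zero $k_\pm^{(0)} = \pm\iu\,\mu_0^\pm/(2\pi)$, where $\mu_0^\pm := \pm(1 - p^\pm) > 0$. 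Since $\partial_k D_\pm(k_\pm^{(0)},0) = 2\pi\iu \neq 0$, the holomorphic implicit function theorem produces a unique analytic branch $k_\pm(\eps)$ of zeros near $k_\pm^{(0)}$, depending smoothly on $\eps^2$. Writing $k_\pm(\eps) = \pm\iu\,\mu_\eps^\pm/(2\pi)$ and substituting $\sinc^2(\eps\pi k) = 1 - (\eps\pi k)^2/3 + O(\eps^4)$ into $D_\pm(k_\pm(\eps),\eps) = 0$, I would solve order by order in $\eps^2$ to obtain \eqref{Lem.AP.ModKernels.E3}. The coefficient $\nu_\eps^\pm$ is then fixed by matching the residue of $\hat{b}_\eps^\pm$ to that of $\hat{a}_\eps^\pm$ at $k_\pm(\eps)$, which gives $\nu_\eps^\pm = 2\pi\iu\,\sinc^2(\eps\pi k_\pm(\eps))/\partial_k D_\pm(k_\pm(\eps),\eps)$; Taylor expansion yields \eqref{Lem.AP.ModKernels.E4}.

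Verifying that $\hat{c}_\eps^\pm$ extends holomorphically to all of $\Omega$ reduces to showing that $D_\pm(\cdot,\eps)$ has no zero in $\Omega$ besides $k_\pm(\eps)$. On the real axis, $\mhIm D_\pm(x,\eps) = 2\pi x\sinc^2(\eps\pi x)$ vanishes only at $x = 0$ or $x = n/\eps$ with $n \in \Zset\setminus\{0\}$, where $\mhRe D_\pm$ equals $1 - p^\pm$ or $1$ respectively, so $D_\pm$ has no real zero. Inside the strip, uniform convergence $D_\pm(\cdot,\eps) \to D_\pm(\cdot,0)$ on compacts together with Rouché's theorem on a small disc around $k_\pm^{(0)}$ preserves the zero count at $1$, while outside this disc but in a bounded rectangle $D_\pm(\cdot,0)$ is bounded below and continuity in $\eps$ extends this. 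For the unbounded part $|\mhRe(k)| \geq L$, $|\mhIm(k)| < M$, the estimate $|\sinc(\eps\pi k)| \leq e^{\eps\pi M}/(\eps\pi|\mhRe(k)|)$, combined with a separate treatment of the transitional range $|\mhRe(k)| \sim 1/\eps$, rules out any further zero.

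For the quantitative bound \eqref{eqn:modkernelsup}, the crucial observation is $\hat{a}_0^\pm = \hat{b}_0^\pm$: indeed $\mu_0^\pm = \pm(1-p^\pm)$ and $\nu_0^\pm = 1$ are exactly the values that turn $\hat{b}_0^\pm$ into the partial fraction of $\hat{a}_0^\pm$. Consequently
\begin{equation*}
\hat{c}_\eps^\pm \;=\; (\hat{a}_\eps^\pm - \hat{a}_0^\pm) \;-\; (\hat{b}_\eps^\pm - \hat{b}_0^\pm),
\end{equation*}
and the first summand simplifies algebraically to $\pm(\sinc^2(\eps\pi k)-1)/[D_\pm(k,\eps)\,D_\pm(k,0)]$. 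This expression is pointwise $O(\eps^2)$ for $|k| \leq 1/\eps$ (using $\sinc^2 - 1 = O(\eps^2 k^2)$) and $O(1/|k|)$ for $|k| \geq 1/\eps$ (using decay of both $\hat{a}_\eps^\pm$ and $\hat{a}_0^\pm$). The second summand is a rational function whose pointwise size is controlled directly by the $O(\eps^2)$ corrections in $\mu_\eps^\pm$ and $\nu_\eps^\pm$. Splitting the supremum over $\Omega$ at $|k| = 1/\eps$ and multiplying by $(1+|k|^s)$ gives $(1+|k|^s)|\hat{c}_\eps^\pm(k)| \leq C_s\max(\eps^{2-s},\eps^{1-s}) = C_s\eps^{1-s}$ for $s \in [0,1]$, by the same interpolation scheme already used in the proof of Proposition \ref{prop:symbol}.

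The main obstacle lies in the uniqueness of the pole in the strip, particularly in the transitional region $|\mhRe(k)| \sim 1/\eps$ where $\sinc^2(\eps\pi k)$ is neither small nor asymptotically close to $1$, so that naive modulus bounds on $(2\pi\iu k - p^\pm)\sinc^2$ are insufficient to exclude spurious zeros of $D_\pm(\cdot,\eps)$; a careful subdivision of $\Omega$ into low-, intermediate- and high-$\mhRe(k)$ regimes is required. Once global holomorphicity is in place, the quantitative estimates follow essentially automatically from the interpolation argument already developed for Proposition \ref{prop:symbol}.
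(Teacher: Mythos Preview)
Your overall strategy is the same as the paper's: locate the unique pole of $\hat{a}_\eps^\pm$ in the strip, define $\hat{b}_\eps^\pm$ as its principal part, and bound the holomorphic remainder $\hat{c}_\eps^\pm$. The paper implements this after the rescaling $z=\eps\pi k$, which turns the bulk region $|k|\lesssim 1/\eps$ into a \emph{fixed} ball $B_{0.9\pi}(0)$; the pole location is then found by Rouch\'e on that ball (your implicit--function approach is an equally valid alternative here), and the delicate transitional region you correctly flag is handled by a three--case analysis of $|zA_\eps^\mu(z)/\eps|$ (Lemma~\ref{lem:A.tailest}).

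There is, however, a genuine gap in your argument for the bound \eqref{eqn:modkernelsup}. You write $\hat{c}_\eps^\pm=(\hat{a}_\eps^\pm-\hat{a}_0^\pm)-(\hat{b}_\eps^\pm-\hat{b}_0^\pm)$ and then claim that the first summand, which you correctly rewrite as $\pm(\sinc^2(\eps\pi k)-1)/\bigl[D_\pm(k,\eps)D_\pm(k,0)\bigr]$, is pointwise $O(\eps^2)$ on $\Omega\cap\{|k|\leq 1/\eps\}$. This fails: both $D_\pm(\cdot,\eps)$ and $D_\pm(\cdot,0)$ vanish at points $k_\pm(\eps),k_\pm^{(0)}\in\Omega$ that are $O(\eps^2)$ apart, so each summand in your decomposition has simple poles inside $\Omega$ and is unbounded there. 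The poles cancel in the difference (that is precisely the holomorphicity of $\hat{c}_\eps^\pm$), but you cannot obtain a uniform sup bound on $\Omega$ by bounding the two pieces separately.

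The paper sidesteps this by estimating $\hat{c}_\eps^\pm=\hat{a}_\eps^\pm-\hat{b}_\eps^\pm$ \emph{directly} via Cauchy's integral formula on the fixed contour $\partial B_{0.9\pi}(0)$ in the rescaled variable (Lemma~\ref{lem:bulkest}). On that contour both $A_\eps^\mu$ and $B_\eps^\mu$ are individually $\eps/(2\iu z)+O(\eps^2)$, so their difference is $O(\eps^2)$ there, and Cauchy transports this bound into the interior --- including across the pole location, where no separate argument is needed. Your decomposition can be repaired by the same device (bound it on a contour avoiding the poles, then invoke Cauchy), but at that point it offers no advantage over working with $\hat{a}_\eps^\pm-\hat{b}_\eps^\pm$ directly.
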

The proof is similar to the proof of Proposition \ref{prop:symbol} and will be also given in section \ref{Sec:Four}.
The monomial term $\hat{b}_\eps^\pm$ in \eqref{Lem.AP.ModKernels.E2} represents a simple and purely imaginary pole of $a_\eps^\pm$ at $k=\mp\mu_\eps^\pm/\at{2\pi\iu}$ while all other complex poles of $a_\eps^\pm$ exhibit a nonvanishing real part as well as an imaginary part with rather large modulus. Moreover, the kernel functions $b_\eps^\pm$ satisfy
\begin{align*}
\pm\,\mu_\eps^\pm b_\eps^\pm\at{x}+\tfrac{\dint}{\dint{x}}b_\eps^\pm\at{x}=\pm\,\nu_\eps^\pm \delta_0\at{x}\,,
\end{align*}
with $\delta_0$ being the standard Dirac distribution, and can hence be regarded as a multiple of the fundamental solution to the differential operator $\pm\mu_\eps^\pm+\tfrac{\dint}{\dint{x}}$. 
Consequently, by substituting the splitting \eqref{Lem.AP.ModKernels.E2} into \eqref{AP.Eqn23}, we obtain that the function $S_\eps$ from \eqref{AP.DefS} satisfies the representation formulas
\begin{align}
\label{AP.ODE1}
-\mu_\eps^- S_\eps+\tfrac{\dint}{\dint{x}}S_\eps=\nu_\eps^-\nat{P_\eps-p^-} S_\eps-F_\eps^-
\end{align}
and
\begin{align}
\label{AP.ODE2}
\mu_\eps^+ S_\eps+\tfrac{\dint}{\dint{x}}S_\eps=\nu_\eps^+\nat{P_\eps-p^+} S_\eps+F_\eps^+,
\end{align}
where
\begin{align}
\label{AP.DefF}
F_\eps^\pm=\at{\pm\mu_\eps^\pm +
\tfrac{\dint}{\dint{x}}}\bigl(c_\eps^\pm\ast
\bat{\nat{P_\eps-p^\pm}\,S_\eps}\,\bigr)
\end{align}
represent nonlocal perturbations. 
Both terms on the right-hand side are small for $x \to +\infty$ in \eqref{AP.ODE1} and for  $x \to -\infty$ in \eqref{AP.ODE2}. 
Therefore based on the leading linear ODE on the left-hand side we can expect the behavior in \eqref{AP.Decay1} and \eqref{AP.Decay2}. 
  This heuristic explanation will be made precise in the next subsection.

\subsection{Improvement of exponential rates}

\begin{lemma}[conditional improvement of exponential decays rates]
\label{Lem.AP.Rates} Let $\theta$ be sufficiently small such that 
$ \frac{\iu ( \pm\mu^\pm + \theta)}{2 \pi}  \in \Omega$ as in Proposition \ref{Lem.AP.ModKernels} 
and $\theta < \min( \eta^-, \eta^+)$. 
Suppose that there exists a constant $D_\eps>0$ as well as positive decay rates $\la_\eps^-$, $\la_\eps^+$ with
\begin{align*}
0<\la_\eps^\pm<\mu_\eps^\pm\,, \qquad \la_\eps^\pm\neq \mu_\eps^\pm- \beta\theta\,, \qquad \la_\eps^\pm\neq \mu_\eps^\pm- \tfrac{\beta}{2}\theta
\end{align*}
such that
\begin{align}
\label{Lem.AP.Rates.E1}
\babs{S_\eps\at{x}}\leq D_\eps\,\min\left\{\exp\at{+\la_\eps^-\,x}\,,\;\exp\at{-\la_\eps^+\,x}\right\}
\end{align}
holds for all $x\in\Rset$. Then we also have
that
\begin{align}
\label{Lem.AP.Rates.E2}
\babs{S_\eps\at{x}}\leq \tilde{D}_\eps\,\min\left\{\exp\at{+\tilde{\la}_\eps^-\,x}\,,\;\exp\at{-\tilde{\la}_\eps^+\,x}\right\}
\end{align}
with
\begin{align}
\label{Lem.AP.Rates.E3}
\tilde{\la}_\eps^\pm=\min\left\{\mu_\eps^\pm\,,\;\la_\eps^\pm+\frac{\beta}{2}\theta\right\}
\end{align}
for some constant $\tilde{D}_\eps$ and  all $x\in\Rset$.
\end{lemma}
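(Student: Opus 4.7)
\emph{Proof proposal.} The plan is to treat the two decay rates independently using the convolution representations \eqref{AP.Eqn23} together with the kernel splittings $a_\eps^\pm = b_\eps^\pm + c_\eps^\pm$ from Proposition \ref{Lem.AP.ModKernels}. I focus on the rate at $+\infty$ via $S_\eps = a_\eps^+ \ast H_\eps^+$ with $H_\eps^+ := (P_\eps - p^+)\,S_\eps$; the estimate at $-\infty$ follows by applying the same argument to $S_\eps = -a_\eps^- \ast ((P_\eps-p^-)\,S_\eps)$ using \eqref{AP.DecayPminus} in place of \eqref{AP.DecayP}. All bounds are carried out in the exponentially weighted spaces $\fspaceL^p_\eta(\Rset) = \{f \colon e^{\eta\cdot} f \in \fspaceL^p(\Rset)\}$ and combined via Young's inequality.

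First I would assemble the relevant weighted estimates for both factors. Using $\theta<\eta^+$ in \eqref{AP.DecayP} gives $|P_\eps(x)-p^+| \leq C\,e^{-\beta\theta\,x}$ for $x\geq 0$ and a uniform bound for $x\leq 0$; combined with the hypothesis \eqref{Lem.AP.Rates.E1}, this yields
\begin{align*}
|H_\eps^+(x)| \leq C\,D_\eps\,\min\bigl\{e^{\la_\eps^- x},\; e^{-(\la_\eps^+ + \beta\theta)\,x}\bigr\}.
\end{align*}
On the kernel side, the pole term $b_\eps^+(x) = \nu_\eps^+\,e^{-\mu_\eps^+ x}\,\mathbf{1}_{x>0}$ satisfies $\|b_\eps^+\|_{\fspaceL^1_\eta}\leq C/(\mu_\eps^+-\eta)$ for $\eta<\mu_\eps^+$ and $\|b_\eps^+\|_{\fspaceL^\infty_{\mu_\eps^+}}\leq \nu_\eps^+$, whereas the smooth remainder $c_\eps^+$ is holomorphic in the wide strip $\Omega$ with $M\gg\mu_\eps^+$ and hence belongs to every $\fspaceL^1_\eta(\Rset)$ with $|\eta|<M$. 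The condition $\iu(\mu_\eps^++\theta)/(2\pi)\in\Omega$ is precisely what legitimises the Fourier contour shifts used to obtain these weighted bounds on $c_\eps^+$.

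The conclusion is then obtained by a case split on $\tilde\la_\eps^+$. If $\la_\eps^+ + \tfrac{\beta}{2}\theta < \mu_\eps^+$, set $\tilde\la_\eps^+ = \la_\eps^+ + \tfrac{\beta}{2}\theta$ and apply Young's inequality in the form $\|a_\eps^+ \ast H_\eps^+\|_{\fspaceL^\infty_{\tilde\la_\eps^+}} \leq \|a_\eps^+\|_{\fspaceL^1_{\tilde\la_\eps^+}}\,\|H_\eps^+\|_{\fspaceL^\infty_{\tilde\la_\eps^+}}$; the avoidance condition $\la_\eps^+ \neq \mu_\eps^+ - \tfrac{\beta}{2}\theta$ keeps $\tilde\la_\eps^+$ strictly below $\mu_\eps^+$, so both weighted norms are finite. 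If instead $\la_\eps^+ + \tfrac{\beta}{2}\theta > \mu_\eps^+$, cap $\tilde\la_\eps^+ = \mu_\eps^+$: now $\|b_\eps^+\|_{\fspaceL^1_{\mu_\eps^+}}$ diverges, but the second avoidance condition $\la_\eps^+ \neq \mu_\eps^+ - \beta\theta$ forces $\la_\eps^+ + \beta\theta > \mu_\eps^+$, so $H_\eps^+ \in \fspaceL^1_{\mu_\eps^+}(\Rset)\cap \fspaceL^\infty_{\mu_\eps^+}(\Rset)$. The pole contribution is then controlled via the dual Young bound $\|b_\eps^+\ast H_\eps^+\|_{\fspaceL^\infty_{\mu_\eps^+}}\leq \|b_\eps^+\|_{\fspaceL^\infty_{\mu_\eps^+}}\,\|H_\eps^+\|_{\fspaceL^1_{\mu_\eps^+}}$, while the $c_\eps^+$-piece is handled exactly as before.

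I expect the crossover at $\tilde\la_\eps^+ = \mu_\eps^+$ to be the main technical point: this borderline is where the dominant pole of $\hat a_\eps^+$ forces a switch between the $\fspaceL^1\!\ast\fspaceL^\infty$ and $\fspaceL^\infty\!\ast\fspaceL^1$ versions of Young's inequality, and the two avoidance hypotheses on $\la_\eps^\pm$ are designed precisely to rule out the resonant configurations in which both weighted norms would degenerate simultaneously (thereby producing polynomial or logarithmic corrections in $x$). Combining the resulting one-sided bounds at $\pm\infty$ yields \eqref{Lem.AP.Rates.E2}--\eqref{Lem.AP.Rates.E3} with a suitable constant $\tilde D_\eps$.
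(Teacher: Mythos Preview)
Your approach is correct and takes a genuinely different route from the paper. The paper does not apply Young's inequality to the convolution form \eqref{AP.Eqn23}; instead it uses the splitting $a_\eps^+=b_\eps^++c_\eps^+$ to rewrite \eqref{AP.Eqn23} as the first-order ODE \eqref{AP.ODE2} (the pole term $b_\eps^+$ becoming the principal part $\mu_\eps^+S_\eps+S_\eps'$, the remainder $c_\eps^+$ entering through the forcing $F_\eps^+$), bounds $F_\eps^+$ in a weighted $\fspaceL^2$ norm via the symbol estimate \eqref{eqn:modkernelsup}, and then applies Duhamel together with Cauchy--Schwarz inside the variation-of-constants integral. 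Your convolution argument is more direct for this lemma and in fact does not require the second avoidance condition $\la_\eps^\pm\neq\mu_\eps^\pm-\beta\theta$: in your Case~2 the inequality $\la_\eps^++\beta\theta>\mu_\eps^+$ is already forced by $\la_\eps^++\tfrac{\beta}{2}\theta>\mu_\eps^+$, so your attribution of its role is slightly off (in the paper's Duhamel argument it is genuinely needed, since the local term $\nu_\eps^+(P_\eps-p^+)S_\eps$ carries the rate $\la_\eps^++\beta\theta$ and resonates with $\mu_\eps^+$ in the integral). Two small points to tighten: the membership $c_\eps^+\in\fspaceL^1_\eta$ does not follow from holomorphicity alone but needs the decay bound \eqref{eqn:modkernelsup} combined with a Paley--Wiener plus Cauchy--Schwarz step as in Corollary~\ref{cor:a}; and the admissible weight range is $|\eta|<2\pi M$, not $|\eta|<M$. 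What the paper's ODE formulation buys is reusability: the same Duhamel representation is exploited in Theorem~\ref{thm:rates} to obtain the matching \emph{lower} bounds on $S_\eps$, where a comparison principle for the ODE is essential and a pure Young-inequality estimate would not give positivity.
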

\begin{proof}
From \eqref{AP.DecayPminus}, \eqref{AP.DecayP} and \eqref{Lem.AP.Rates.E1} we infer that each of the two estimates
\begin{align}\label{eqn:Lem14est1}
\babs{\nat{P_\eps\at{x}-p^\pm}\,S_\eps\at{x}}\leq C\,D_\eps\,\exp\at{\mp\nat{\la_\eps^\pm+\beta \theta}x}
\end{align}
holds for all $x\in\Rset$. For the error term $F_\eps^\pm$ it is most convenient to use weighted $\fspaceL^2$ estimates and to obtain these via the Fourier transform, here we first obtain some bounds thanks to  \eqref{eqn:modkernelsup} for $s=1$, we will show later that the terms are actually small in $\eps$. 
\begin{align} \nonumber
&\| F_\eps^\pm\at{. }\exp\at{\pm\nat{\la_\eps^\pm+ \frac{\beta}{2} \theta} .} \|_{\fspaceL^2(\Rset)}\\  \nonumber =&
\left( \int_\Rset  \left( \at{\pm\mu_\eps^\pm +
\tfrac{\dint}{\dint{x}}}\bigl(c_\eps^\pm\ast
\bat{\nat{P_\eps-p^\pm}\,S_\eps}\,\bigr) (x) \right)^2 \exp\at{\pm 2 \nat{\la_\eps^\pm+ \tfrac{\beta}{2} \theta} x}  \dint{x} \right) ^{1/2}
\\ \nonumber
=&
\left( \int_{\Rset+ \tfrac{\pm \iu }{2 \pi}  (\la_\eps^\pm+ \tfrac{\beta}{2} \theta)}  \left| \at{\pm\mu_\eps^\pm + \iu 2 \pi  k} \hat{c}_\eps^\pm(k) 
\widehat{\nat{P_\eps-p^\pm}\,S_\eps} (k)\, \right|^2  \dint{k} \right) ^{1/2} \\ \nonumber
\leq & C \|\hat{c}^\pm_\eps(.) (1+ |.|) \|_{\fspaceL^\infty(\Omega) }^{1/2}  
\left( \int_{\Rset+ \tfrac{\pm \iu}{2 \pi}  (\la_\eps^\pm+ \tfrac{\beta}{2} \theta)}  \left|
\widehat{\nat{P_\eps-p^\pm}\,S_\eps} (k)\, \right|^2  \dint{k} 
\right) ^{1/2}\\  \nonumber\leq &
\,C\, C_1 \left( \int_\Rset  \left( 
\bat{\nat{P_\eps-p^\pm}\,S_\eps}\,(x) \right)^2 \exp\at{\pm2 \nat{\la_\eps^\pm+ \tfrac{\beta}{2} \theta} x}  \dint{x} \right) ^{1/2}
\\
\leq & \,C\, C_1 D_\eps \left( \int_\Rset  \left( 
\bat{P_\eps-p^\pm} (x) \right)^2 \exp\at{\pm 2 \nat{ \tfrac{\beta}{2} \theta} x}  \dint{x} \right) ^{1/2}. \label{eqn:Fbounde}
\end{align}
We note for the $F^+_\eps$ case using \eqref{AP.DecayP} and  H\"older's inequality  with exponents $\tfrac{1}{\beta}$ and $\tfrac{1}{1-\beta}$ after splitting the integral as well as using the non-optimal exponential decay of $R_\eps$ from Section \ref{subsec:fronts}
\begin{align*}
   & \int_\Rset  \left( 
\bat{P_\eps-p^+} (x) \right)^2 \exp\at{ 2 \nat{ \tfrac{\beta}{2} \theta} x}  \dint{x}\leq \int_\Rset  \left( 
C |R_\eps|^\beta (x) \right)^2 \exp\at{ 2 \nat{ \tfrac{\beta}{2} \theta} x}  \dint{x}
\\=& \int_{-\infty}^0   \left( 
C |R_\eps|^\beta (x)  \right)^2 \exp\at{ 2 \nat{ \tfrac{\beta}{2} \theta} x}  \dint{x}+ \int_0^\infty \left( C
 |R_\eps|^\beta (x) \right)^2 \exp\at{ 2 \nat{ \tfrac{\beta}{2} \theta} x}  \dint{x}\\
\leq & C^2  \|R_\eps\|^{2\beta}_{\fspaceL^\infty(\Rset)} 
\int_{-\infty}^0 \exp\at{ 2   \nat{ \tfrac{\beta}{2} \theta} x}  \dint{x}\\
&\quad +  C^2
\left( \int_0^\infty 
 |R_\eps|^2(x) \exp\at{ 2  \theta x} \dint{x} \right)^{\beta} \left( \int_0^\infty   \exp\at{ - 2 \tfrac{1}{1-\beta} \nat{ \tfrac{\beta}{2} \theta} x} \dint{x} \right)^{1-\beta} \\
 \leq &\tilde{C}_\eps\Bigl(\frac{1}{\beta \theta} (\|R_0\|_{\fspaceL^\infty(\Rset)} + \|R_0-R_\eps\|_{{\fspaceL^\infty(\Rset)}})  +\left(\frac{1-\beta}{\beta \theta}\right) ^{1-\beta}  \left( \int_0^\infty 
 \exp\at{ -2 \eta_+ x}  \exp\at{ 2  \theta  x} \dint{x} \right)^{\beta} \Bigr),  
\end{align*}
which is bounded by \eqref{H1est} as already proved in Section \ref{subsec:fronts}. We  deal  similarly with the $F^-_\eps$ case. Altogether we there exists a $C>0$ such that
\begin{align}\label{eqn:Fepsest}
    \| F_\eps^\pm\at{. }\exp\at{\pm\nat{\la_\eps^\pm+ \frac{\beta}{2} \theta} .} \|_{\fspaceL^2(\Rset)} \leq C D_\eps.
\end{align}
With the ODE representation in \eqref{AP.ODE2} and 
%
%
the Duhamel principle 
\begin{align*}
    &S_\eps(x)\\ =&S_\eps(0) \exp\bat{-\mu_\eps^+\,x} +
 \exp\bat{-\mu_\eps^+\,x}\int_0^x   \exp\bat{\mu_\eps^+\,y} (\nu_\eps^+\nat{P_\eps-p^+} S_\eps(y) +F^+_\eps(y))
    \dint{y} 
\end{align*}
yields with \eqref{eqn:Lem14est1}  the estimate 
\begin{align*}
    |S_\eps(x)| &\leq|S_\eps(0)| \exp\bat{-\mu_\eps^+\,x} \\&\quad+
 \exp\bat{-\mu_\eps^+\,x}  \int_0^x   \exp\bat{\mu_\eps^+\,y} \nu_\eps^+ C\,D_\eps\,\exp\at{-\nat{\la_\eps^+ +\beta \theta}y} \dint{y}  \\
 &\quad  +\exp\bat{-\mu_\eps^+\,x}  \int_0^x  \exp\bat{(\mu_\eps^+- \nat{\la_\eps^+ +\tfrac{\beta}{2} \theta} )\,y}  \exp\at{\nat{\la_\eps^+ +\tfrac{\beta}{2}\theta}y} |F^+_\eps(y)|     \dint{y}.  
\end{align*}
The final term can be estimated using H\"older's inequality and \eqref{eqn:Fepsest}
\begin{align*}
    &\int_0^x  \exp\bat{(\mu_\eps^+- \nat{\la_\eps^+ +\tfrac{\beta}{2} \theta} )\,y}  \exp\at{\nat{\la_\eps^+ +\tfrac{\beta}{2} \theta}y} |F^+_\eps(y)|     \dint{y}\\
    \leq&  \left( \int_0^x  \exp\bat{2(\mu_\eps^+- \nat{\la_\eps^+ +\tfrac{\beta}{2} \theta} )\,y}  \dint{y}\right)^{1/2} \left(\int_0^x   \exp\at{2 \nat{\la_\eps^+ +\tfrac{\beta}{2} \theta}y} |F^+_\eps(y)|^2     \dint{y}\right)^{1/2}\\
    \leq &\left(\frac{\exp\bat{2(\mu_\eps^+- \nat{\la_\eps^+ +\tfrac{\beta}{2}\theta} )\,x} +1}{2\babs{\mu_\eps^+-\at{\la_\eps^++\tfrac{\beta}{2} \theta}}}\right)^{1/2} CD_\eps \\ \leq &\frac{1}{\babs{\mu_\eps^+-\at{\la_\eps^++\tfrac{\beta}{2} \theta}}^{1/2}} (\exp\bat{(\mu_\eps^+- \nat{\la_\eps^+ +\tfrac{\beta}{2} \theta} )\,x} +1) CD_\eps. 
\end{align*}
Summarizing the previous estimates gives for $S_\eps$ in the interval $\oointerval{0}{\infty}$:
\begin{align*}
\babs{S_\eps\at{x}}\leq& \at{\babs{S_\eps\at{0}}+
\frac{\nu_\eps^+\,D_\eps}{\;\babs{\mu_\eps^+-\at{\la_\eps^++\beta \theta}}\;}
+ \frac{CD_\eps }{\babs{\mu_\eps^+-\at{\la_\eps^++\tfrac{\beta}{2} \theta}}^{1/2}}
}\,\exp\bat{-\mu_\eps^+\,x}\;\\&+\left(\frac{\nu_\eps^+\,D_\eps}{\;\babs{\mu_\eps^+-\at{\la_\eps^++\beta \theta}}\;}
+ \frac{CD_\eps }{\babs{\mu_\eps^+-\at{\la_\eps^++\tfrac{\beta}{2}\theta}}^{1/2}}\right)\,\exp\bat{-\at{\la_\eps^++\tfrac{\beta}{2} \theta}\,x}
\end{align*}
for all $x>0$ and applying similar arguments to \eqref{AP.ODE1} we justify
\begin{align*}
\babs{S_\eps\at{x}}\leq& \at{\babs{S_\eps\at{0}}+\frac{\nu^-_\eps D_\eps}{\;\babs{\mu_\eps^--\at{\la_\eps^-+\beta \theta}}}\;+\frac{C D_\eps }{\;\babs{\mu_\eps^--\at{\la_\eps^-+\tfrac{\beta}{2} \theta}}^{1/2}\;}}\,\exp\bat{+\mu_\eps^-\,x}\;\\&+ \at{\frac{\nu^-_\eps D_\eps}{\;\babs{\mu_\eps^--\at{\la_\eps^-+\beta \theta}}}\;+\frac{C D_\eps }{\;\babs{\mu_\eps^--\at{\la_\eps^-+\tfrac{\beta}{2} \theta}}^{1/2}\;}}\,\,\exp\bat{+\at{\la_\eps^-+\tfrac{\beta}{2}\theta}\,x}
\end{align*}
for all $x<0$. The claims \eqref{Lem.AP.Rates.E2} and \eqref{Lem.AP.Rates.E3} now follow immediately provided that $\tilde{D}_\eps$ is chosen appropriately.
\end{proof}
\begin{corollary}[optimal decay rates]
\label{Cor.AP.Decay}
For all sufficiently small $\eps>0$, we have
\begin{align*}
\babs{S\at{x}}\leq D_\eps\,\min\left\{\exp\at{+\mu_\eps^-\,x}\,,\;\exp\at{-\mu_\eps^+\,x}\right\}
\end{align*}
for some constant $D_\eps$.
\end{corollary}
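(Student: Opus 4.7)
The plan is to apply the conditional improvement Lemma \ref{Lem.AP.Rates} a finite number of times, starting from an initial crude pointwise exponential decay of $S_\eps$ and pushing the rates up to the optimal values $\mu_\eps^\pm$. Since each application raises both rates by at least $\frac{\beta}{2}\theta$ until the ceiling $\mu_\eps^\pm$ is reached (by the minimum in \eqref{Lem.AP.Rates.E3}), finitely many steps suffice.

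For the seed estimate, I would use Theorem \ref{existthm}, which provides $R_\eps - R_0 \in \fspaceH^1_{-\eta^-}(\Rset) \cap \fspaceH^1_{\eta^+}(\Rset)$, together with the fact that $R_0$ itself decays exponentially at both ends by the scalar ODE \eqref{eqn:R0}. Consequently $S_\eps = -R_\eps'$ lies in $\fspaceL^2_{-\la_\eps^-}(\Rset) \cap \fspaceL^2_{\la_\eps^+}(\Rset)$ for some initial rates $0 < \la_\eps^\pm < \Breve{\eta}^\pm$. Setting $\tilde{S}_\eps(x) = \exp(\pm\la_\eps^\pm x)\, S_\eps(x)$ and $\tilde{a}_\eps(x) = \exp(\pm\la_\eps^\pm x)\, a_\eps(x)$ in the fixed point relation \eqref{AP.Eqn1} gives $\tilde{S}_\eps = \tilde{a}_\eps \ast (P_\eps \tilde{S}_\eps)$, and Cauchy--Schwarz combined with the weighted $\fspaceL^2$ bound \eqref{eqn:L2exp} for $\tilde{a}_\eps$ yields the pointwise inequality
\begin{align*}
\|\tilde{S}_\eps\|_{\fspaceL^\infty} \leq \|\tilde{a}_\eps\|_{\fspaceL^2}\, \|P_\eps\|_{\fspaceL^\infty}\, \|\tilde{S}_\eps\|_{\fspaceL^2}.
\end{align*}
This verifies the hypothesis \eqref{Lem.AP.Rates.E1} with a constant $D_\eps$ that stays bounded uniformly for small $\eps$.

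For the iteration I would fix $\theta \in (0, \min(\eta^-, \eta^+))$ small enough for Lemma \ref{Lem.AP.Rates} to apply. Writing $\la_\eps^{\pm, n+1} = \min\{\mu_\eps^\pm,\, \la_\eps^{\pm, n} + \tfrac{\beta}{2}\theta\}$, the recursion saturates at $\mu_\eps^\pm$ after at most $N := \lceil 2 \max(\mu_\eps^-, \mu_\eps^+)/(\beta\theta) \rceil$ steps, a number that is finite and can be chosen independent of $\eps$ for all $\eps$ small. The non-resonance conditions excluding $\la_\eps^{\pm,n} = \mu_\eps^\pm - \beta\theta$ and $\la_\eps^{\pm,n} = \mu_\eps^\pm - \tfrac{\beta}{2}\theta$ cut off only finitely many values of $\theta$ along the iteration, so a generic small $\theta$ (perturbed slightly if needed) avoids them all. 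Composing the bounds across the $N$ iterations yields the claimed estimate with a final constant $D_\eps$ that aggregates the growth factors from each improvement step.

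The main obstacle is the seed step: producing an initial pointwise exponential decay with a positive rate whose dependence on $\eps$ is controlled, so that the hypothesis of Lemma \ref{Lem.AP.Rates} can be entered. The weighted $\fspaceH^1$ bound from the implicit function argument only directly controls $\fspaceL^2$ norms of $S_\eps$; the Fourier-analytic bound \eqref{eqn:L2exp} on the weighted kernel $\tilde{a}_\eps$ together with the fixed point identity is what converts this into the $\fspaceL^\infty$ decay needed to launch the bootstrap. Once past this point, the iteration is essentially a finite induction.
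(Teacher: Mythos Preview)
Your proposal is correct and follows essentially the same approach as the paper: start from a nonoptimal pointwise exponential decay for $S_\eps$ and iterate Lemma~\ref{Lem.AP.Rates} finitely many times, choosing $\theta$ generically to avoid the two excluded values at each step. The paper's own proof is terser---it simply asserts that the existence argument yields initial rates $\eta^\pm<\mu_\eps^\pm$ and that the choice $2(\mu_\eps^\pm-\eta^\pm)/(\beta\theta)\notin\Nset$ avoids all resonances along the sequence $\la_\eps^\pm=\eta^\pm+j\tfrac{\beta}{2}\theta$---whereas you supply the additional detail of how to pass from the weighted $\fspaceL^2$ control on $S_\eps$ (coming from $R_\eps-R_0\in\fspaceH^1_{\eta_+}\cap\fspaceH^1_{-\eta_-}$ and the exponential decay of $R_0'$) to a pointwise bound via the identity $\tilde{S}_\eps=\tilde{a}_\eps\ast(P_\eps\tilde{S}_\eps)$ and Cauchy--Schwarz with \eqref{eqn:L2exp}; this is a useful clarification of a step the paper leaves implicit.
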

\begin{proof}
Our nonlinear existence result in the proof so far  implies the existence of nonoptimal but positive decay rates $-\eta^-$ and $\eta^+$ and without loss of generality from \eqref{Lem.AP.ModKernels.E3} we can assume both $\eta^\pm<\mu_\eps^\pm$ and choose $\theta$ such that $2\at{\mu_\eps^\pm-\eta^\pm}/(\beta\theta)\notin\Nset$. The claim now follows by applying Lemma \ref{Lem.AP.Rates} inductively
with $\lambda_\eps^\pm=\eta^\pm+ j \tfrac{\beta}{2} \theta$ with $j \in \Nset$.
\end{proof}
Before we can show that the decay rates in Corollary \ref{Cor.AP.Decay} we need to improve the estimate  \eqref{eqn:Fepsest}.
\begin{lemma}\label{lem:Fepssmall}
There exists $\delta>0$ and constants $\ol{D}_\eps^\pm$ such that 
\begin{align}\label{eqn:lemFepssmall}
\int_0^\infty\babs{F_\eps^\pm\at{y}}^2  \exp\bat{\pm 2\at{\mu_\eps^\pm +\delta}\,y} \dint{y} \leq \ol{D}_\eps^\pm  \eps^{\beta/4}\, 
\end{align}
\end{lemma}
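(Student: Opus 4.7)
The strategy is to upgrade the argument behind \eqref{eqn:Fepsest} in the proof of Lemma \ref{Lem.AP.Rates}, exploiting two new ingredients now available: the \emph{optimal} decay rate $\mu_\eps^\pm$ of $S_\eps$ from Corollary \ref{Cor.AP.Decay}, and the quantitative smallness \eqref{eqn:modkernelsup} of the corrector symbols $\hat{c}_\eps^\pm$ at sub-maximal regularity. First I would set $g_\eps^\pm := (P_\eps - p^\pm)\,S_\eps$ and combine Corollary \ref{Cor.AP.Decay} with the H\"older-decay estimates \eqref{AP.DecayPminus}, \eqref{AP.DecayP} to obtain
$|g_\eps^+(x)| \leq C\,D_\eps\,\exp\bat{-(\mu_\eps^+ + \beta\eta^+)x}$ for $x \geq 0$, and the analogous estimate for $g_\eps^-$ on $x \leq 0$. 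Choosing $\delta>0$ with $\delta<\beta\eta^\pm/2$ and small enough that $\pm \iu(\mu_\eps^\pm + \delta)/(2\pi)\in\Omega$ (in the notation of Proposition \ref{Lem.AP.ModKernels}), the weighted functions $\tilde{g}_\eps^\pm(x) := g_\eps^\pm(x)\exp\at{\pm(\mu_\eps^\pm + \delta)x}$ belong to $\fspaceL^2(\Rset)$ uniformly in $\eps$.

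Next I would apply Plancherel on the shifted contour, which is justified because $\hat{c}_\eps^\pm$ is holomorphic in $\Omega$. Starting from \eqref{AP.DefF} this gives
\begin{align*}
\int_0^\infty |F_\eps^+(y)|^2\,\exp\bat{2(\mu_\eps^+ + \delta)y}\,\dint{y}
= \int_\Rset \bigl|(2\pi\iu k' + \mu_\eps^+)\,\hat{c}_\eps^+(k')\,\widehat{\tilde{g}_\eps^+}(k)\bigr|^2\,\dint{k},
\end{align*}
with $k' := k + \iu(\mu_\eps^+ + \delta)/(2\pi)$, and an analogous identity for $F_\eps^-$. Applying \eqref{eqn:modkernelsup} with $s = 1 - \beta/4$ and combining with the at-most-linear growth of $(2\pi\iu k' + \mu_\eps^+)$ produces the pointwise symbol bound
\begin{align*}
\bigl|(2\pi\iu k' + \mu_\eps^+)\,\hat{c}_\eps^+(k')\bigr| \leq C\,\eps^{\beta/4}\,(1 + |k|^{\beta/4}),
\end{align*}
so that
\begin{align*}
\int_0^\infty |F_\eps^+(y)|^2\,\exp\bat{2(\mu_\eps^+ + \delta)y}\,\dint{y} \leq C\,\eps^{\beta/2}\,\|\tilde{g}_\eps^+\|_{\fspaceH^{\beta/4}(\Rset)}^2,
\end{align*}
with the analogous estimate for $F_\eps^-$ and $\tilde{g}_\eps^-$.

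The remaining and main technical step is to show that $\|\tilde{g}_\eps^\pm\|_{\fspaceH^{\beta/4}(\Rset)}$ blows up in $\eps$ no worse than $\eps^{-\beta/4}$ (a much weaker requirement than uniform boundedness). For this I would invoke the Kato-Ponce inequality, as in the proof of Lemma \ref{lem:FisFrech}, to split the fractional Sobolev norm of the product $\tilde{g}_\eps^\pm$ into contributions from its two factors. The factor $P_\eps - p^\pm = \Phi^{\prime\prime}(R_\eps) - \Phi^{\prime\prime}(r_\pm)$ is uniformly H\"older-$\beta$ continuous in $x$, since $\Phi \in \fspaceC^{2,\beta}$ and $R_\eps$ is uniformly Lipschitz thanks to the non-optimal tail estimates that bound $S_\eps$; the exponentially weighted fractional derivative of this factor is estimated exactly as in the proof of Lemma \ref{lem:FisFrech}. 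The factor $S_\eps$ inherits a corresponding regularity from the integral equation \eqref{AP.Eqn23} through the smoothing properties of $a_\eps^\pm$ encoded in Proposition \ref{prop:weight}. The main difficulty is precisely to carry this bootstrap through in the exponentially weighted, fractional setting without losing more than one factor of $\eps^{-\beta/4}$---this is also why the stated bound in the lemma is $\eps^{\beta/4}$ rather than the formal $\eps^{\beta/2}$ produced by the symbol estimate alone.
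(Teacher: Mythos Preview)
Your approach is essentially the paper's: both use the optimal decay from Corollary~\ref{Cor.AP.Decay}, invoke \eqref{eqn:modkernelsup} with $s=1-\beta/4$, pass through Plancherel on the shifted contour, and control the resulting $\fspaceH^{\beta/4}$ norm of the weighted product via Kato--Ponce. Two points deserve correction. First, your exponent bookkeeping is off: if $\|\tilde g\|_{\fspaceH^{\beta/4}}\leq C\eps^{-\beta/4}$ then the net bound is $\eps^{\beta/2}\cdot\eps^{-\beta/2}=O(1)$, not $\eps^{\beta/4}$. In fact the paper shows $\|\tilde g_\eps^\pm\|_{\fspaceH^{\beta/4}}$ is \emph{uniformly bounded}, so the integral is actually $O(\eps^{\beta/2})$ and the stated $\eps^{\beta/4}$ is just a weaker claim. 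Second, your bootstrap for the regularity of $S_\eps$ via \eqref{AP.Eqn23} and Proposition~\ref{prop:weight} is loose (that proposition is for $a_\eps$, not $a_\eps^\pm$); the paper instead reads the weighted $\fspaceH^1$ bound on $S_\eps$ directly off the ODE \eqref{AP.ODE2}, since $(P_\eps-p^\pm)S_\eps$ and $F_\eps^\pm$ are already in weighted $\fspaceL^2$ by Corollary~\ref{Cor.AP.Decay} and \eqref{eqn:Fepsest}. The paper also makes the Kato--Ponce splitting explicit as $[(P_\eps-p^+)\exp(2\delta\cdot)]\cdot[S_\eps\exp((\mu_\eps^+-\delta)\cdot)]$, which is how the weight is distributed so that each factor is controlled (the first in $\fspaceL^\infty$ and $D^{\beta/4}\fspaceL^\infty$ via \eqref{AP.DecayP} and the H\"older bound on $\Phi^{\prime\prime}$, the second in $\fspaceL^2$ and $\fspaceH^1$ as just described).
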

\begin{proof}
We start as in \eqref{eqn:Fbounde}
with $\la_\eps^\pm= \mu_\eps^\pm$ and add a small extra weight $\delta$ to be chosen later.
We use \eqref{eqn:modkernelsup} with $s=1-\beta/4$
\begin{align} \nonumber
&\| F_\eps^\pm\at{. }\exp\at{\pm\nat{\mu_\eps^\pm+ \delta} .} \|_{\fspaceL^2(\Rset)}
\\ \nonumber
=&
\left( \int_{\Rset+ \tfrac{\pm \iu }{2 \pi}  (\mu_\eps^\pm+ \delta)}  \left| \at{\pm\mu_\eps^\pm + \iu 2 \pi  k} \hat{c}_\eps^\pm(k) 
\widehat{\nat{P_\eps-p^\pm}\,S_\eps} (k)\, \right|^2  \dint{k} \right) ^{1/2} \\ \nonumber
\leq & C \|\hat{c}^\pm_\eps(.) (1+ |.|^{1-\beta/2}) \|_{\fspaceL^\infty(\Omega) } ^{1/2} 
\left( \int_{\Rset+ \tfrac{\pm \iu}{2 \pi}  (\mu_\eps^\pm+ \delta)} (1+ |k|^{\beta/2}) \left|
\widehat{\nat{P_\eps-p^\pm}\,S_\eps} (k)\, \right|^2  \dint{k} 
\right) ^{1/2}\\  \leq &
\,C\, \eps^{\beta/4} \left( \int_\Rset  \left((1+ D^{\beta/4})\left(
\bat{\nat{P_\eps-p^\pm}\,S_\eps}\,(.) \exp\at{\pm \nat{\mu_\eps^\pm+ \delta}(.)} \right)\right)^2    \dint{x} \right) ^{1/2}.
\label{eqn:Fsmall}
\end{align} 
For notational convenience, we give details for the $+$ case, the other case follows in parallel. As in Lemma \ref{lem:FisFrech} we use the 
Kato-Ponce inequality \cite{GO14} 
\begin{align}\nn
   & \|D^{\beta/4}\left(\bat{\nat{P_\eps-p^+}\,S_\eps}\,(.) \exp\at{ \nat{\mu_\eps^++ \delta}(.)} \right)\|_{\fspaceL^2}\\ \nn=&
   \|D^{\beta/4}\left(\left[\bat{P_\eps-p^+} \,(.) \exp\at{ 2 \delta(.)} \right] \left[
     S_\eps\,(.) \exp\at{ \nat{\mu_\eps^+- \delta}(.)}  \right]\right)\|_{\fspaceL^2}
   \\
   \leq & C \Bigl(\|D^{\beta/4}\left[\bat{P_\eps-p^+} \,(.) \exp\at{ 2 \delta(.)} \right]\|_{\fspaceL^\infty} \| S_\eps\,(.) \exp\at{ \nat{\mu_\eps^+- \delta}(.)} \|_{\fspaceL^2} \nn \\& \quad
   +\|\bat{P_\eps-p^+} \,(.) \exp\at{ 2 \delta(.)}\|_{\fspaceL^\infty} \|D^{\beta/4}(S_\eps\,(.) \exp\at{ \nat{\mu_\eps^+- \delta}(.)} )\|_{\fspaceL^2} \Bigr). \label{eqn:KP2}
\end{align}
Taking the $\fspaceL^2$ norm of the ODE \eqref{AP.ODE2} and using the bounds on $S_\eps$ in Corollary  \ref{Cor.AP.Decay} together with \eqref{eqn:Fepsest} yield $\fspaceH^1$ bounds on $S_\eps\,(.) \exp\at{ \nat{\mu_\eps^+- \delta}(.)}$ for any small $\delta>0$. Thus both terms involving $S_\eps$ in \eqref{eqn:KP2} are bounded. 
 We already obtained bounds in \eqref{AP.DecayP} 
 on 
 \begin{align}
     \|\bat{P_\eps-p^+} \,(.)  \exp\at{ 2 \delta(.)}\|_{\fspaceL^\infty} \leq C_\delta \mbox{ for } 0< \delta \leq \frac{\beta \eta^+}{2} \label{eqn:Peps+}
 \end{align}
  It remains to estimate the fractional derivative of 
\begin{align*}
   \bat{P_\eps-p^+} \,(.)  \exp\at{ 2 \delta(.)}=
   ( \Phi^{\prime\prime}( R_\eps) -p^+) \,(.)  \exp\at{ 2 \delta(.)}. 
\end{align*}
  We use the H\"older continuity of $\Phi^{\prime\prime}$  and
  the bounds on $R'_\eps =S_\eps$ in Corollary  \ref{Cor.AP.Decay} to get a uniform bound for $0< c \leq \beta \min(\mu_\eps^-, \mu_\eps^+)$ and $|x-y|\leq 1$ of the form  
  \begin{align}\label{eqn:PepsHolder}|\Phi^{\prime\prime}( R_\eps(x))- \Phi^{\prime\prime}( R_\eps(y)) |\leq 
  C_\beta |R_\eps(x)-R_\eps(y)|^\beta \leq C \exp \at{- c |x|} |x-y|^\beta. \end{align} 
  By \cite[Prop. 3.4]{NPV12} we can use an equivalent representation of the fractional derivative: 
 \begin{align*}
     &|D^{\beta/4} [ (\Phi^{\prime\prime}( R_\eps) -p^+) \,(.)  \exp\at{ 2 \delta(.)}](x)|  \\
     \leq & C 
     \int_\Rset \left| \frac{(\Phi^{\prime\prime}( R_\eps) -p^+) \,(x)  \exp\at{ 2 \delta x}- (\Phi^{\prime\prime}( R_\eps) -p^+) \,(y)  \exp\at{ 2 \delta y }}{|x-y|^{1+\beta/2}}\right|\dint{y} \\
      \leq & C \int_{|x-y|\leq 1} \frac{(\Phi^{\prime\prime}( R_\eps) -p^+) \,(x)  \exp\at{ 2 \delta x}- (\Phi^{\prime\prime}( R_\eps) -p^+) \,(y)  \exp\at{ 2 \delta y }}{|x-y|^{1+2s}}\dint{y}  \\ & \quad + C  \int_{|x-y| >  1} \frac{|(\Phi^{\prime\prime}( R_\eps) -p^+) \,(x)  \exp\at{ 2 \delta x }- (\Phi^{\prime\prime}( R_\eps) -p^+) \,(y)  \exp\at{ 2 \delta(y)}|}{|x-y|^{1+\beta/2}}\dint{y} \\
      \leq & C \int_{|x-y|\leq 1} \frac{
     | (\Phi^{\prime\prime}( R_\eps)(x)- \Phi^{\prime\prime}( R_\eps)(y)) \exp\at{ 2 \delta x } | }{|x-y|^{1+\beta/2}}\dint{y} \\ &\quad+ 
C \int_{|x-y|\leq 1} \frac{ |(\Phi^{\prime\prime}( R_\eps) -p^+) \,(x) ( \exp\at{ 2 \delta x}-\exp\at{ 2 \delta y})   | }{|x-y|^{1+\beta/2}}\dint{y}     
     \\ &\quad + C  \int_{|x-y| >  1} \frac{2\|\bat{P_\eps-p^+} \,(.)  \exp\at{ 2 \delta(.)}\|_{\fspaceL^\infty}}{|x-y|^{1+\beta/2}}\dint{y},
 \end{align*}
 which is bounded by  \eqref{eqn:Peps+} for the third integral,
 by \eqref{eqn:Peps+} and  $|\exp\at{ 2 \delta x}-\exp\at{ 2 \delta y}| \leq C |x-y| \exp \at{ 2 \delta x} $ for the second integral and by \eqref{eqn:PepsHolder} for the first integral,  all for some suitable $\delta>0$. 
  This implies that the expression in \eqref{eqn:KP2} is bounded and we obtain \eqref{eqn:lemFepssmall} for the $+$ case, the estimate for the $-$ case uses the same steps.
\end{proof}
\begin{theorem}[exponential upper and lower bounds for $S_\eps$]\label{thm:rates}
There exists two constants $0<\ul{D}<1<\ol{D}<\infty$ such that the pointwise estimates
\begin{align*}
\ul{D}\leq S_\eps\at{x}\, \exp\at{-\mu_\eps^-\,x} \leq \ol{D}\qquad\text{for}\quad x<0
\end{align*}
and
\begin{align*}
\ul{D}\leq S_\eps\at{x}\, \exp\at{+\mu_\eps^+\,x} \leq \ol{D}\qquad\text{for}\quad x>0
\end{align*}
are satisfied for all sufficiently small $\eps$.
\end{theorem}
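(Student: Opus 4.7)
The plan is to establish both the upper and lower bounds in parallel on each half-line by reducing the linear ODEs \eqref{AP.ODE1} and \eqref{AP.ODE2} to pure integrations through a carefully chosen second integrating factor. Two structural facts drive the argument. First, the strict convexity of $\Phi^\prime$ together with the constant extension of $\Phi^{\prime\prime}$ outside $[0,1]$ forces $p^+\leq P_\eps(x)\leq p^-$ on all of $\Rset$, so that $P_\eps-p^+$ is nonnegative and $P_\eps-p^-$ is nonpositive everywhere. Second, the sharp-weight smallness of $F_\eps^\pm$ supplied by Lemma \ref{lem:Fepssmall}, with the strict margin $\delta>0$ beyond $\mu_\eps^\pm$, is what lets the forcing term be absorbed uniformly in $\eps$.

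For $x>0$ I set $G_\eps^+(x):=S_\eps(x)\,\mhexp{\mu_\eps^+ x}$, so that \eqref{AP.ODE2} becomes $(G_\eps^+)^\prime=\nu_\eps^+(P_\eps-p^+)G_\eps^++\mhexp{\mu_\eps^+ x}F_\eps^+$. Introducing the monotone weight $W_\eps^+(x):=\exp\!\bigl(\int_0^x\nu_\eps^+(P_\eps-p^+)\dint{y}\bigr)$, which by \eqref{AP.DecayP} and convexity obeys $1\leq W_\eps^+(x)\leq W_\infty^+<\infty$ uniformly in $\eps$ and $x\geq 0$, the quotient $H_\eps^+:=G_\eps^+/W_\eps^+$ satisfies $(H_\eps^+)^\prime=(W_\eps^+)^{-1}\mhexp{\mu_\eps^+ x}F_\eps^+$, and integration from $0$ yields
\[
H_\eps^+(x)=S_\eps(0)+\int_0^x(W_\eps^+)^{-1}(z)\,\mhexp{\mu_\eps^+ z}\,F_\eps^+(z)\dint{z}.
\]
Splitting $\mhexp{\mu_\eps^+ z}=\mhexp{-\delta z}\,\mhexp{(\mu_\eps^++\delta)z}$ and applying Cauchy--Schwarz with Lemma \ref{lem:Fepssmall} bounds this integral uniformly in $x$ by $C\eps^{\beta/8}$. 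Combined with the convergence $S_\eps(0)\to S_0(0)>0$ from \eqref{AP.NormalS}, this forces $H_\eps^+(x)\in[S_0(0)/2,\,2S_0(0)]$ for all sufficiently small $\eps$ and all $x\geq 0$. Multiplying by the two-sided bound $W_\eps^+(x)\in[1,W_\infty^+]$ then delivers $\underline{D}\leq G_\eps^+(x)\leq\overline{D}$ on $(0,\infty)$, with constants independent of $\eps$.

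The case $x<0$ is parallel: put $G_\eps^-(x):=S_\eps(x)\,\mhexp{-\mu_\eps^- x}$ and $W_\eps^-(x):=\exp\!\bigl(-\int_0^x\nu_\eps^-(P_\eps-p^-)\dint{y}\bigr)$. Since $P_\eps-p^-\leq 0$ and \eqref{AP.DecayPminus} provides uniform integrability on $(-\infty,0)$, the weight $W_\eps^-$ stays in an $\eps$-independent interval $[W_{\min}^-,\,1]\subset(0,1]$ for $x\leq 0$, and the same Duhamel/Cauchy--Schwarz step closes the estimate. The main technical obstacle I anticipate is precisely the necessity of the strictly enlarged weight $\mu_\eps^\pm+\delta$ in Lemma \ref{lem:Fepssmall}, rather than the sharp rate itself: Corollary \ref{Cor.AP.Decay} alone would only give a bound at the rate $\mu_\eps^\pm$, and an $L^1$ control on $\mhexp{\mu_\eps^\pm y}F_\eps^\pm$ as required by the Duhamel identity cannot be extracted from a weighted $L^2$ bound at exactly the same rate. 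The margin $\delta>0$ is exactly what makes Cauchy--Schwarz yield the uniform smallness $\mathcal{O}(\eps^{\beta/8})$ of the forcing, after which the convexity-driven signs of $P_\eps-p^\pm$ allow the upper and lower sharp estimates to be read off simultaneously.
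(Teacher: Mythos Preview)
Your argument is correct and follows essentially the same route as the paper: both integrate the ODEs \eqref{AP.ODE1}--\eqref{AP.ODE2} via Duhamel, bound the integrating factor $\exp\bigl(\nu_\eps^\pm\!\int(P_\eps-p^\pm)\bigr)$ two-sidedly using \eqref{AP.DecayPminus}--\eqref{AP.DecayP} and the sign of $P_\eps-p^\pm$, and control the forcing integral by Cauchy--Schwarz together with the $\delta$-margin estimate of Lemma~\ref{lem:Fepssmall}. The only stylistic difference is that you package the computation through the exact integrating factor $W_\eps^\pm$ and the quotient $H_\eps^\pm$, which delivers the upper and lower bounds (and hence positivity of $S_\eps$) in one stroke, whereas the paper obtains the upper bound by taking absolute values and then deduces the lower bound separately via a comparison argument conditional on $S_\eps\geq 0$; your version is a mild streamlining of the same proof.
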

\begin{proof}
By Corollary \ref{Cor.AP.Decay}, the quantities
\begin{align*}
\ul{D}_\eps^+=\inf_{x>0}\babs{S_\eps\at{x}}\,\exp\at{+\mu_\eps^+\,x}\,,\qquad
\ol{D}_\eps^+=\sup_{x>0}\babs{S_\eps\at{x}}\,\exp\at{+\mu_\eps^+\,x}
\end{align*}
are well-defined real numbers and the representation formula \eqref{AP.ODE2} implies
\begin{align*}
S_\eps\at{x}=&\exp\at{-\mu_\eps^+\,x}\,\exp\at{\nu^+_\eps \int\limits_{0}^x \bat{P_\eps\at{s}-p^+}\dint{s}}\,S_\eps\at{0}+\\&
\int\limits_0^x \exp\bat{-\mu_\eps^+\,\at{x-y}}\,\exp\at{\nu^+_\eps\int\limits_{y}^x \bat{P_\eps\at{s}-p^+}\dint{s}}F_\eps^+\at{y}\dint{y}
\end{align*}
thanks to the Duhamel principle, where the  properties of $P_\eps$ --- due to the assumptions on $\Phi$  and \eqref{AP.DecayP} --- ensure
\begin{align*}
0<\nu^+_\eps\int\limits_{y}^x \bat{P_\eps\at{s}-p^+}\dint{s}\leq C_P\qquad \text{for all}\quad 0<y<x<\infty
\end{align*}
for some constant $C_P$ independent of $\eps$.
Combining these with the results of Corollary \ref{Cor.AP.Decay} and Lemma \ref{lem:Fepssmall} we obtain
\begin{align*}
\babs{S_\eps\at{x}}\leq \exp\at{-\mu_\eps^+\,x}\,\exp\at{C_P}\at{
S_\eps\at{0}+\eps^{\beta/4}\,\ol{D}_\eps^+\,C_0\left(\int\limits_0^x\exp\at{-2 \delta\,y}\dint{y}\right)^{1/2}}
\end{align*}
and hence
\begin{align*}
\babs{S_\eps\at{x}}\,\exp\at{+\mu_\eps^+\,x}\leq \exp\at{C_P}\,\at{
S_\eps\at{0}+\frac{\;\eps^{\beta/4}\,\ol{D}_\eps^+\,C_0}{\sqrt{2\delta}}\;}\,.
\end{align*}
Taking the supremum over $x>0$ on the left hand side and rearranging terms reveals that
\begin{align*}
\ol{D}_\eps^+\leq \frac{\exp\at{C_P}}{\;\;\D1-\frac{\;\eps^{\beta/4}\,C_0\,\exp\at{C_P}\;}{\sqrt{2\delta}}\;\;}\,S_\eps\at{0}\leq 2\,\exp\at{C_P}\,S_\eps\at{0}
\end{align*}
holds for all sufficiently small $\eps$. On the other hand, as long as $S_\eps\at{y}\geq 0$ holds for all $y\in\oointerval{0}{x}$, the estimate
\begin{align*}
\mu_\eps^+\,S_\eps\at{y}+\tfrac{\dint}{\dint{x}}S_\eps\at{y}\geq-\abs{F^+_\eps\at{y}}
\end{align*}
holds due to  \eqref{AP.ODE2}.
The comparison principle for ODEs, the Duhamel principle and H\"older's inequality yield
\begin{align*}
&S_\eps\at{x} \geq \exp\at{-\mu_\eps^+\,x}S_\eps\at{0}
-  \exp\at{-\mu_\eps^+\,x} \int_0^x   \exp\at{\mu_\eps^+\,y} |F^+_\eps(y)| \dint{y}\\
\geq& \exp\at{-\mu_\eps^+\,x}S_\eps\at{0}\\ &
-  \exp\at{-\mu_\eps^+\,x} \left(
\int_0^x\babs{F_\eps^+\at{y}}^2  \exp\bat{2\at{\mu_\eps^++\delta}\,y} \dint{y} \right)^{1/2} \left(\int\limits_0^x\exp\at{-2\delta\,y}\dint{y}\right)^{1/2} \\ \geq &
\exp\at{-\mu_\eps^+\,x}\at{S_\eps\at{0}
-\frac{\;\eps^{\beta/4}\,\ol{D}_\eps^+\,C_0\;}{\sqrt{2\delta}}}
\end{align*}
in this case and for $\eps$ sufficiently small
\begin{align*}
S_\eps\at{x}>0\quad\text{ for all}\quad x>0\,,\qquad \quad
\ul{D}_\eps^+\geq \tfrac12\, S_\eps\at{0}\,.
\end{align*}
Moreover, repeating all arguments for $x<0$ we derive
\begin{align*}
\ol{D}_\eps^-\geq \tfrac12\, S_\eps\at{0}
\,,\qquad \quad
S_\eps\at{x}>0\quad\text{ for all}\quad x<0\,,\qquad \quad
\ul{D}_\eps^-\leq \tfrac12\,\exp\at{C_P}\, S_\eps\at{0}
\end{align*}
so the claim follows in view of \eqref{AP.NormalS}.
\end{proof}

\begin{proof}[Proof of the remaining parts of Theorem \ref{existthm}] 
The existence part of Theorem \ref{existthm} was given at the end of section \ref{sec:IFT}.  Monotonic decay follows from the positivity of $S_\eps= R_\eps'$ as provided by the lower bounds in Theorem \ref{thm:rates}. Integrating $S_\eps$ from $\pm \infty $ yields the decay behavior.
\end{proof}

\section{Proofs of estimates on Fourier symbols}
\label{Sec:Four}

To study the Fourier symbols $\hat{a}_\eps$, $\hat{a}^+_\eps$ and $\hat{a}^-_\eps$ jointly, we consider 
\[ \hat{a}^\mu_\eps\at{k}= \frac{\sinc^2(\eps \pi k)}{1- \mu \sinc^2(\eps \pi k) + 2 \pi \iu k \sinc^2(\eps \pi k),}\]
where $\mu \in \{0,p_-,p_+ \}$. We change variable $z := \eps \pi k$ and rewrite
\begin{align*}
    A^\mu_\eps(z) =& \frac {\eps \sin^2(z)}{D^\mu_\eps(z)} \mbox { where}\\
    & D^\mu_\eps(z)= \eps z^2 - \eps \mu \sin^2(z) + 2 \iu z \sin^2(z).
    \end{align*} 
We collect some basic properties of $D^\mu_\eps$
\begin{lemma}\label{lem:pole}
    There exists $\eps_0>0$ such that for $0<\eps<\eps_0$ $D^\mu_\eps$ has a double zero at $0$ and unique simple zero $z_\eps$
 in the complex ball $B_{ 0.9 \pi}(0)$. Furthermore, the zero $z_\eps$ satisfies 
 \begin{align}
     \label{eqn:zeps} z_\eps=& \frac{\iu \eps}{2}(1-\mu) +  O\at{\eps^2},\\
     (D^\mu_\eps)'(z_\eps)=&-\frac{\iu \eps^2 }{2} (1-\mu)^2+ O\at{\eps^3}, \\
     \frac{\eps \sin^2(z_\eps)}{ (D^\mu_\eps)'(z_\eps)}=&\frac{\eps}{2 \iu}+  O\at{\eps^2}. \label{eqn:pole3}
 \end{align}
 \end{lemma}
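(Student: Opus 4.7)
The plan is to factor out the obvious double zero at the origin and reduce the problem to locating a simple zero of a regularised function. Since $\sin^2(z)=z^2-z^4/3+O(z^6)$ is entire with a zero of exact order $2$ at the origin, the function
\[
H^\mu_\eps(z):=\frac{D^\mu_\eps(z)}{z^2}=\eps-\eps\mu\,\frac{\sin^2 z}{z^2}+2\iu\,\frac{\sin^2 z}{z}
\]
extends to an entire function, and clearly $D^\mu_\eps(z)=z^2H^\mu_\eps(z)$ has a double zero at $z=0$. Moreover, any other zero of $D^\mu_\eps$ in $B_{0.9\pi}(0)$ is a zero of $H^\mu_\eps$ with the same order. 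Using the Taylor expansion, I would record
\[
H^\mu_\eps(z)=\eps(1-\mu)+2\iu z+\frac{\eps\mu}{3}z^2-\frac{2\iu}{3}z^3+O\bigl(\eps z^4+z^5\bigr),
\]
valid uniformly on $B_{0.9\pi}(0)$.

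Next I would establish existence and uniqueness of $z_\eps$ via a compactness and Rouché argument. At $\eps=0$, $H^\mu_0(z)=2\iu\sin^2(z)/z$ vanishes on $B_{0.9\pi}(0)$ only at $z=0$, so there exists $\delta>0$ such that $|H^\mu_0(z)|\geq\delta$ on the annulus $\{r_0\leq|z|\leq 0.9\pi\}$ for some small $r_0>0$. By continuity of $(\eps,z)\mapsto H^\mu_\eps(z)$ and compactness, $|H^\mu_\eps(z)|\geq\delta/2$ on this annulus for all sufficiently small $\eps$, so no zero of $H^\mu_\eps$ lies there. On the small disk $B_{r_0}(0)$, I would write $H^\mu_\eps(z)=\bigl(\eps(1-\mu)+2\iu z\bigr)+E^\mu_\eps(z)$ with $E^\mu_\eps(z)=O(\eps z^2+z^3)$ and apply Rouché's theorem on a circle of radius $c\eps$ for some fixed $c>|1-\mu|/2$: the linear part has exactly one zero at $z=\iu\eps(1-\mu)/2$ inside this circle, and dominates the remainder for small $\eps$. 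This yields a unique simple zero $z_\eps\in B_{0.9\pi}(0)$ of $H^\mu_\eps$, and hence of $D^\mu_\eps$.

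Finally, for the asymptotic expansions I would bootstrap. Writing $z_\eps=\iu\eps(1-\mu)/2+w$ with $w=O(\eps^2)$ and inserting into $H^\mu_\eps(z_\eps)=0$ directly gives \eqref{eqn:zeps}. Differentiating the factorisation, $(D^\mu_\eps)'(z)=2zH^\mu_\eps(z)+z^2(H^\mu_\eps)'(z)$, and using $H^\mu_\eps(z_\eps)=0$ yields
\[
(D^\mu_\eps)'(z_\eps)=z_\eps^2\,(H^\mu_\eps)'(z_\eps).
\]
From the expansion $(H^\mu_\eps)'(z)=2\iu+O(\eps z+z^2)$ and $z_\eps^2=-\eps^2(1-\mu)^2/4+O(\eps^3)$, the second formula follows. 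For the ratio in \eqref{eqn:pole3}, substitute $\sin^2(z_\eps)=z_\eps^2+O(z_\eps^4)$ so that $\eps\sin^2(z_\eps)=-\eps^3(1-\mu)^2/4+O(\eps^4)$, divide by $(D^\mu_\eps)'(z_\eps)=-\iu\eps^2(1-\mu)^2/2+O(\eps^3)$, and simplify.

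No single step is genuinely difficult; the main technical care lies in choosing the radii $r_0$ and $c\eps$ in the Rouché argument so that the remainder $E^\mu_\eps$ is dominated by the linear part $\eps(1-\mu)+2\iu z$. Once this is done, all asymptotic formulas follow from the factorisation $D^\mu_\eps=z^2H^\mu_\eps$ and straightforward Taylor expansion.
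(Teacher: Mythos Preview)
Your approach is essentially the same as the paper's: both argue by Rouch\'e's theorem to count zeros and then read off the asymptotics by Taylor expansion. The paper applies Rouch\'e directly to $D^\mu_\eps$ on $\partial B_{0.9\pi}(0)$, comparing with $D^\mu_0(z)=2\iu z\sin^2 z$ (a triple zero at the origin and nothing else), and then observes that the double zero at $0$ persists so the third zero bifurcates. Your explicit factorisation $D^\mu_\eps=z^2H^\mu_\eps$ is the natural way to make that persistence step concrete, and the derivative identity $(D^\mu_\eps)'(z_\eps)=z_\eps^2\,(H^\mu_\eps)'(z_\eps)$ is a clean route to the second formula.

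One small gap to close: your uniqueness argument covers the annulus $r_0\leq|z|\leq 0.9\pi$ (by compactness) and the disk $|z|\leq c\eps$ (by Rouch\'e against the linear part), but as written it says nothing about $c\eps\leq|z|\leq r_0$. The fix is immediate: also apply Rouch\'e on the circle $|z|=r_0$, comparing $H^\mu_\eps$ with $H^\mu_0$ (the perturbation is $O(\eps)$ while $|H^\mu_0|\geq\delta$ there), which gives exactly one zero of $H^\mu_\eps$ in $B_{r_0}(0)$; combined with your Rouch\'e on $|z|=c\eps$ this forces the unique zero to lie inside $B_{c\eps}(0)$.
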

\begin{proof}
For $\eps=0$, $D^\mu_\eps$ has a triple zero at $z=0$ and no further zeros in $B_{ 0.9 \pi}(0)$. Using Rouch\'e's theorem and as the double zero at $0$ persists, the additional zero bifurcates by standard bifurcation theory. The formulas follow by Taylor expansion.    
\end{proof}
Then  the leading pole of $A^\mu_\eps$ is given by 
\begin{align}
    \label{eqn:Beps} B^\mu_\eps(z)=  \frac{\eps \sin^2(z_\eps)}{ (D^\mu_\eps)'(z_\eps) (z-z_\eps)} 
\end{align}
and the corresponding  function in $k$ is denoted by
\begin{align}  \label{eqn:beps}
    \hat{b}_\eps^\mu(k)=  \frac{\sin^2(z_\eps)}{(D^\mu_\eps)'(z_\eps) \bigl( \pi  k- \tfrac{z_\eps}{\eps}\bigr) }
\end{align}
Then $B^\mu_\eps$ is a good approximation of $A^\mu_\eps$ on $B_{ 0.8 \pi}(0)$.
\begin{lemma}\label{lem:bulkest}
 There exists $\eps_0>0$ such that for all $0<\eps<\eps_0$ the holomorphic function   $G^\mu_\eps(z):= A^\mu_\eps (z)- B^\mu_\eps(z)$ satisfies
  \[\sup_{ z \in B_{ 0.8 \pi}(0)} | G^\mu_\eps(z)| \leq C \eps^2. \]
\end{lemma}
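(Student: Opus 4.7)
The plan is to observe that $G^\mu_\eps$ is in fact holomorphic on a disk slightly larger than $B_{0.8\pi}\at{0}$ and then to apply the maximum modulus principle together with a direct expansion on a suitably chosen circle.

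First I would verify that $G^\mu_\eps$ is holomorphic on $B_{0.9\pi}\at{0}$. In this disk the only zero of $\sin$ is at the origin, and there the factor $\sin^2\at{z}$ in the numerator of $A^\mu_\eps$ exactly cancels the double zero of $D^\mu_\eps$ identified in Lemma \ref{lem:pole}; together with the simple zero at $z_\eps$ this shows that $A^\mu_\eps$ is meromorphic on $B_{0.9\pi}\at{0}$ with only a simple pole at $z_\eps$, whose residue equals $\eps\sin^2\at{z_\eps}/\at{D^\mu_\eps}'\at{z_\eps}$. By construction $B^\mu_\eps$ has the same principal part at $z_\eps$ and no other singularities, so $G^\mu_\eps$ is holomorphic on all of $B_{0.9\pi}\at{0}$.

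The maximum modulus principle then reduces the problem to bounding $|G^\mu_\eps\at{z}|$ on a single circle $|z|=r$ with $r\in(0.8\pi,0.9\pi)$, say $r=0.85\pi$. On this circle both $|\sin\at{z}|$ and $|z|$ are bounded above and below by positive constants that do not depend on $\eps$ or $\mu$. The key is to show that both $A^\mu_\eps$ and $B^\mu_\eps$ agree with $\eps/\at{2\iu z}$ up to errors of size $\eps^2$, so that the leading contributions cancel in $G^\mu_\eps$. For $A^\mu_\eps$ I would factor $D^\mu_\eps\at{z}=2\iu z\sin^2\at{z}\,\bat{1+\eps E\at{z}}$ with $E\at{z}:=\at{z^2-\mu\sin^2\at{z}}/\bat{2\iu z\sin^2\at{z}}$ bounded uniformly on the circle, and then expand the geometric series to obtain $A^\mu_\eps\at{z}=\eps/\at{2\iu z}+O\at{\eps^2}$. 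For $B^\mu_\eps$, formula \eqref{eqn:pole3} gives $\eps\sin^2\at{z_\eps}/\at{D^\mu_\eps}'\at{z_\eps}=\eps/\at{2\iu}+O\at{\eps^2}$, while $z_\eps=O\at{\eps}$ from \eqref{eqn:zeps} yields $1/\at{z-z_\eps}=1/z+O\at{\eps}$ uniformly for $|z|=r$; multiplying these two expansions gives $B^\mu_\eps\at{z}=\eps/\at{2\iu z}+O\at{\eps^2}$ as well, and the subtraction finishes the estimate on the circle.

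The main obstacle is purely bookkeeping: one has to check that all implicit constants in the $O\at{\cdot}$ estimates can be chosen uniformly in $\mu\in\{0,p^+,p^-\}$ and in $z$ on the chosen circle. Both requirements follow from the compactness of the annulus $\{0.8\pi\leq|z|\leq 0.9\pi\}$, on which $\sin$ is bounded below in modulus, together with the finiteness of the parameter set for $\mu$ and the explicit $\eps$-expansions recorded in Lemma \ref{lem:pole}.
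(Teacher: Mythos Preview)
Your argument is correct and follows essentially the same route as the paper: establish holomorphy of $G^\mu_\eps$ on $B_{0.9\pi}\at{0}$, expand both $A^\mu_\eps$ and $B^\mu_\eps$ as $\eps/\at{2\iu z}+O\at{\eps^2}$ on a circle strictly between radii $0.8\pi$ and $0.9\pi$, and then transfer the boundary bound inward. The only cosmetic difference is that you invoke the maximum modulus principle where the paper uses Cauchy's integral formula, which is an equally valid (and arguably more direct) way to pass from the circle to the disk.
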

\begin{proof}
The function $ A^\mu_\eps $ is meromorphic on $B_{ 0.9 \pi}(0)$ by Lemma \ref{lem:pole} with a simple pole at $z_\eps$, hence $G^\mu_\eps$ is holomorphic by construction. The estimate follows by   Cauchy's integral formula on $\partial B_{ 0.9 \pi}(0)$. On this contour $z \sin^2(z)$ is uniformly bounded away from $0$. Then  on  $\partial B_{ 0.9 \pi}(0)$
\begin{align*}
A^\mu_\eps(z)= \frac{\eps \sin^2(z)}{\eps z^2 - \eps \mu \sin^2(z) + 2 \iu z \sin^2(z)}= \eps \frac{ sin^2(z)}{ 2 \iu z \sin^2(z)(1+ O\at{\eps} ) }
= \frac{\eps}{2 \iu z} + O\at{\eps^2}, \end{align*}
while by \eqref{eqn:pole3} 
\begin{align*}
    B^\mu_\eps(z)=  \frac{\eps}{2 \iu (z-z_\eps)} + O\at{\eps^2}= \frac{\eps}{2 \iu z} + O\at{\eps^2} \mbox{ on } \partial B_{ 0.9 \pi}(0),
\end{align*}
such that  
\begin{align*}
    A^\mu_\eps(z)- B^\mu_\eps(z)=  O\at{\eps^2} \mbox{ on } \partial B_{ 0.9 \pi}(0).
\end{align*}
Then finally using Cauchy's integral formula
\begin{align*}
    G^\mu_\eps(z)= \frac{1}{2 \pi \iu}\int_{\partial B_{ 0.9 \pi}(z_\eps)}  \frac{A^\mu_\eps(\xi)- B^\mu_\eps(\xi)}{\xi-z } \dint{\xi} =\frac{1}{2 \pi \iu}\int_{\partial B_{ 0.9 \pi}(z_\eps)}  \frac{O\at{\eps^2}}{\xi-z } \dint{\xi} \leq C \at{\eps^2}
\end{align*}
as required. 
\end{proof}
Changing back to $k$ coordinates, Lemma \ref{lem:bulkest} implies an estimate on a ball 
with radius proportional to $\eps^{-1}$
\begin{align} \label{eqn:bulkkest}
    \sup_{k = \frac{z}{\eps \pi} \in B_{0.8/ \eps}(0) } | \hat{a}^\mu_\eps(k)-  \hat{b}_\eps^\mu(k)| \leq C \eps^2.
\end{align}
We control the behavior outside the ball by showing  decay in $k$.
\begin{lemma}\label{lem:A.tailest} Let  $\eps$ be sufficiently small and let $M>0$ be fixed. Then for all $z \in \Cset$ with $|\Re(z)|\geq 0.8 \pi$ and $| \Im (z)|\leq M \pi  \eps$ the estimate
\[ \left| A^\mu_\eps(z) \right| \leq \frac{C \eps}{|z|}\]
  holds.
\end{lemma}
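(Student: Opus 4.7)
The plan is to bound the numerator and denominator of $A^\mu_\eps(z)$ separately, exploiting the observation that the two leading contributions $\eps z^2$ and $2\iu z\sin^2(z)$ of $D^\mu_\eps(z)$ are almost orthogonal in the complex plane when $z$ is nearly real. First I verify that the numerator is uniformly bounded: for $|\mhIm(z)|\leq M\pi\eps$ the identity $|\sin(z)|^2 = \sin^2(\mhRe z)\cosh^2(\mhIm z) + \cos^2(\mhRe z)\sinh^2(\mhIm z)$ yields $|\sin^2(z)| \leq \cosh^2(M\pi\eps_0) \leq C$, so $|\eps\sin^2(z)| \leq C\eps$ uniformly on the strip.

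For the denominator the key step is to establish the lower bound
\begin{align*}
|D^\mu_\eps(z)|^2 \geq c\bat{\eps^2|z|^4 + |z|^2|\sin^2(z)|^2}
\end{align*}
for some $c>0$ independent of $\eps\in(0,\eps_0]$, $\mu\in\{0,p^-,p^+\}$, and $z$ in the stated region. Writing $z = x + \iu y$ and $\sin^2(z) = p + \iu q$ with $p = \sin^2(x) + O(y^2)$ and $q = \sin(2x)\sinh(y)\cosh(y) = O(y)$, a direct computation gives
\begin{align*}
\mhRe D^\mu_\eps &= \eps\bat{x^2 - \mu\sin^2(x)} + O\bat{\eps|x|}, \\
\mhIm D^\mu_\eps &= 2x\sin^2(x) + O\bat{\eps^2|x|},
\end{align*}
with all remainders produced by the $|y|\leq M\pi\eps$ contributions. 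Since $|x|\geq 0.8\pi$ and $\mu$ is bounded, one has $x^2-\mu\sin^2(x)\geq x^2/2$ once $\eps_0$ is small enough to absorb the $O(\eps|x|)$ error and $|x|$ exceeds a threshold $K_0$ depending on $\mu$ and $M$, giving $|\mhRe D^\mu_\eps|\geq \eps x^2/4$; simultaneously $|\mhIm D^\mu_\eps|\geq 2|x|\sin^2(x)\bat{1-O(\eps^2)}$. Squaring and summing, and using $|z|\sim|x|$ together with $|\sin^2(z)|=\sin^2(x)+O(\eps^2)$, yields the claimed inequality on the region $|x|\geq K_0$. On the compact transitional range $|x|\in[0.8\pi,K_0]$ the same bound follows from continuity in $\eps$ and the fact that the $\eps\to0$ leading term $2\iu z\sin^2(z)$ vanishes only at $z=n\pi$, where the subleading $\eps z^2$ term, of size $\eps n^2\pi^2$, takes over.

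With the denominator bound in hand the conclusion is immediate:
\begin{align*}
|A^\mu_\eps(z)|^2 \leq \frac{\eps^2|\sin^2(z)|^2}{c\bat{\eps^2|z|^4 + |z|^2|\sin^2(z)|^2}} \leq \frac{\eps^2}{c|z|^2},
\end{align*}
the last inequality being a one-line two-case argument comparing $|\sin^2(z)|^2$ with $\eps^2|z|^2$. Taking square roots yields the claim $|A^\mu_\eps(z)|\leq C\eps/|z|$.

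I expect the main obstacle to lie in the uniform tracking of the perturbation estimates in the complex strip: ensuring that the $O(\eps|x|)$ error in $\mhRe D^\mu_\eps$ really is absorbed by the leading $\eps x^2$ term for $|x|\geq K_0$, and handling the compact transitional range $|x|\in[0.8\pi,K_0]$ uniformly in $\mu\in\{0,p^-,p^+\}$. Once these constants are chosen consistently, the remaining estimates are routine.
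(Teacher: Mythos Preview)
Your approach rests on the same key observation as the paper---that in the thin strip $|\mhIm z|\leq M\pi\eps$ the contributions $\eps z^2$ and $2\iu z\sin^2 z$ are nearly orthogonal---and your intermediate inequality $|D^\mu_\eps(z)|^2\geq c\bat{\eps^2|z|^4+|z|^2|\sin^2 z|^2}$ is a clean packaging of that idea. The paper, however, organizes the estimate by the \emph{ratio} $|\eps z|/|\sin^2 z|$ (three cases: large, small, comparable) rather than by the size of $|x|$, and that difference matters for closing the argument. A small side remark: in your large-$|x|$ region, writing ``$\eps_0$ small enough to absorb the $O(\eps|x|)$ error'' is misleading, since the error and the leading term $\eps x^2$ carry the same factor $\eps$; the absorption comes solely from $|x|\geq K_0$, with $K_0$ of order $\max(\sqrt{2\mu},\,M)$.

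The genuine gap is in the compact range $|x|\in[0.8\pi,K_0]$. A bare continuity argument does not suffice, because both sides of your claimed lower bound degenerate simultaneously near $z=n\pi$ as $\eps\to0$, and the obvious two-case split there is circular: if $|\sin^2 z|\gtrsim C_1\eps$ you can subtract $|\eps(z^2-\mu\sin^2 z)|\lesssim\eps K_0^2$ from $|2z\sin^2 z|$ only by taking $C_1\gtrsim K_0^2$; but then in the complementary region $|\sin^2 z|\lesssim C_1\eps$ the term $|2z\sin^2 z|\lesssim K_0 C_1\eps\sim K_0^3\eps$ swamps $\eps z^2\sim\eps(0.8\pi)^2$ for large $K_0$, so ``the $\eps z^2$ term takes over'' fails. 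The paper's trichotomy sidesteps this: Cases~1 and~2 handle extreme ratios by naive triangle inequalities with no reference to $|x|$, while Case~3 (comparable sizes) first extracts $|\sin x|\gtrsim\sqrt{\eps}$ from the ratio bound itself, which is exactly what forces $\sin^2 z$ to be predominantly real and makes the orthogonality quantitative. Your compact region can be rescued along similar lines---for instance by noting that the cross term $-2xq$ actually improves from $O(\eps|x|)$ to $O(\eps|x|\,|\sin x|)$, so near $n\pi$ it is small enough for $\eps z^2$ to dominate after all---but this requires a further sub-split, not just compactness.
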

\begin{proof}
    We need to show that the expression
\begin{align} \label{eqn:aim}
    \frac{z A^\mu_\eps  (z)}{\eps}= \frac{\sin^2(z)}{\eps z - \eps \mu \tfrac{\sin^2(z)}{z}+2  \iu \sin^2(z)}
\end{align}    
is bounded for $z \in \Omega=\{ z \in \Cset \mid |\Re(z)|\geq 0.8 \pi, | \Im (z)|\leq M \pi  \eps\} $.  We write $z=x+ \iu y$ with $x,y \in \Rset$ and use $\sin(z)=\sin(x) \cosh (y) + \iu \cos(x) \sinh(y)$.  Then we  observe  for all 
$\eps$ small enough and
$z \in \Omega$
\begin{align}
 |\eps z| &\geq 0.8 \pi\, \eps\label{eqn:tail0}\\
    \left| \eps z- \eps \mu \frac{\sin^2(z)}{z}  \right|  & \leq (1 +|\mu| ) \eps \left|    z \right|\label{eqn:tail1}\\
 \left| \Im( \sin^2(z))\right|^2  &=4\left|  \sin(x) \cosh (y) \cos(x) \sinh(y) \right|^2 \leq C \eps^2  |\sin(x)|^2 \label{eqn:tail2}\\
 \left| \sin^2(z)\right|^2& = \left| \sin^2(x) \cosh^2 (y) - \cos^2(x) \sinh^2(y) \right|^2  +   \left| \Im( \sin^2(z))\right|^2  \nonumber \\
 &\leq  C \left( |\sin(x)|^4+  \eps^4 + \eps^2  |\sin(x)|^2  \right)  \label{eqn:tail3}
\end{align}
We can consider \eqref{eqn:aim} in three cases:
\paragraph{Case 1: $\frac{|\eps z|}{2} \geq 3 |\sin^2(z)|$.} Then in the denominator 
\begin{align*}
    |\eps z - \eps \mu \tfrac{\sin^2(z)}{z}+2  \iu \sin^2(z)|&\geq |\eps z| - \left(2+\frac{\eps \mu}{0.8  \pi}\right)    |\sin^2(z)| \geq \frac{|\eps z|}{2}
\end{align*}
for $\eps$ sufficiently small
and while the numerator can be estimated by the assumption, we obtain 
\begin{align*}
   \left|\frac{z A^\mu_\eps  (z)}{\eps} \right| \leq \frac{\tfrac{1}{6} | \eps z|}{\tfrac{1}{2 }|\eps z|} = \frac{1}{3}
\end{align*}
as required.
\paragraph{Case 2: $   |\sin^2(z)| \geq  (1 +|\mu|) |\eps z| $.} Then with  \eqref{eqn:tail1}
\begin{align*}
    |\eps z - \eps \mu \tfrac{\sin^2(z)}{z}+2  \iu \sin^2(z)|&\geq 2  |\sin^2(z)| 
    - (1 +|\mu|) |\eps z|  
    \geq    |\sin^2(z)|
\end{align*}
and 
\begin{align*}
   \left|\frac{z A^\mu_\eps  (z)}{\eps} \right| \leq 1
\end{align*}
as required.
\paragraph{Case 3: $  \frac{1}{1+|\mu|}  \leq  \frac{|\eps z| }{ |\sin^2(z)| }   \leq 6 $.}
The remaining intermediate case requires a more detailed argument, which uses the idea that idea both $z$ and $\sin(z)$ are mainly real in this case. We first obtain with \eqref{eqn:tail0} that 
\[ |\sin^2(z)|^2 \geq C \eps^2.  \]
Using \eqref{eqn:tail3} then implies
\[ |\sin(x)| \geq \tilde{C} \sqrt{\eps},\]
such that for $\eps$ sufficiently small
\begin{eqnarray}
\nonumber
\frac{\left|\Re (\sin^2(z))\right|}{ \left| \Im( \sin^2(z))\right|}&=&
\frac{\left|  \sin^2(x) \cosh ^2(y) -\cos^2(x) \sinh^2(y) \right| }{2 \left|  \sin(x) \cosh (y) \cos(x) \sinh(y) \right|}\\
\nonumber
&=&
\frac{\left|  \sin^2(x) \cosh ^2(y) + O\at{\eps^2} \right| }{2 \left|  \sin(x) \cosh (y) \cos(x) \sinh(y) \right|}\\
\nonumber
&\geq &
\frac{\left|  \sin(x) \right| \left| \cosh(y)\right| }{4 \left|  \cos(x) \sinh(y) \right|}\\
\label{eqn:cas31}
&\geq &\frac{\tilde{C}}{4M  \pi\sqrt{\eps}}\gg 1
\end{eqnarray}
using $|\tanh{y}|\leq |y| \leq M  \pi{\eps}$.
Furthermore due to the form of $\Omega$ we have
\begin{align}\label{eqn:cas32}|\Re (\eps z) | \geq 0.8  \pi\eps \gg M \pi \eps^2  \geq | \Im (\eps z)|.\end{align} 
As  $\eps z$ and $\sin^2(z)$ are comparable in this case, we also have 
\begin{align}
   \label{eqn:cas33}|\Re (\eps z) |&\gg  | \Im (\sin^2(z))|\\
   \label{eqn:cas34}|\Re (\sin^2(z)) |& \gg | \Im (\eps z)|.
\end{align}
Then \eqref{eqn:cas31},\eqref{eqn:cas32}, \eqref{eqn:cas33}  and  \eqref{eqn:cas34} together imply for $\eps$ sufficiently small 
\begin{align*}
    &|\eps z - \eps \mu \tfrac{\sin^2(z)}{z}+2  \iu \sin^2(z)|\\
    = &\sqrt{|\Re (\eps z-\eps \mu \tfrac{\sin^2(z)}{z}  ) -2 \Im (\sin^2(z)) |^2 + 
    |2\Re (\sin^2(z))  + \Im  (\eps z-\eps \mu \tfrac{\sin^2(z)}{z}  )|^2 } \\
    \geq&  \frac{1}{2}  \sqrt{|\Re(\eps z)|^2 +4 |\Re(\sin^2(z))|^2 } \geq  |\Re(\sin^2(z))|  \\
        \geq&  \frac{1}{2}  |\sin^2(z)|, 
\end{align*}
such that finally 
\begin{align*}
   \left|\frac{z A^\mu_\eps  (z)}{\eps} \right| \leq \frac{|\sin^2(z)|}{\tfrac{1}{2} |\sin^2(z)|} = 2.
\end{align*}
Combining all three cases concludes the proof. 
\end{proof}

We are now in the position to prove Propositions \ref{prop:symbol} and \ref{Lem.AP.ModKernels}.

\begin{proof}[Proof of Proposition \ref{prop:symbol}]
 The Fourier symbol $\hat{a}_\eps$ is a quotient of two entire functions. The poles within the  large ball  $B_{0.8/ \eps}(0) $ have been identified in Lemma \ref{lem:pole} for $\eps$ sufficiently small, such that there are 
 no  poles in $\Rset \times \iu [-\Breve{\eta}_-,\Breve{\eta}_+]  \cap  B_{0.8/ \eps}(0) $, hence $\hat{a}_\eps$ is 
 holomorphic. There are no poles in $\Rset \times \iu [-\Breve{\eta}_-,\Breve{\eta}_+]  \setminus  B_{0.8/ \eps}(0) $ by Lemma \ref{lem:A.tailest} either, showing that $\hat{a}_\eps$ is holomorphic  in the strip $\Rset \times \iu [-\Breve{\eta}_-,\Breve{\eta}_+]$.    

The $\fspaceL^\infty$-bound  \eqref{eqn:fourlinftyest} follows directly by boundedness of $\hat{a}_\eps$ on $\Rset \times \iu [-\Breve{\eta}_-,\Breve{\eta}_+]$ and the decay estimate of Lemma \ref{lem:A.tailest}. 

For the next two estimates we will first establish that for $\hat{b}_\eps$ as in \eqref{eqn:beps}, we 
have  a uniform constant $C$ such that for all $k \in \Rset \times \iu [-\Breve{\eta}_-,\Breve{\eta}_+]$
\begin{align} \label{eqn:ba0}
    |\hat{b}_\eps(k) - \hat{a}_0 (k)| \leq  C \eps \frac{1}{1 +|k|}.
\end{align}
For $k$ in any fixed ball $B$ around $0$, we get from \eqref{eqn:bulkkest}, that $\hat{b}_\eps$ is close to 
$\hat{a}_\eps$, while $\hat{a}_0$  is close to  $\hat{a}_\eps$  uniformly on  $B$ by Taylor expanding $\sinc^2(\eps \pi k)=1 + O\at{\eps^2} $, hence \eqref{eqn:ba0} holds on the unit ball.
While a direct calculation yields for $k$ outside the unit ball and $\eps$ sufficiently small
\begin{align*}
    |\hat{b}_\eps(k) - \hat{a}_0 (k)| &=
     \frac{\eps \sin^2(z_\eps)}{(2 \eps z_\eps+  2 \iu \sin^2(z_\eps) + 2 \iu z_\eps \sin(z_\eps) \cos(z_\eps)) ( \pi k -(z_\eps/ \eps)) }  - \frac{1}{1+2 \pi \iu k} \\
&=    \frac{\pi k (2 \eps z_\eps + 4 \iu z_\eps \sin(z_\eps) \cos(z_\eps)) + O\at{\eps^2} }{(2 \eps z_\eps+  2 \iu \sin^2(z_\eps) + 2 \iu z_\eps \sin(z_\eps) \cos(z_\eps)) ( \pi k -(z_\eps/ \eps) (1+2 \pi \iu k)}.
\end{align*}
We observe the following  uniform estimates in $k$  
\begin{align*}
    &(2 \eps z_\eps + 4 \iu z_\eps \sin(z_\eps) \cos(z_\eps))\in O\at{\eps^3},\\
    &|D'_\eps (z_\eps)| = | (2 \eps z_\eps+  2 \iu \sin^2(z_\eps) + 2 \iu z_\eps \sin(z_\eps) \cos(z_\eps))| \geq C \eps^2,\\
    &(z_\eps/ \eps) = \iu/2 + O\at{\eps},
\end{align*}
which together yield \eqref{eqn:ba0} outside the unit ball too.

Then \eqref{eqn:fourerror} restricted to $k \in \Rset \times \iu [-\Breve{\eta}_-,\Breve{\eta}_+]  \cap  B_{0.8/ \eps}(0) $ follows from Lemma \ref{lem:bulkest} and the previous estimate \eqref{eqn:ba0}. The estimate in the tail follows from Lemma \ref{lem:A.tailest} and using $|k| >0.8/\eps$
\begin{align*}
    |\hat{a}_\eps(k) - \hat{a}_0 (k)| \leq |\hat{a}_\eps(k)| +  |\hat{a}_0 (k)| \leq \frac{C}{|k|} = C \eps. 
\end{align*}

The interpolated weight estimate \eqref{eqn:fourerror2} follows 
on
$\Rset \times \iu [-\Breve{\eta}_-,\Breve{\eta}_+]  \cap  B_{0.8/ \eps}(0) $ from \eqref{eqn:fourerror}: The maximal weight is $C/\eps^{1-s}$
which leads to an extra factor $C \eps^{s-1}$, which gives together with the $\eps$ in  \eqref{eqn:fourerror} the required estimate on $\Rset \times \iu [-\Breve{\eta}_-,\Breve{\eta}_+]  \cap  B_{0.8/ \eps}(0) $.  While on  $\Rset \times \iu [-\Breve{\eta}_-,\Breve{\eta}_+]  \setminus B_{0.8/ \eps}(0) $ 
we use \eqref{eqn:fourlinftyest} where we also gain a factor $|k|^{-s} \leq C \eps^s$.

 Finally, the $\fspaceL^2$-estimate \eqref{eqn:fourL2exp} follows directly by boundedness of $\hat{a}_\eps$ and the decay estimate in Lemma \ref{lem:A.tailest}.
\end{proof}

\begin{proof}[Proof of Proposition \ref{Lem.AP.ModKernels}]  The arguments follow in parallel to the above proof of Proposition \ref{prop:symbol}. The expansion of the coefficients $\mu_\eps^\pm=\mp \frac{2i z_\eps^\pm}{\eps}$ and $\nu_\eps^\pm = \frac{2 \iu \sin^2(z^\pm_\eps)}{(D^\pm_\eps)'(z_\eps)}$ are obtained using Lemma \ref{lem:pole}. By construction $\hat{a}^\pm_\eps$ and $\hat{b}^\pm_\eps$
have the same poles in $B_{0.8/ \eps}(0)$. Both have  no further poles in $\Omega$ such that   
$\hat{c}^\pm_\eps=\hat{a}^\pm_\eps-\hat{b}^\pm_\eps$ is holomorphic on $\Omega$. 

We are setting $z= \pi k \eps$. The bulk estimate of \eqref{eqn:modkernelsup} for $\Omega \cap B_{0.8/ \eps}(0)$ follows from Lemma  \ref{lem:bulkest} for $\mu= p_\pm$ uniformly for all $s\in[0,1]$ as the maximal weight is  $C/\eps$. 
On  $\Omega \setminus B_{0.8/ \eps}(0) $, 
we again set $z= \pi k \eps$ such that \ref{lem:A.tailest} yields a bound $C/|k|$.  Then for a weight $1+|.|^s$ for fixed $s\in[0,1]$  we  gain a factor $|k|^{1-s} \leq C \eps^{1-s}$ as required.
\end{proof}

\bibliographystyle{alpha}
\bibliography{lit_paper.bib}

\end{document}